\DeclareMathOperator*{\argmin}{arg\,min}
\theoremstyle{plain}
\newtheorem{thm}{Theorem}
\newtheorem{lem}{Lemma}
\newtheorem{prop}[thm]{Proposition}
\newtheorem{cor}{Corollary}
\newtheorem{assu}{Assumption}
\newcommand{\bX}{\bm{X}}
\newcommand{\bx}{\bm{x}}
\newcommand{\bY}{\bm{Y}}
\newcommand{\bZ}{\bm{Z}}
\newcommand{\bW}{\bm{W}}
\newcommand{\balpha}{\bm{\alpha}}
\newcommand{\ZZ}{\mathcal{Z}}
\newcommand{\RR}{\mathbb{R}}
\newcommand{\EE}{\mathbb{E}}
\newcommand{\QQ}{\mathbb{Q}}
\newcommand{\PP}{\mathbb{P}}
\newcommand{\MCA}{\mathcal{A}}
\newcommand{\ud}{\,\mathrm{d}}
\newcommand{\mc}[1]{\mathcal{#1}}
\newcommand{\half}{\frac{1}{2}}
\newcommand{\cN}{\mathcal{N}}
\newcommand{\miniH}{\check {\bm H}}
\newcommand{\pinorm}{\|\pi\|}
\newcommand{\eps}{\epsilon}
\theoremstyle{definition}
\newtheorem{defn}{Definition}[section]
\theoremstyle{remark}
\newtheorem{rem}{Remark}
\newcommand{\jh}[1]{\textcolor{red}{\textsf{[JH: #1]}}}
\newcommand{\bR}{\mathbb {R}}
\newcommand{\transpose}{^{\operatorname{T}}}
\newcommand{\rmd}{\,\mathrm{d}}
\title{Convergence of Deep Fictitious Play for Stochastic Differential Games}
\author{Jiequn Han\thanks{Department of Mathematics, Princeton University, Princeton, NJ 08544-1000, USA, \em{jiequnh@princeton.com}.} \and Ruimeng Hu\thanks{Department of Mathematics, and Department of Statistics and Applied Probability, University of California, Santa Barbara, CA 93106-3080, {\em rhu@ucsb.edu}. RH was partially supported by the NSF grant DMS-1953035.} \and  Jihao Long\thanks{The Program in Applied and Computational Mathematics, Princeton University, Princeton, NJ 08544-1000, \em{jihaol@princeton.edu}.}}
\date{\today}
\begin{document}

\maketitle

\begin{abstract}

Stochastic differential games have been used extensively to model agents' competitions in Finance, for instance, 
in P2P lending platforms from the Fintech industry, the banking system for systemic risk, and insurance markets. The recently proposed machine learning algorithm, deep fictitious play, provides a novel efficient tool for finding Markovian Nash equilibrium of large $N$-player asymmetric stochastic differential games [J. Han and R. Hu,  Mathematical and Scientific Machine Learning Conference, pages 221-245, PMLR, 2020]. By incorporating the idea of fictitious play, the algorithm decouples the game into $N$ sub-optimization problems, and identifies each player's optimal strategy with the deep backward stochastic differential equation (BSDE) method parallelly and repeatedly. In this paper, we prove the convergence of deep fictitious play (DFP) to the true Nash equilibrium. We can also show that the strategy based on DFP forms an $\eps$-Nash equilibrium.
We generalize the algorithm by proposing a new approach to decouple the games, and present numerical results of large population games showing the empirical convergence of the algorithm beyond the technical assumptions in the theorems.

\end{abstract}

\textbf{Keywords:} Deep fictitious play, convergence analysis, stochastic differential games, Markovian Nash equilibrium, backward stochastic differential equations.

\section{Introduction}

Deep neural network has become a popular and powerful tool in scientific computing, for its remarkable performance in approximating high-dimensional functions. Its success has brought natural applications in stochastic differential games, where high-dimensional optimization problems and/or stochastic differential equations are solved to provide the modeling and analysis of tactical interactions among multiple decision-makers in the context of a random dynamical system. These decision-makers, usually referred to as players or agents, can interact in a manner ranging from completely non-cooperative to completely cooperative. The nature of uncertainty makes stochastic differential games appropriate to be used for the study of competitions in Finance, {\it e.g.}, in P2P lending platforms \cite{WeLi:17,LiQiWaLi:19} from the Fintech industry and insurance markets \cite{Ze:10,BeSiYaYa:14,chYaZe:18}.


For non-cooperative stochastic differential games, a core problem is to compute the associated Nash equilibrium, which refers to a set of strategies so that when applied, no player will profit from unilaterally changing her own choice. When the games involve heterogeneous agents of moderate size, {\it e.g.}, $5 \leq N \leq 100$, computing the Nash equilibrium becomes numerically challenging since conventional numerical algorithms lose their efficiency for $N$ beyond $5$, and the mean-field framework has not started to work well with $N \leq 100$ asymmetric players.


To address the challenge, the authors have recently proposed the deep fictitious play (DFP) algorithms \cite{HaHu:19}, providing a novel efficient tool for finding Markovian Nash equilibrium of large $N$-player asymmetric stochastic differential games. However, despite the efficient performance in simulation, the algorithm's theoretical foundation is still lacking, which will be the focus of this paper. In addition, we generalize the previous algorithms, and propose a general two-step scheme: The first step aims to recast the game into $N$ sub-problems that will be repeatedly solved. The desired algorithm requires that, after the recast, the sub-problems are decoupled among different players given the previous stage's solutions, and that their solutions converge to the true Nash equilibrium. Specifically, we propose two options for the first step: 
\begin{enumerate}
    \item[I.] Fictitious play. This approach was used in \cite{HaHu:19}, assuming that players are myopic and will choose their best responses against others' previous stage action at every subsequent stage. Therefore each player still faces a nonlinear optimization problem.
    \item[II.] Policy update. This approach calculates the game values using all responses from the previous stage, and the current stage responses are determined as if they are the optimizers of the calculated game values.
\end{enumerate}
The second step of the DFP algorithm aims to solve the sub-problems efficiently and accurately. Remark that, due to the large number of players and the high dimensionality of the controlled state process, each sub-problem may still be high-dimensional after the decoupling step. In \cite{HaHu:19}, the Deep BSDE method was employed for each sub-problem, which presents excellent performance. It relies on the BSDE representations of semi-linear partial differential equations (PDEs) and deep learning approximations after discretizing the BSDE by an Euler scheme. It parametrizes the initial position of the backward process and the adjoint process by DNNs, then simulates both processes in a forward manner, aiming to minimize the discrepancy between the terminal value of the backward process and its network approximation. The analysis for the second step shall focus on this method. Meanwhile, we remark that other deep neural networks (DNNs) based algorithms, such as deep learning backward dynamic programming (DBDP) method \cite{HuPhWa:20} and deep Galerkin method \cite{SiSp:18}, are also promising choices for solving sub-problems.




\medskip
\noindent\textbf{Related literature.} The theoretical study of differential games was initiated by R. Isaacs in the early 1960s \cite{Is:99}. Later on, to better describe read world's uncertainties, noises are added to the state of the system, and stochastic differential games now have been intensively used across many disciplines. Domains of applications include management science ({\it e.g.}, operations management, marketing, finance, systemic risk), economics ({\it e.g.}, industrial organization, environmental and macroeconomics, production of exhaustible resources), social science ({\it e.g.}, networks, crowd behavior, congestion), biology ({\it e.g.}, flocking), and military ({\it e.g.}, cyber-attacks).

Fictitious play is well documented in the economics literature, as a learning process for finding Nash equilibria. It was firstly proposed by \cite{Br:49,Br:51} for normal-form games.
Since then, there have been extensive studies on the convergence of fictitious play or its variation under different setting, for instance, see \cite{MiRo:91,MoSh:96,KrSj:98,HoSa:02,Be:05}. For stochastic differential games, besides \cite{HaHu:19}, the most related work is \cite{Hu2:19}, where fictitious play is used to design numerical algorithms for finding open-loop Nash equilibria. We remark that, the idea of fictitious play is not limited to study the games with a moderate number of heterogeneous players \cite{Hu2:19,HaHu:19}, but has also been applied in mean-field games, {\it e.g.}, see \cite{CaHa:17,BrCa:18,El:19}.

The proposed policy update for the first step of the DFP algorithm closely follows policy iteration (PI) in spirit, which was initially introduced by Howard \cite{Ho:60} for discounted Markovian decision problems (MDP). It consists of two steps: policy evaluation (obtaining the expected reward for a given policy) and policy improvement (updating the policy using the rewards for successor states). PI was later generalized to modified PI in \cite{Pu:94}, and has remained as the method of choice in designing reinforcement learning algorithms, {\it e.g.}, see \cite{Go:04,PoMa:11} and the references therein. 

The literature of using DNNs for learning high-dimensional function is rich, including methods for solving high-dimensional parabolic PDEs and BSDEs ({\it e.g.}, the deep BSDE method \cite{EHaJe:17,HaJeE:18}, the DBDP \cite{HuPhWa:20,germain2020deep}, and many others \cite{SiSp:18,beck2019deep,BeEJe:19,PhWaGe:19,YuXiSu:19,JiPePeZh:20}). It also yields algorithms for solving the Schr\"{o}dinger equation~\cite{HaZhE:19,Pfau2019abinitio,han2020solving}, stochastic control problems~\cite{HaE:16,nakamura2019adaptive}, mean field games~\cite{CaNiJa:19,AnFoLa:20} and nonlinear optimal stopping problems~\cite{Hu:19}.

\medskip
\noindent\textbf{Main contribution.} The contribution of this paper consists of the following: 1. We propose a general two-step scheme that extends the original deep fictitious play algorithm \cite{HaHu:19}, and provide two options for solving the first step. The proposed algorithm can efficiently solve stochastic differential games with heterogeneous agents of large size ({\it e.g.}, $5 \leq N \leq 100$), and the presence of common noise. 2. We provide the theoretical foundation for the proposed algorithms. In specific, with small time duration, we prove that the solutions to the decoupled sub-problems, if solved repeatedly and exactly at each stage, converge to the true Nash equilibrium; that the numerical solutions to each sub-problem tend to be exact as we refine the time step in the Euler scheme; and that the strategy based on numerical solutions forms an $\eps$-Nash equilibrium, after running sufficiently many stages and using sufficiently fine time step. 3. We present numerical results showing empirical convergence even beyond the technical assumptions used in the theorems.

\medskip

The rest of this paper is organized as follows. In Section~\ref{sec_mathformulation}, we give the mathematical formulation of general $N$-player asymmetric stochastic games in continuous time. The algorithms consisting of the decoupling step and sub-problem-solving step via deep learning are detailed
in Section~\ref{sec_algorithm}. Section~\ref{sec_analysis} provides convergence analysis for the proposed algorithms, followed by numerical examples presented in Section~\ref{sec_numerics}. We make conclusive remarks in Section~\ref{sec_conclusion}.


\section{Mathematical Formulation}\label{sec_mathformulation}
Throughout the paper, we shall use the following notations:
\begin{itemize}
    \item A boldface character refers to a collection of objects from all players;
    \item A regular character with a superscript $i$ refers to an objective from player $i$ (no matter a scalar or a vector) or the $i^{th}$ column of a vector;
    \item A boldface character with a superscript $-i$ refers to a collection of objects from all players except $i$;
    \item The state process $\bX_t$ introduced below is a common process to all players, and will always be in boldface.
\end{itemize}


We consider a general $N$-player non-zero-sum stochastic differential games. An $\RR^n$-valued \emph{common} state process $\bX_t$ is controlled by a Markovian strategy/policy\footnote{Hereafter, we shall use \textit{strategy} and \textit{policy} interchangeably.} $\balpha$:
\begin{equation}\label{def_Xt}
\ud \bm X_t^{\bm \alpha} = b(t, \bm X_t^{\bm \alpha}, \bm \alpha(t, \bm X_t^{\bm \alpha})) \ud t + \Sigma(t, \bm X_t^{\bm \alpha}) \ud \bm W_t, \quad \bm X_0 = \bm x_0,
\end{equation}
where $\balpha = (\alpha^1, \ldots, \alpha^N)$ is the collection of all players' $\MCA^i$-valued strategies. 
For simplicity, we assume $\MCA^i = \bR^{d_{\alpha}}$ for $i = 1, 2, \dots, N$. 
If not ({\it e.g.}, some boundedness constraints are put on $\alpha^i$), we can assume there exist Lipschitz mappings $P_\alpha^i$ from $\bR^{d_\alpha}$ to $\mathcal{A}^i$ so that $\mathcal{A}^i = P_\alpha^i(\bR^{d_\alpha})$, and all the statements below hold easily with the help of the Lipschitz mappings.
The drift and diffusion coefficients $b$ and $\Sigma$ are deterministic functions of the common state, $b\colon [0,T] \times \RR^n \times \mc{A}  \hookrightarrow \RR^n$,  $\Sigma\colon [0,T] \times \RR^n \hookrightarrow \RR^{n \times k}$, where $\mathcal{A} =  \otimes_{i=1}^N \mc{A}^i = \bR^{Nd_\alpha}$ is the space for the joint control $\balpha$, and $\bm W$ is a $k$-dimensional standard Brownian motion on a filtered probability space $(\Omega,\mathbb{F},\{\mathcal{F}_t\}_{0 \le t \le T}, \mathbb{P})$.

Player $i$ aims at minimizing her expected total cost:
\begin{equation}\label{def_obj}
    \inf_{\alpha^i \in \mathbb{A}^i}\EE\left[\int_0^T f^i(s, \bm X_s^{\bm \alpha}, \bm \alpha(s,\bm  X_s^{\bm \alpha})) \ud s + g^i(\bm X_T^{\bm \alpha}) \right]
\end{equation}
by choosing $\alpha^i$ among all admissible strategies $\mathbb{A}^i$:
\begin{equation}
    \mathbb{A}^i = \left\{\alpha^i(t, \bx): \text{ Borel measurable function } [0,T] \times \RR^n \hookrightarrow \bR^{d_\alpha}
    \right\},
\end{equation}
where the running cost $f^i\colon [0,T] \times \RR^{n} \times \mathcal{A} \hookrightarrow \RR$ and the terminal cost $g^i\colon \RR^{n} \hookrightarrow \RR$ are deterministic measurable functions.
Obviously, the quantity in \eqref{def_obj} is also affected by other players' strategies $\alpha^j$. To emphasis this dependence, we introduce the notation $J^i_t(\alpha^1, \ldots, \alpha^N)$ for the cost of player $i$ starting at $t$ when players choose their strategies $(\alpha^1, \ldots, \alpha^N)$:
\begin{equation}\label{def_J}
J^i_t(\alpha^1, \ldots, \alpha^N) \equiv J_t^i(\bm \alpha) := \EE\left[\int_t^T f^i(s, \bm X_s^{\bm \alpha}, \bm \alpha(s,\bm  X_s^{\bm \alpha})) \ud s + g^i(\bm X_T^{\bm \alpha}) \right].
\end{equation}

In the following sections, we shall present the algorithms for solving the above game and prove its theoretical convergence. In particular, we are interested in finding a Markovian Nash equilibrium (or the Markovian $\eps$-Nash equilibrium).

\begin{defn}
A Markovian $\eps$-Nash equilibrium is a tuple $\bm \alpha^\eps = (\alpha^{1, \eps}, \ldots, \alpha^{N, \eps}) \in \mathbb{A}$, such that, for non-negative $\eps$,
\begin{equation}
\forall i \in \mc{I}, \text{ and } \alpha^i \in \mathbb{A}^i, \quad J^i_0(\bm \alpha^\eps) - \eps \leq J^i_0(\alpha^{1, \eps}, \ldots, \alpha^{i-1, \eps}, \alpha^i, \alpha^{i+1, \eps}, \ldots, \alpha^{N, \eps}).
\end{equation}
A Markovian Nash equilibrium, denoted by $\balpha^\ast$,  is equivalent to an $\eps$-Nash equilibrium where $\eps = 0$. Here $\mathbb{A} = \otimes_{i=1}^N \mathbb{A}^i$ is the product space of $\mathbb{A}^i$, and $\mc{I} = \{1, 2, \ldots, N\}$ is the set of all players.
\end{defn}

As discussed in \cite{HaHu:19}, the formulation \eqref{def_Xt}--\eqref{def_obj} is less restrictive than the usual case where player $i$ can only control her \emph{private} state. Here, a common state $\bX_t$ is controlled by all agents, which is a common feature in economics literature (see {\it e.g.}, \cite{DoJoVaSo:00,PrSe:04,Va:11}). Therefore, it is important to include it in our framework, although this will increase the coupling and make the problem harder to solve, both theoretically and numerically. Remark that the difficulty still persists in the limiting problem as $N \to \infty$ with indistinguishable players, when allowing $\alpha^i$ entering into others' states. This is called the extended mean-field game and it has attracted certain attention recently ({\it e.g.}, \cite{GoPaVo:14,GoVo:16,CaLe:18}). On the other hand, by choosing $b$ and $\Sigma$ in \eqref{def_Xt} properly, one can reduce the formulation \eqref{def_Xt} to the simpler case where each player controls her private state through $\alpha^i$. For instance, if each player's private state is $d$-dimensional, we can let $n = dN$, $b = (b^1, \ldots, b^\ell, \ldots, b^n)$ with
$b^\ell \equiv b^\ell(t, \bm x, \alpha^i)$ for $\ell = (i-1)d+1, \ldots, id$,
then the problem \eqref{def_Xt}--\eqref{def_obj} is the standard modeling in literature in many disciplines including social science, management science and engineering, with the $i^{th}$ player's $d$-dimensional private state $(X_t^{(i-1)d+1}, \ldots, X_t^{id})$ controlled by $\alpha^i$ only.

In the Markovian setting, the value function of player $i$ reads as:
\begin{equation}
V^i(t, \bm x) = \inf_{\alpha^i \in \mathbb{A}^i} \EE\left[\int_t^T f^i(s, \bm X_s^{\bm \alpha}, \bm \alpha(s,\bm  X_s^{\bm \alpha})) \ud s + g^i(\bm X_T^{\bm \alpha}) \Big \vert \bm X_t^{\balpha} = \bm x\right].
\end{equation}
Then, to compute the Markovian Nash equilibrium, we apply the dynamic programming principle and obtain a system of Hamilton-Jacobi-Bellman (HJB) equations:
\begin{align}
\label{def_HJB}
\begin{dcases}
V_t^i + \inf_{\alpha^i \in \mc{A}^i} \left\{ b(t, \bx, \balpha) \cdot \nabla_{\bx}V^i + f^i(t, \bx, \balpha)\right\}  + \half \text{Tr}(\Sigma\transpose \text{Hess}_{\bx} V^i \Sigma) = 0,\\
V^i(T,\bx) = g^i(\bx), \quad i \in \mc{I},
\end{dcases}
\end{align}
where $V_t^i$, $\nabla_{\bx} V^i,~\text{Hess}_{\bx} V^i$ denote the derivative of $V^i$ with respect to $t$, the gradient and the Hessian of function $V^i$ with respect to $\bm x$, and $\text{Tr}$ denotes the trace of a matrix. Note that the HJB system~\eqref{def_HJB} is coupled, as each minimizer $\alpha^{i, \ast}$ depends on $V^i$ and the function $b$ and $f^i$ in \eqref{def_HJB} depend on all minimizers $\balpha^\ast = (\alpha^{1, \ast}, \ldots, \alpha^{N, \ast})$.

Under appropriate conditions, the solution to \eqref{def_HJB} is related to BSDEs, using nonlinear Feynman-Kac formula (cf. \cite{PaPe:92,ElPeQu:97,PaTa:99}). To ease our notations of the BSDEs, 
we prescribe the following the relation on $b $ and $\Sigma$.

\begin{assu}\label{assumption bsigma}
There exists a measurable function $\phi$: $[0,T]\times \bR^n \times \MCA \to \bR^k$, so that $\Sigma(t,\bm{x}) \phi(t,\bm{x},\bm{\alpha}) = b(t,\bm{x},\bm{\alpha})$ for any $(t,\bm{x},\bm{\alpha}) \in [0,T]\times \bR^n \times \mathcal{A}$.
\end{assu}

Consequently, we can define the Hamiltonian function $\bm H(t, \bx, \balpha, \bm p): [0,T] \times \RR^n \times \mathcal{A} \times \RR^{k \times N} \to \RR^N$ by:
\begin{equation}\label{def_Hamiltonian}
    \bm H = [H^1, \ldots, H^N]\transpose, \quad H^i(t, \bx, \balpha, p^i) = \phi(t, \bx, \balpha) \cdot  p^i + f^i (t, \bx, \balpha),  
\end{equation}
where $p^i$ denotes the $i^{th}$ column of $\bm p$, and thus is an $\RR^{k}$ vector. Using this notation, the HJB system can be rewritten as:
\begin{equation}\label{eq_V}
    V_t^i + \inf_{\alpha^i \in \MCA^i} H^i(t, \bx, \balpha, \Sigma\transpose \nabla_{\bx} V^i) + \half \text{Tr}(\Sigma\transpose \text{Hess}_{\bx} V^i \Sigma) = 0, \quad \forall i \in \mc{I}.
\end{equation}
To better describe the optimal game policies, we define $\bm a(t, \bx, \balpha, \bm p):  [0,T] \times \RR^n \times \mathcal{A} \times \RR^{k \times N} \to \MCA$ by: 
\begin{equation}\label{def_a}
    \bm a = (a^1, \ldots, a^N), \quad  a^i(t, \bx, \balpha^{-i}, p^i) = \argmin_{\alpha^i \in \MCA^i} H^i(t, \bx, (\alpha^i, \balpha^{-i}), p^i),  \;\forall i \in \mc{I}.
\end{equation}
In other words, $a^i$ is the minimizer of the $i^{th}$ Hamiltonian, with an emphasis of the dependence on the $i^{th}$ player's game value $\Sigma\transpose \nabla_{\bx}V^i$ and others' strategies $\balpha^{-i}$. Then, we define a function $\balpha(t, \bx, \bm p)$ as the fixed point of   
\begin{equation}\label{def_alphagameast}
   \balpha = \bm a(t, \bx, \balpha, \bm p).
\end{equation}
Note that, with the above notations $\bm a$ and $\balpha$, we have assumed the minimizer in \eqref{def_a} exists and is unique, and \eqref{def_alphagameast} has a unique fixed point. Later in Assumption~\ref{assumption 1 revise}, we will detail explicit conditions on the model parameters, such that these assumptions are satisfied.

We now state the corresponding BSDE formulation of \eqref{def_HJB}, which is the key component of the algorithm design in Section~\ref{sec_algorithm} and the convergence analysis in Section~\ref{sec_analysis}. Let $(\bX_t, \bm Y_t, \bm Z_t) \in \RR^n \times \RR^N \times \RR^{ k \times N}$ be the solution to the following BSDE:
\begin{align}\label{def_BSDE}
\begin{dcases}
\bX_t =  \bx_0 + \int_0^t \Sigma(s, \bX_s) \ud \bm W_s,  \\
\bm Y_t = \bm g(\bX_T) + \int_t^T \miniH(s, \bX_s, \bm Z_s) \ud s - \int_t^T \bm Z_s\transpose \ud \bm W_s,
\end{dcases}
\end{align}
where $\miniH(t, \bx, \bm p) := \bm H(t, \bx, \balpha(t, \bx, \bm p), \bm p)$ is the minimized Hamiltonian vector, and $\bm g(\bx) \equiv [g^1, \ldots, g^N]\transpose(\bx)$ is the vector form of all terminal costs. Then we have the relation:
\begin{align}
    &\bm Y_t = [Y_t^1, \ldots, Y_t^N]\transpose, \quad Y_t^i = V^i(t, \bX_t), \\
    &\bm Z_t = [Z_t^1, \ldots, Z_t^N],\quad  Z_t^i = \Sigma\transpose(t, \bX_t) \nabla_{\bx} V^i(t, \bX_t),
    \label{eq:FK relation}
\end{align}
and the optimal game policy is expressed by
\begin{equation}
    \balpha^\ast_t = \balpha(t, \bX_t, \bZ_t).
\end{equation}
Using the relation \eqref{eq:FK relation}, we notice $  \balpha^\ast_t= \balpha(t, \bX_t, \Sigma\transpose(t, \bX_t) \nabla_{\bx}\bm V(t, \bX_t))$\footnote{We use $\nabla_{\bx} \bm V$ as an $n \times N$ matrix.}, and sometimes write $\balpha^\ast_t = \balpha^\ast(t, \bX_t)$.

\begin{rem}
Note that the process $\bX_t$ in \eqref{def_BSDE} does not allude to $b = 0$ in the controlled dynamics $\bX_t^{\balpha}$ defined in \eqref{def_Xt}. Indeed, it is an auxiliary forward stochastic process derived from the HJB system (6) using the nonlinear Feyman-Kac formula, which is an object different from the controlled process ${\bX}_t^{\balpha}$ in equation \eqref{def_Xt}. One, of course, has the flexibility to choose a different forward process with nonzero drift:
\begin{equation}
\begin{dcases}
  \tilde \bX_t = \bx_0 + \int_0^t  \Sigma(s, \tilde \bX_s) \mu(s, \tilde \bX_s)\ud s + \int_0^t \Sigma(s, \tilde \bX_s) \ud \bW_s,\\
    \tilde {\bm Y}_t = \bm g(\tilde \bX_T) + \int_t^T \miniH(s, \tilde \bX_s, \tilde {\bm Z}_s) - \tilde{\bm Z}_s\transpose \mu(t, \tilde \bX_s) \ud s - \int_t^T \tilde{\bm Z}_s\transpose \ud \bm W_s,
\end{dcases}
\end{equation}
and to express the solution to \eqref{eq_V} via \eqref{eq:FK relation} with all $(X, Y, Z)$ replaced by $(\tilde X, \tilde Y, \tilde Z)$. This is essentially rewriting equation \eqref{eq_V} to 
\begin{equation}
    V_t^i + \inf_{\alpha^i \in \MCA^i} H^i(t, \bx, \balpha, \Sigma\transpose \nabla_{\bx} V^i) - \mu(t, \bx) \cdot \Sigma\transpose \nabla_{\bx} V^i + \mu(t, \bx) \cdot \Sigma\transpose \nabla_{\bx} V^i + \half \text{Tr}(\Sigma\transpose \text{Hess}_{\bx} V^i \Sigma) = 0, \quad \forall i \in \mc{I},
\end{equation}
and take $\inf_{\alpha^i \in \MCA^i} H^i(t, \bx, \balpha, \Sigma\transpose \nabla_{\bx} V^i) - \mu(t, \bx) \cdot \Sigma\transpose \nabla_{\bx} V^i $ as the driver. Note that due to the coupling in \eqref{eq_V}, $\mu(t,\bx)$ needs to be identical across all $i \in \mc{I}$. Nevertheless, we think the choice in \eqref{def_BSDE} is the most natural one, without additional knowledge of $\miniH$.

In the next section, after we introduce the decoupling step, each sub-problem is also interpreted via a BSDE system (cf. \eqref{eq:FP_BSDE} and \eqref{eq:PU_BSDE}). There, though one can freely choose different $\mu^i$ for each backward process thanks to the decoupling, we still choose the forward process without a drift term as we did in \eqref{def_BSDE} for multiple reasons. We defer this explanation to Remark~\ref{rem_bsdechoice}. 

One final remark is that, with Assumption~\ref{assumption bsigma} one can apply a change of measure and make the controlled dynamics driftless as in \eqref{def_BSDE} under a different measure, which is indeed used in the proof of Theorem~\ref{thm_epsnash}.
\end{rem}

If solving directly, no matter which system (\eqref{def_HJB} or \eqref{def_BSDE}), one will encounter computational difficulties due to the high dimensionality of $\bm X_t$ or the large number of agents. To overcome this, we propose a two-step scheme in Section~\ref{sec_algorithm}, where we generalize the idea in \cite{HaHu:19} and offer two options for the first step. The convergence analysis with appropriate assumptions will be presented in Section~\ref{sec_analysis}.

\section{Algorithm}\label{sec_algorithm}
The two-step scheme for solving Markovian Nash equilibrium works as follows. We first decouple the problem \eqref{def_Xt}--\eqref{def_obj} into $N$ independent  sub-problems, for which we need to solve repeatedly and can solve in a parallel manner. Since each sub-problem may still be high-dimensional, we then solve them using deep neural networks with a reformulation in backward stochastic differential equations (BSDEs). Next, we describe the algorithms for each step in detail. 

\subsection{Step I: Decoupling}\label{sec_step1}
This step aims to decentralize the game, converting it into single-agent problems to be solved repeatedly. The algorithms start with an initial guess of the Nash equilibrium $\balpha^0 = [\alpha^{1,0}, \ldots, \alpha^{N,0}]$
and produce a sequence of strategies afterward, which we denote by $\balpha^1, \ldots \balpha^m, \ldots$.  
The following two options at this step differ in how the sequence is determined. Notationwise, $\balpha^m$ refers to the collection of all players' policies at stage $m$, and its $i^{th}$ component $\alpha^{i,m}$ refers to player $i$'s choice.

\begin{enumerate}
    \item \textbf{Fictitious Play.} 
    In this option of Step I, at each stage, each player faces an optimization problem \eqref{def_obj} while assuming that others are using their strategies from the previous stage as fixed strategies. In other words, at stage $m+1$, $\balpha^m$ is known to all players, and player $i$'s decision problem is
\begin{equation}\label{def_decoupled_obj}
\inf_{\alpha^i \in \mathbb{A}^i} J_0^i(\alpha^i; \balpha^{-i,m}),
\end{equation}
where $J_0^i$ is defined in \eqref{def_J}, and the state process $\bX_t$ follows \eqref{def_Xt} with $\balpha$ being replaced by $(\alpha^i; \balpha^{-i,m})$. Here $\balpha^{-i, m}$ represents the strategies of all players but player $i$ at stage $m$, and $(\alpha^i; \balpha^{-i,m})$ is a short notation of $(\alpha^{1, m}, \ldots, \alpha^{i-1,m}, \alpha^i, \alpha^{i+1, m}, \alpha^{N, m})$, which emphasis the parameter role of $\balpha^{-i, m}$. 

Under the Markovian framework, we denote by $V^{i, m+1}$ the problem value of player $i$ at stage $m$. Following the idea of fictitious play, it is the solution of the following HJB system
\begin{equation}\label{eq:FP_PDE}
\begin{dcases}
V_t^{i,m+1} + \inf_{\alpha^i \in \mc{A}^i} H^i(t, \bm x, (\alpha^i, \bm \alpha^{-i,m})(t, \bm x), \Sigma\transpose\nabla_{\bx} V^{i, m+1}) + \half \text{Tr}(\Sigma\transpose \text{Hess}_{\bx} V^{i, m+1} \Sigma) = 0,\\
V^{i, m+1}(T, \bx) = g^i(\bx).
\end{dcases}
\end{equation}

\item \textbf{Policy Update.} This is slightly different from fictitious play, where \emph{every} player follows her strategy from the previous stage to update the problem value. In this case, it is no longer an optimization, but a linear problem for the value function induced by the fix strategy $\balpha^m$:
\begin{equation}\label{eq:PU_PDE}
\begin{dcases}
V_t^{i,m+1} + H^i(t, \bm x,  \bm \alpha^{m}(t, \bm x),\Sigma\transpose \nabla_{\bx} V^{i, m+1}) + \half \text{Tr}(\Sigma\transpose \text{Hess}_{\bx} V^{i, m+1} \Sigma) = 0,\\
V^{i, m+1}(T, \bx) = g^i(\bx).
\end{dcases}
\end{equation}
\end{enumerate}

After solving out the decoupled PDE~\eqref{eq:FP_PDE} or \eqref{eq:PU_PDE}, at the end of stage $m+1$, a policy $\alpha^{i, m+1}$ is determined by
\begin{equation}\label{def_alphaast}
\alpha^{i, m+1}(t, \bm x) = \argmin_{\alpha^i \in \mc{A}^i} H^i(t, \bm x, (\alpha^i, \bm \alpha^{-i,m})(t, \bm x), \Sigma\transpose\nabla_{\bx} V^{i, m+1}(t, \bx)),
\end{equation}
and policies from all players together form $\balpha^{m+1}$. 

Note that for fictitious play algorithms, $\alpha^{i, m+1}$ is indeed the optimal strategy of problem \eqref{def_decoupled_obj}; while for policy update algorithms, the problem is linear, but we pretend that $V^{i, m+1}$ is the value of an optimization problem, and $\alpha^{i, m+1}$ is determined as if it is an optimizer. In short, the two algorithms differ at how $\balpha^{m+1}$ is update from $\balpha^m$. When interpreting via BSDEs, the different update rules result in slightly different drivers of the backward components, see equations \eqref{eq:FP_BSDE} and \eqref{eq:PU_BSDE} below. Nevertheless, the analysis based on the two algorithms presented in Theorems~\ref{thm_converge_revise} and \ref{thm_bsde_error_revise} follows similarly. 


\subsection{Step II: Solving Each Sub-problem via BSDE}\label{section_bsde_sovling}
For each sub-problem, described by \eqref{eq:FP_PDE} or \eqref{eq:PU_PDE}, we write down their BSDE counterpart:
\begin{align}\label{eq:FP_BSDE}
\begin{dcases}
 \bX_t = \bx_0 + \int_{0}^{t}\Sigma(s,\bX_s) \ud \bW_s,  \\
Y_t^{i, m+1} = g^i(\bX_T) + \int_{t}^{T}\hat H^i(s,\bX_s, \balpha^{-i, m}(s, \bX_s), Z_s^{i, m+1})\ud s - \int_{t}^{T}(Z_s^{i, m+1})\transpose \ud \bW_s, 
\end{dcases}
\end{align} 
where $\hat H^i$ is defined by
\begin{equation}\label{def_Hhat}
\hat H^i(t,\bx, \balpha^{-i}, p^i) = H^i(t, \bx, (a^i(t, \bx, \balpha^{-i}, p^i), \balpha^{-i}), p^i),    
\end{equation}
or
\begin{align}\label{eq:PU_BSDE}
\begin{dcases}
 \bX_t = \bx_0 + \int_{0}^{t}\Sigma(s,\bX_s) \ud \bW_s,  \\
Y_t^{i, m+1} = g^i(\bX_T) + \int_{t}^{T}H^i(s,\bX_s,\balpha^{m}(s, \bX_s), Z_s^{i, m+1})\ud s - \int_{t}^{T}(Z_s^{i, m+1})\transpose \ud \bW_s.
\end{dcases}
\end{align}
Here $\bx_0$ is a random variable whose range covers the states of interest.

\begin{rem}\label{rem_bsdechoice}
Note that, in both BSDEs above, we choose the forward process without a drift term for three reasons: (a) it avoids the involvement of $\bm{\alpha}^m$, and thus keeps the forward process the same from stage to stage; (b) the BSDEs can be vectorized (cf. \eqref{eq_FPvector_revise}) with a single forward process which coincides with forward component of \eqref{def_BSDE} (corresponding to the true solution), both will facilitate our analysis (c) numerically, this means only one forward process needs to be simulated for all $N$ sub-problems, which makes one iteration of step I--II more efficient. 

We also remark that both BSDEs are wellposed under Assumptions~\ref{assumption bsigma}--\ref{assumption 1 revise}, as the drivers $H^i$ and $\hat H^i$ are uniformly Lipschitz in $(\bx, p^i)$ and $g^i(\bX_T)$ is square integrable (cf. \cite[Theorem~4.3.1]{zhang2017backward}).
\end{rem}

For both sub-problems, the connection between the associated BSDEs and PDEs are the same:
\begin{equation}\label{eq_BSDEPDE}
    Y_t^{i, m+1} = V^{i, m+1}(t, \bX_t), \quad  Z_t^{i, m+1} = \Sigma(t, \bX_t)\transpose\nabla_{\bx} V^{i, m+1}(t, \bX_t),
\end{equation}
and according to~\eqref{def_alphaast}, both optimal policy processes at stage $m+1$ are expressed by
\begin{equation}
    \alpha^{i, m+1}_t = a^i(t, \bX_t, \balpha^{-i,m}_t, Z_t^{i, m+1}). 
\end{equation}
Therefore, it suffices to solve these two possibly high-dimensional BSDE systems by an efficient algorithm, which we shall describe and call deep BSDE in the sequel. To avoid repetition and cumbersome notation, the algorithms will be presented on a generic BSDE with possibly non-zero drift term:
\begin{equation}\label{eq:generic_BSDE}
\begin{dcases}
 X_t = x_0 + \int_0^t \mu(s, X_s) \ud s + \int_{0}^{t}\Sigma(s,X_s) \ud W_s,  \\
Y_t = g(X_T) + \int_{t}^{T}F(s,X_s, Z_s)\ud s - \int_{t}^{T}Z_s\transpose \ud W_s.
\end{dcases}
\end{equation}
The algorithm applied to the exact system \eqref{eq:FP_BSDE} and \eqref{eq:PU_BSDE} will be presented in Section~\ref{sec_step2}.


    
The deep BSDE is firstly introduced in \cite{EHaJe:17}, for solving high-dimensional parabolic PDEs. The idea is to solve a single variational form of \eqref{eq:generic_BSDE} after a temporal discretization version using deep neural networks. For a partition $\pi$ of size $N_T$ on the time interval $[0, T], 0 = t_0 < t_1 < \ldots < t_{N_T} = T$,  $\Delta t_k$ and $\Delta W_k$ are short notations for the time and Brownian motion increments respectively, and we denote by $\pinorm$ the mesh of this partition:
\begin{align}\label{def_numerics1}
\Delta t_k = t_{k+1} - t_k, \quad \Delta W_k = W_{ t_{k+1}}-W_{t_k}, \quad \|\pi\| = \max_{0 \leq k \leq N_T-1} \Delta t_k.
\end{align} 
We also define a step function $\pi(t)$, and a set $\mc{T}$ for later use:
\begin{equation}\label{def_numerics2}
   \pi(t) = t_k \text{ for } t \in [t_k, t_{k+1}), \quad  \mc{T}: = \{t_0, t_1, \ldots, t_{N_T-1}\}.
\end{equation}

The deep BSDE method solves the minimization problem:
\begin{align}
&\inf_{\psi_0\in \cN_0^{'},~\{\phi_k\in \cN_k\}_{k=0}^{N_T-1} } \EE|g(X_{T}^{\pi}) - Y_{T}^{\pi}|^2, \label{eq:disc_objective}\\
&s.t.~~ X_0^{\pi}=x_0, \quad Y_0^{\pi} =\psi_0(X_0^{\pi}), \quad Z_{t_k}^{\pi}=\phi_k(X_{t_k}^{\pi}), \quad k=0,\dots,N_T-1\notag \\
&\qquad X_{t_{k+1}}^{\pi} = X_{t_k}^{\pi} + \mu(t_k,X_{t_k}^{\pi})\Delta t_k +\Sigma(t_k,X_{t_k}^{\pi})\Delta W_{k}, \label{eq:disc_X_path} \\
 &\qquad Y_{t_{k+1}}^{\pi} = Y_{t_k}^{\pi} - F(t_k,X_{t_k}^{\pi},Z_{t_k}^{\pi})\Delta t_k + (Z_{t_k}^{\pi})\transpose\Delta W_k, \label{eq:disc_Y_path}
\end{align}
where 
$\cN_0^{'}$ and $\{\cN_k\}_{k=0}^{N_T-1}$ are hypothesis spaces related to deep neural networks, and for brevity, we use the notation $X_0^\pi$ for $X_{t_0}^\pi$, $X_T^\pi$ for $X_{t_{N_T}}^\pi$, {\it etc.} The goal is to find optimal deterministic maps $\psi_0^{\ast}, \{\phi_k^{\ast}\}_{k=0}^{N_T-1}$ such that the \emph{loss function} in \eqref{eq:disc_objective} is minimized. Intuitively, the smaller of \eqref{eq:disc_objective} provides the better approximation to the original problem \eqref{eq:generic_BSDE}. In practice, the expected value is replaced by the loss of a very deep neural network, which is formed by stacking all the subnetworks $\psi_0, \{\phi_k\}_{k=0}^{N_T-1}$ in sequence according to \eqref{eq:disc_Y_path}. The loss is computed by generating sample paths of $\{W_{t_k}\}_{k=0}^{N_T}$ and producing \eqref{eq:disc_X_path}--\eqref{eq:disc_Y_path}. At each stage, there are $N$ losses corresponding to $N$ sub-problems solved by the deep BSDE method.

Here we recall a convergence theorem for the deep BSDE method from \cite{HaLo:18}.
    
\begin{thm}\label{thm_deep_bsde}
For the generic BSDE \eqref{eq:generic_BSDE}, we assume:
\begin{enumerate}
    \item The functions $\mu$, $\Sigma$, $g$ and $F$ satisfy the following Lipschitz condition:
    \begin{align*}
        |\mu(t,x_1) - \mu(t,x_2)|^2 &+ \|\Sigma(t,x_1) - \Sigma(t,x_2)\|_F^2 + | F(t,x_1,p_1) - F(t,x_2,p_2)|^2 \\& + |g(x_1) - g(x_2)|^2\le L\left[|x_1 - x_2|^2 + |p_1 - p_2|^2\right],
    \end{align*}
    for a constant $L > 0$;
    \item The functions $\mu$, $\Sigma$ and $h$ are all 1/2-H\"older continuous with respect to $t$. For simplicity, we use $K$ for this H\"older constant.
    \item We  also use $K$ to denote the upper bound of $|\mu(0,0)|^2$, $\|\Sigma(0,0)\|_F^2$, $|F(0,0,0)|^2$ and $|g(0)|^2$.
\end{enumerate}
Then, we have the following two estimates:
\begin{align}
    &\sup_{t \in [0,T]} \EE|Y_t - Y_{\pi(t)}^\pi|^2 + \int_{0}^T \EE\|Z_t - Z_{\pi(t)}^\pi\|_F^2 \rmd t \le C\left[\pinorm + \EE|g(X_{T}^\pi) - Y_{T}^\pi|^2\right] \label{eq_deep_bsde_backward}\\
    &\inf_{\psi_0\in \cN_0^{'},~\{\phi_k\in \cN_k\}_{k=0}^{N_T-1} } \EE|g(X_{T}^{\pi}) - Y_{T}^{\pi}|^2 \le C\Bigg[\pinorm + \notag \\
    &\hspace{50pt} \left.\inf_{\psi_0\in \cN_0^{'},~\{\phi_k\in \cN_k\}_{k=0}^{N_T-1}} \left\{\EE|Y_0 - \psi_0(x_0)|^2 + \sum_{k = 0}^{N_T-1} \EE\|\hat{Z}_{t_k} - \phi_k(X_{t_k}^\pi)\|_F^2\Delta t_k\right\}\right] \label{eq_deep_bsde_forward},
\end{align}
where 
$\cN_0^{'}$ and $\{\cN_k\}_{k=0}^{N_T-1}$ are the hypothesis spaces for neural network architectures to approximate $Y_0^\pi$ and $Z_{t_k}^\pi$, 
$\pinorm$ and $\pi(t)$ are given in \eqref{def_numerics1}--\eqref{def_numerics2}, $\hat{Z}_{t_k} = (\Delta t_k)^{-1} \EE[\int_{t_k}^{t_{k+1}} Z_t \rmd t |X_{t_k}^\pi]$, and $C > 0$ is a constant only depending on $L$, $T$, $K$ and $\EE|\bx_0|^2$. 
\end{thm}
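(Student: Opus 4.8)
The plan is to prove the two estimates separately, with a single technical engine shared by both: rewrite the true solution of \eqref{eq:generic_BSDE} in its \emph{forward} form $Y_t = Y_0 - \int_0^t F(s,X_s,Z_s)\,\rmd s + \int_0^t Z_s\transpose \rmd W_s$, introduce the piecewise-constant continuous interpolation $\tilde Y^\pi$ of the Euler iterates \eqref{eq:disc_Y_path} (which likewise admits a forward It\^o representation with frozen integrand $Z^\pi_{\pi(t)}$), and then control the difference by It\^o's formula, Young's inequality, and Gronwall. Two preliminary facts are needed first. The standard strong error for the forward scheme, $\sup_t \EE|X_t - X^\pi_{\pi(t)}|^2 \le C\pinorm$, follows from the Lipschitz and H\"older hypotheses in Assumptions~1--3 by a routine Gronwall argument, and it also gives $\EE|g(X_T)-g(X^\pi_T)|^2 \le C\pinorm$. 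Second, the $L^2$ path regularity of the control, $\sum_k \EE\int_{t_k}^{t_{k+1}} \|Z_t - \hat{Z}_{t_k}\|_F^2\,\rmd t \le C\pinorm$ with $\hat{Z}_{t_k}$ the $L^2$-projection in the statement, legitimizes approximating $Z$ by grid values; I expect this to be the heaviest ingredient to invoke.

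For the a posteriori estimate \eqref{eq_deep_bsde_backward} I would apply It\^o's formula to $|Y_t - \tilde Y^\pi_t|^2$ and integrate from $t$ to $T$. The terminal term becomes $\EE|g(X_T) - Y^\pi_T|^2 \le 2\EE|g(X_T)-g(X^\pi_T)|^2 + 2\EE|g(X^\pi_T)-Y^\pi_T|^2 \le C\pinorm + 2\,\EE|g(X^\pi_T)-Y^\pi_T|^2$, exactly the right-hand side of \eqref{eq_deep_bsde_backward}. The It\^o correction produces a favorable term $+\EE\int_t^T \|Z_s - Z^\pi_{\pi(s)}\|_F^2\,\rmd s$ on the left, while the drift cross-term is handled by Young's inequality together with the Lipschitz bound on $F$; choosing the Young parameter small lets me absorb the $Z$-difference arising from $|F_s - F^\pi_s|^2$ into that favorable term, leaving only $|X_s - X^\pi_{\pi(s)}|^2$, the time-discretization ($\frac12$-H\"older) contribution, and $|Y_s - \tilde Y^\pi_s|^2$. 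A backward Gronwall inequality then yields \eqref{eq_deep_bsde_backward}.

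For the a priori estimate \eqref{eq_deep_bsde_forward} I would argue in the reverse direction by exhibiting a near-optimal choice of networks. Compare the scheme driven by arbitrary $\psi_0,\{\phi_k\}$ with the ``ideal'' scheme initialized at the true $Y_0$ and driven by the projected controls $\hat{Z}_{t_k}$. The ideal scheme has terminal error $O(\pinorm)$ --- the classical Euler-for-BSDE rate, and precisely where the path-regularity lemma is consumed --- whereas the discrepancy between the actual and ideal schemes is propagated forward step by step, each Brownian increment handled by the It\^o isometry and each drift step by the Lipschitz bound on $F$, giving a forward Gronwall bound in terms of $\EE|Y_0-\psi_0(x_0)|^2 + \sum_k \EE\|\hat{Z}_{t_k}-\phi_k(X^\pi_{t_k})\|_F^2\Delta t_k$. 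Taking the infimum over the hypothesis spaces produces \eqref{eq_deep_bsde_forward}.

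The main obstacle is twofold, and both parts concern the control process and the uniformity of constants. The integrand $Z$ enters the scheme only through the frozen value $Z^\pi_{\pi(t)}$ against $\Delta W_k$, whereas the equation carries $\int Z_s\,\rmd W_s$, so every comparison must pass through the projection $\hat{Z}_{t_k}$ and rely on the path-regularity estimate above; obtaining that estimate with the sharp rate $\pinorm$ is the technical heart. Equally important is keeping all Gronwall constants independent of the number of steps $N_T$: the forward recursions accumulate factors $\prod_k(1+C\Delta t_k)\le e^{CT}$, so one must organize the inequalities so that these telescope into a constant depending only on $L$, $T$, $K$ and $\EE|x_0|^2$, rather than blowing up as $\pinorm\to 0$.
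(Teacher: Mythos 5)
Your outline is essentially correct, but note that the paper itself contains no proof of this statement: Theorem~\ref{thm_deep_bsde} is imported verbatim from \cite{HaLo:18} (as Remark~\ref{rem_thm1} states, it is a brief summary of Theorems 1 and 2 there), so the only meaningful comparison is with that reference. Your plan --- It\^o's formula plus Young/Gronwall for the a posteriori bound \eqref{eq_deep_bsde_backward}, and for \eqref{eq_deep_bsde_forward} a comparison of the network-driven scheme with an ideal Euler scheme started at the true $Y_0$ and driven by $\hat Z_{t_k}$, propagated by a forward recursion, with the $L^2$ path regularity of $Z$ as the heaviest lemma --- matches the structure of the proofs in \cite{HaLo:18} (specialized to the decoupled case, which is why no weak-coupling or small-time condition is needed here); the one point you gloss over is that $\hat Z_{t_k}$ is defined by conditioning on $X_{t_k}^\pi$ rather than on $\mathcal{F}_{t_k}$, so the classical path-regularity estimate must be transferred from the filtration projection to this Markovian projection, a step the reference carries out explicitly.
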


\begin{rem}\label{rem_thm1}
Theorem \ref{thm_deep_bsde} is a brief summary of Theorems 1 and 2 in \cite{HaLo:18}. The first inequality \eqref{eq_deep_bsde_backward} shows that the distance between the true solution of BSDE \eqref{eq:generic_BSDE} and the output of the deep BSDE method can be controlled by its loss function.  
In other words, in practice, the accuracy of the numerical solution is effectively indicated by the value of loss function.
The second inequality \eqref{eq_deep_bsde_forward} states that a small loss function of the deep BSDE method is attainable if the hypothesis spaces ($\cN_0'$ and $\{\cN_k\}_{k=0}^{N_T -1}$) can approximate specific functions well. Beyond Theorems 1 and 2 in \cite{HaLo:18}, there are still some theoretical issues remaining unresolved regarding the deep BSDE method, which are common in almost all the algorithms involving deep neural networks: First, it is unclear yet that what types of hypothesis spaces can approximate the specific functions in the deep BSDE method without the curse of dimensionality ({\it i.e.}, the number of parameters of neural networks grows at most polynomially both in dimension and the reciprocal of the approximation error). Second, even with suitable function spaces, it is hard to guarantee the 
optimization algorithm can find approximately the minimizer of the highly nonconvex loss function.
We refer the interested readers to \cite{EHaJe:17,HaJeE:18,HaLo:18} for more detailed descriptions and theoretical justifications of the deep BSDE method. Details on the implementation in this paper are presented in Section~\ref{sec_numerics}.
\end{rem}

\section{Convergence Analysis}\label{sec_analysis}

This section will provide the theoretical foundation for the deep fictitious play algorithm. Section~\ref{sec_analysis_step1} focuses on the decoupling step. Theorem~\ref{thm_converge_revise} proves the convergence to the true Nash equilibrium, if the decoupled sub-problems are solved exactly and repeatedly. Section~\ref{sec_step2} focuses on the numerical error on the deep BSDE algorithm for solving each sub-problem. Theorem~\ref{thm_bsde_error_revise} presents a game version of Theorem~\ref{thm_deep_bsde}. Section~\ref{sec_analysis_step3} combines the previous results, identifies the $\eps$-Nash equilibrium produced by deep fictitious play, and analyzes its numerical performance on the original game.

\subsection{Convergence Analysis of the Decoupling Step}\label{sec_analysis_step1}

In this section, we will focus on the convergence of the decoupling step, {\it i.e.}, how the systems defined by PDEs (\ref{eq:FP_PDE}) (fictitious play) or (\ref{eq:PU_PDE}) (policy update) converge to the system defined by PDEs (\ref{def_HJB}), or equivalently, how the corresponding BSDE systems (\ref{eq:FP_BSDE}) (fictitious play) or (\ref{eq:PU_BSDE}) (policy update) converge to the  BSDE system (\ref{def_BSDE}).

Throughout this paper, we shall use the following assumptions.

\begin{assu}\label{assumption 1 revise}
We shall use $|\cdot|$, $\|\cdot\|_F$ and $\|\cdot\|_S$ to denote the Euclidean norm, Frobenius norm and spectral norm, respectively. 
\begin{enumerate}[(1)]
\item The functions $\phi(t, \bx, \balpha): [0, T]\times \RR^n \times \MCA \to \RR^k $, $\Sigma(t, \bx): [0,T] \times \RR^n \to \RR^{n \times k}$, $\bm{f}(t, \bx, \balpha) = (f^1,f^2,\dots,f^N)\transpose(t,\bx, \balpha): [0,T] \times \RR^n \times \MCA \to \RR^N$ and $\bm{g}(\bx) = [g^1,g^2,\dots,g^N]\transpose(\bx): \RR^n \to \RR^N$ are Lipschitz with respect to $\bm{x}$ and $\bm{\alpha}$:
    \begin{align*}
        |\phi(t,\bm{x}_1,\bm{\alpha}_1) - \phi(t,\bm{x}_2,\bm{\alpha}_2)|^2 &\le L[ |\bm{x}_1 - \bm{x}_2|^2 + |\bm{\alpha}_1 - \bm{\alpha}_2|^2], \\
        \|\Sigma(t,\bm{x}_1) - \Sigma(t,\bm{x}_2)\|_F^2 &\le L|\bx_1-\bx_2|^2, \\
        |\bm{f}(t,\bm{x}_1,\bm{\alpha}_1) - \bm{f}(t,\bm{x}_2,\bm{\alpha}_2)|^2 &\le L[ |\bm{x}_1 - \bm{x}_2|^2 + |\bm{\alpha}_1 - \bm{\alpha}_2|^2], \\
        |\bm{g}(\bm{x}_1) - \bm{g}(\bm{x}_2)|^2 &\le L|\bx_1-\bx_2|^2.
    \end{align*}
    Here $L$ is a positive constant.
    \item The function $\bm{a}(t,\bm{x},\bm{\alpha},\bm{p})$ given in \eqref{def_a} is well-defined, and is Lipschitz  with respect to $\bm{x}$, $\balpha$ and $\bm{p}$:
    \begin{equation*}
        |\bm{a}(t,\bm{x}_1,\bm{\alpha}_1,\bm{p}_1) - \bm{a}(t,\bm{x}_2,\bm{\alpha}_2,\bm{p}_2)|^2 \le L(1-a_\alpha)[|\bm{x}_1 -\bm{x}_2|^2  + \|\bm{p}_1 - \bm{p}_2\|_F^2]   + a_\alpha|\bm{\alpha}_1 - \bm{\alpha}_2|^2,
    \end{equation*}
    with $0 < a_\alpha < 1$. Notice that this also implies that  $\bm{\alpha}(t,\bm{x},\bm{p})$ defined by \eqref{def_alphagameast} exists and is unique,  which is Lipschitz with respect to $\bm{x}$ and $\bm{p}$:
    \begin{equation*}
        |\bm{\alpha}(t,\bm{x}_1,\bm{p}_1)-\bm{\alpha}(t,\bm{x}_2,\bm{p}_2)|^2 \le L[|\bm{x}_1 - \bm{x}_2|^2 + \|\bm{p}_1 - \bm{p}_2\|_F^2].
    \end{equation*}
    \item\label{assumption 1-phisigma revise} The functions $\phi$ and $\Sigma$ are uniformly bounded:
    \begin{align*}
        &\|\Sigma(t,\bm{x})\|_S^2 \le M, \\
       &\max_{1\le i \le k} |\phi^i(t,\bm{x},\bm{\alpha})|^2 \le M.
    \end{align*}
    Here $\phi^i$ denotes the i-th component of $\phi$ and $M$ is a positive constant.
     \item The functions $\phi$, $\Sigma$, $\bm{f}$, $\bm{g}$ and $\bm{a}$ are all 1/2-H\"older continuous with respect to $t$. We shall use $K$ as the upper bound of all the H\"older constants.
    \item The constant $K$ is also the upper bound of constants $|\bm{a}(0,0,0,0)|^2$, $|\bm{f}(0,0,0)|^2$, $|\bm{g}(0)|^2$, $|\phi(0,0,0)|^2$ and $\|\Sigma(0,0)\|_F^2$.
\end{enumerate}
\end{assu}

\begin{assu}\label{assumption lipschitz solution}
There exists an adapted solution of the BSDE system (\ref{def_BSDE}) such that
\begin{equation}
    \EE \left[\sup_{0\le t \le T}(|\bm{X}_t|^2 +|\bm{Y}_t|^2) + \int_{0}^T \|\bm{Z}_t\|_F^2 \rmd t\right] < + \infty.
\end{equation}
Moreover, we assume that
\begin{equation*}
   \|\bm{Z}_t\|_S^2 \le M' \quad \mathbb{L}\times\mathbb{P}\text{-a.s.. } 
\end{equation*}
\end{assu}


We remark that Assumption~\ref{assumption 1 revise} is quite standard in the analysis of stochastic differential games and Assumption~\ref{assumption lipschitz solution} can be satisfied in several cases.
For instance, Assumption \ref{assumption lipschitz solution} holds true under Assumptions~\ref{assumption bsigma} and \ref{assumption 1 revise} with small time duration. We provide a detailed proof of this point (Proposition~\ref{prop:assump3_smalltime}) in the appendix. Note that the small time duration assumption is commonly seen in games, for instance in solving mean-field games \cite{CaDe2:17} and in the convergence of numerical schemes for mean-field games \cite{bayraktar2018numerical}.
Besides, through the nonlinear Feynman-Kac formula and the boundedness of $\Sigma$ in Assumption \ref{assumption 1 revise}, Assumption~\ref{assumption lipschitz solution} is also satisfied if the solution to the HJB system \eqref{def_HJB} is uniformly Lipschitz with respect to $\bx$. Specifically, with additional assumptions:
\begin{equation}\label{assump_HJB}
     \bm{f}, \bm{g} \text{ are bounded}, \text{ and } \Sigma\Sigma\transpose \text{ is uniformly nondegenerate}, 
\end{equation}
$V^i$ is indeed continuous and differentiable with bounded and continuous gradients on $[0,T) \times \RR^n$ (cf. \cite[Proposition~2.13]{CaDe1:17}). Therefore, using small time duration result (Proposition~\ref{prop:assump3_smalltime}) on $[T-\delta, T]$ for small $\delta$, one has the uniformly Lipschitz on $[0,T]$ and Assumption~\ref{assumption lipschitz solution} is fulfilled under \eqref{assump_HJB}. 
We also point out that Assumption \ref{assumption lipschitz solution} implies that the BSDE system \eqref{def_BSDE} has a unique adapted $L^2$-integrable solution, which will be shown in Proposition~\ref{prop:unique} in the appendix.

Recalling that $m$ is the index of the stage in the decoupling step, now we present the main result in this section regarding its convergence.
\begin{thm}\label{thm_converge_revise}
Under Assumptions \ref{assumption bsigma}, \ref{assumption 1 revise} and \ref{assumption lipschitz solution}, for any $\epsilon \in (0,1-a_\alpha)$, there exists a constant $C(\epsilon) > 0$  which only depends on $T$, $L$, $M$, $M'$ and $\epsilon$  such that
\begin{equation}\label{eq_converge_revise}
    \sup_{0 \le t \le T} \EE|\bm{Y}_t^{m} - \bm{Y}_t|^2 +\int_{0}^{T}\EE\|\bm{Z}_t^{m} - \bm{Z}_t\|_F^2 \rmd t + \int_{0}^{T}\EE|\bm{\alpha}_t^{m} - \bm{\alpha}^*_t|^2 \rmd t \le C(\epsilon) (a_\alpha + \epsilon)^m \int_{0}^{T}\EE|\bm{\alpha}_t^{0} - \bm{\alpha}^{*}_t|^2 \rmd t,
\end{equation}
where $(\bm{Y}^m_t,\bm{Z}^m_t)$ is defined by
\begin{equation*}
    \bm Y^m_t = [Y^{1,m}_t, \ldots, Y^{N,m}_t]\transpose, \quad \bm Z_t^m = [Z^{1,m}_t, \ldots, Z^{N,m}_t]
\end{equation*}
with $(Y^{i,m}_t, Z^{i,m}_t)$ from the BSDE systems (\ref{eq:FP_BSDE}) or (\ref{eq:PU_BSDE}), $(\bm{Y}_t, \bm{Z}_t)$ is defined in (\ref{def_BSDE}) , $\bm{\alpha}_t^{m} = \bm{\alpha}^m(t,\bm{X}_t)$ and $\bm{\alpha}^\ast_t = \bm{\alpha}^\ast(t,\bm{X}_t)$. 
\end{thm}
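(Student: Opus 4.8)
The plan is to exploit the fact that, by the driftless choice in \eqref{eq:FP_BSDE} and \eqref{eq:PU_BSDE}, every stage $m$ and the true system \eqref{def_BSDE} share the \emph{same} forward process $\bX_t$ and the same terminal data $\bm g(\bX_T)$; only the drivers differ. Since $\balpha^\ast(t,\bx) = \balpha(t,\bx,\bm Z_t)$ is the fixed point of \eqref{def_alphagameast}, we have $\alpha^{\ast,i} = a^i(t,\bx,\balpha^{\ast,-i},Z^i)$, so the $i$-th component of the true driver $\miniH(s,\bX_s,\bm Z_s)$ equals $\hat H^i(s,\bX_s,\balpha^{\ast,-i}(s,\bX_s),Z^i_s)$, matching the form of the fictitious-play driver in \eqref{eq:FP_BSDE}. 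Writing $\delta \bm Y^{m+1} := \bm Y^{m+1} - \bm Y$ and $\delta \bm Z^{m+1} := \bm Z^{m+1} - \bm Z$, the difference solves a BSDE with zero terminal condition whose driver is the difference of Hamiltonians. For the policy-update driver in \eqref{eq:PU_BSDE} the same reduction applies with $H^i(\cdot,\balpha^m,\cdot)$ in place of $\hat H^i$; since the policy extraction \eqref{def_alphaast} is identical in both cases, the two options lead to the \emph{same} recursion, and I carry out the argument for fictitious play.

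Two ingredients drive the estimate. First (BSDE stability), I would apply It\^o's formula to $e^{\beta s}|\delta Y^{i,m+1}_s|^2$ for a weight $\beta>0$ to be chosen, splitting the driver difference into a \emph{feedback} part $\hat H^i(s,\bX_s,\balpha^{-i,m},Z^{i,m+1}) - \hat H^i(s,\bX_s,\balpha^{-i,m},Z^{i})$ and a \emph{source} part $\hat H^i(s,\bX_s,\balpha^{-i,m},Z^{i}) - \hat H^i(s,\bX_s,\balpha^{\ast,-i},Z^{i})$. The key point is that $\hat H^i(t,\bx,\balpha^{-i},\cdot)$ is an infimum over $\alpha^i$ of maps affine in $p^i$ with slopes $\phi(t,\bx,(\alpha^i,\balpha^{-i}))$; by Assumption~\ref{assumption 1 revise}\eqref{assumption 1-phisigma revise} these slopes are bounded, so the feedback part is Lipschitz in $p^i$ with a constant $C_1=C_1(M)$ that does \emph{not} involve the a priori unbounded iterate $Z^{i,m+1}$. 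The source part evaluates the Hamiltonian at the \emph{true} $Z^i_s$, bounded by $\sqrt{M'}$ (Assumption~\ref{assumption lipschitz solution}), so with the Lipschitz bounds on $\phi$, $\bm f$ and $\bm a$ it is controlled by $C_0|\balpha^{-i,m}_s - \balpha^{\ast,-i}_s|$ with $C_0=C_0(L,M,M')$. Choosing $\beta$ large to absorb the $|\delta Y|^2$ terms and using Young's inequality with a free parameter $\lambda$ on the feedback term, then summing over $i$, yields
\begin{equation*}
\EE\int_0^T e^{\beta s}\|\delta \bm Z^{m+1}_s\|_F^2 \rmd s \le \frac{C}{\lambda}\,\EE\int_0^T e^{\beta s}|\balpha^m_s - \balpha^\ast_s|^2 \rmd s,
\end{equation*}
with $\beta=\beta(\lambda,M)$ and $C=C(L,M,M',N)$. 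Second (policy contraction), applying Assumption~\ref{assumption 1 revise}(2) to $\balpha^{m+1}_t = \bm a(t,\bX_t,\balpha^m_t,\bm Z^{m+1}_t)$ versus $\balpha^\ast_t = \bm a(t,\bX_t,\balpha^\ast_t,\bm Z_t)$---the $\bx$-argument cancels because the forward process is common---gives the pointwise bound $|\balpha^{m+1}_t - \balpha^\ast_t|^2 \le L(1-a_\alpha)\|\delta\bm Z^{m+1}_t\|_F^2 + a_\alpha |\balpha^m_t - \balpha^\ast_t|^2$.

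To conclude, I would combine the two ingredients in the weighted norm,
\begin{equation*}
\EE\int_0^T e^{\beta s}|\balpha^{m+1}_s - \balpha^\ast_s|^2 \rmd s \le \Big(a_\alpha + L(1-a_\alpha)\tfrac{C}{\lambda}\Big)\EE\int_0^T e^{\beta s}|\balpha^m_s - \balpha^\ast_s|^2 \rmd s,
\end{equation*}
and then choose $\lambda$ so large that $L(1-a_\alpha)C/\lambda \le \epsilon$, forcing the contraction factor below $a_\alpha + \epsilon < 1$. Iterating $m$ times and passing from the weighted to the unweighted norm (using $1\le e^{\beta s}\le e^{\beta T}$ on $[0,T]$) produces the $\int_0^T\EE|\balpha^m - \balpha^\ast|^2$ term of \eqref{eq_converge_revise} with $C(\epsilon)=e^{\beta T}$, which blows up as $\epsilon\to 0$, consistent with the statement. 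Finally, feeding the established geometric decay of $\int_0^T\EE|\balpha^{m-1} - \balpha^\ast|^2$ back into the stability estimate at stage $m$ bounds $\sup_t\EE|\delta\bm Y^m_t|^2 + \int_0^T\EE\|\delta\bm Z^m_t\|_F^2$ by the same geometric factor, completing \eqref{eq_converge_revise}.

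The main obstacle is the weighting argument. A naive substitution of the stability bound into the policy contraction yields a factor $a_\alpha + L(1-a_\alpha)C'$ with $C'$ a fixed BSDE constant, which need not lie below $1$. The crux is therefore to drive the stability constant on the $\bm Z$-difference arbitrarily small by taking the exponential weight $\beta$ (equivalently $\lambda$) large, trading the rate down to $a_\alpha + \epsilon$ against a prefactor $e^{\beta T}$ that degrades as $\epsilon\to 0$. The supporting technical subtlety is obtaining the feedback Lipschitz constant $C_1$ \emph{independent} of the iterate $Z^{i,m+1}$, which relies precisely on the concavity/min-of-affine structure of $\hat H^i$ in $p^i$ together with the boundedness of $\phi$ and of the true adjoint process $\bm Z$.
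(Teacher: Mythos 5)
Your proposal is correct and follows essentially the same route as the paper's proof: a difference-BSDE estimate via It\^o's formula with exponential weight $e^{\beta t}$, driver bounds relying on the boundedness of $\phi$ (Assumption~\ref{assumption 1 revise}\eqref{assumption 1-phisigma revise}) and of the true $\bm{Z}$ (Assumption~\ref{assumption lipschitz solution}), and the Lipschitz contraction of $\bm{a}$ with the rate pushed below $a_\alpha+\epsilon$ by taking $\beta$ (equivalently your $\lambda$) large, at the cost of an $e^{\beta T}$ prefactor. The only cosmetic difference is that you keep the fictitious-play driver in inf-form (a minimum of maps affine in $p^i$ with bounded slopes), which avoids the $|\bm{\alpha}^{m+1}_t-\bm{\alpha}^\ast_t|^2$ term that the paper's explicit-minimizer decomposition of $\delta\bm{H}^m_t$ produces and then removes by the rearrangement in \eqref{alpha_relation_revise}.
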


\begin{proof}
Theorem~\ref{thm_converge_revise} states the convergence of both fictitious play (according to~\eqref{eq:FP_BSDE}) and policy update (according to~\eqref{eq:PU_BSDE}).
The proofs of these two are very similar, and we shall focus on the fictitious play method for brevity.

To perform convergence analysis, we first rewrite the BSDE systems to show the explicit dependence on the players' strategies. 
For \eqref{def_BSDE}, it reads as
\begin{equation}\label{eq_BSDEvector_revise}
    \begin{dcases}
    \bm{X}_t = \bm{x}_0 + \int_{0}^{t}\Sigma(s,\bm{X}_s)\rmd \bm{W}_s, \\
    \bm{Y}_t = \bm{g}(\bm{X}_T) + \int_{t}^T  \bm{H}(s,\bm{X}_s,\bm{\alpha}^*_s,\bm{Z}_s)\rmd s - \int_{t}^T (\bm{Z}_s)\transpose \rmd \bm{W}_s, \\
    \bm{\alpha}^*_t =  \bm{a}(t,\bm{X}_t,\bm{\alpha}^*_t,\bm{Z}_t),
    \end{dcases}
\end{equation}
where $\bm{H}, \bm a$ are defined in \eqref{def_Hamiltonian} and \eqref{def_a}.
The rewritten system of \eqref{eq:FP_BSDE} is
\begin{equation}\label{eq_FPvector_revise}
    \begin{dcases}
    \bm{X}_t = \bm{x}_0 + \int_{0}^{t}\Sigma(s,\bm{X}_s)\rmd \bm{W}_s, \\
    \bm{Y}^{m+1}_t = \bm{g}(\bm{X}_T) + \int_{t}^T  \hat{\bm{H}}(s,\bm{X}_s,\bm{\alpha}_s^m,\bm{\alpha}_s^{m+1},\bm{Z}^{m+1}_s)\rmd s - \int_{t}^T (\bm{Z}_s^{m+1})\transpose \rmd \bm{W}_s, \\
    \bm{\alpha}_t^{m+1} =  \bm{a}(t,\bm{X}_t,\bm{\alpha}_t^m,\bm{Z}_t^{m+1}),
    \end{dcases}
\end{equation}
where $\hat {\bm{H}} = [\hat H^1, \ldots, \hat H^N]\transpose$, $\hat{H}^i(t,\bm{x},\bm{\xi},\bm{\gamma},\bm{p}) \equiv H^i(t,\bm{x},(\gamma^i,\bm{\xi}^{-i}),p^i)$ and $p^i$ stands for the $i^{th}$ column of $\bm p$.  Note that this is slightly an abuse of notation with \eqref{def_Hhat}, to show the driver's explicit dependence on $\balpha^{m+1}$.
Also note that the rewritten system \eqref{eq_FPvector_revise} is simply a condensed form of \eqref{eq:FP_BSDE}, concatenating all $Y_t^{i, m}$ into $\bY_t^m$, without changing its decoupled nature. This will also ease the notation in the following proof.

We now define $\delta \bm{H}^m_t = \hat{\bm{H}}(t,\bm{X}_t,\bm{\alpha}_t^m,\bm{\alpha}_t^{m+1},\bm{Z}^{m+1}_t) - \bm{H}(t,\bm{X}_t,\bm{\alpha}^*_t,\bm{Z}_t)$. 
Noticing that
\begin{align}
    \delta H_t^{i,m} &= \phi(t,\bm{X}_t,(\alpha_t^{i,m+1},\bm{\alpha}_t^{-i,m}))\cdot Z_t^{i,m+1} + f^i(t,\bm{X}_t,(\alpha_t^{i,m+1},\bm{\alpha}_t^{-i,m})) \notag \\
    &\quad - \phi(t,\bm{X}_t,\bm{\alpha}^*_t) \cdot Z_t^i - f^{i}(t,\bm{X}_t,\bm{\alpha}^*_t) \notag \\
    &= \phi(t,\bm{X}_t,(\alpha_t^{i,m+1},\bm{\alpha}_t^{-i,m})) \cdot (Z_t^{i,m+1} - Z_t^i) + [\phi(t,\bm{X}_t,(\alpha_t^{i,m+1},\bm{\alpha}_t^{-i,m})) - \phi(t,\bm{X}_t,\bm{\alpha}_t^{m})]\cdot Z_t^i\notag \\
    &\quad +[\phi(t,\bm{X}_t,\bm{\alpha}_t^{m}) - \phi(t,\bm{X}_t,\bm{\alpha}^*_t)]\cdot Z_t^i + [f^i(t,\bm{X}_t,(\alpha_t^{i,m+1},\bm{\alpha}_t^{-i,m})) - f^i(t,\bm{X}_t,\bm{\alpha}_t^{m})] \notag \\
    &\quad +[f^i(t,\bm{X}_t,\bm{\alpha}_t^{m}) - f^{i}(t,\bm{X}_t,\bm{\alpha}^*_t)].
\end{align}
Therefore, with Assumptions \ref{assumption 1 revise} and \ref{assumption lipschitz solution},
\begin{align}
    |\delta \bm{H}_t^m|^2 &\le C_1\left\{ \|\bm{Z}_t^{m+1} - \bm{Z}_t\|_F^2 + \sum_{i=1}^{N}(|Z_t^i|^2+1)|\alpha_t^{i,m+1} - \alpha_t^{i,m}|^2 + \|Z_t\|_S^2|\bm{\alpha}_t^m - \bm{\alpha}^*_t|^2 +  |\bm{\alpha}_t^m - \bm{\alpha}^*_t|^2\right\} \notag \\
    &\le C_2\left\{\|\bm{Z}_t^{m+1} - \bm{Z}_t\|_F^2 + |\bm{\alpha}_t^m - \bm{\alpha}^*_t|^2 + |\bm{\alpha}_t^{m+1}-\bm{\alpha}^*_t|^2\right\},
\end{align}
where $C_1,C_2$ are two positive constants only depending on $L$, $M$ and $M'$.

Next, we define $\delta \bm{Y}_t^{m} = \bm{Y}_t^m - \bm{Y}_t$, $\delta \bm{Z}_t^m = \bm{Z}_t^m - \bm{Z}_t$, $\delta \bm{\alpha}_t^m = \bm{\alpha}_t^m - \bm{\alpha}^*_t$. With equations \eqref{eq_BSDEvector_revise} and \eqref{eq_FPvector_revise}, we have
\begin{equation}
    \rmd \delta \bY_t^{m+1} = - \delta \bm{H}_t^m\rmd t + (\delta \bZ_t^{m+1})\transpose\rmd \bW_t.
\end{equation}
For any $\beta > 0$, by Ito's formula, taking expectation on both sides and integrating from $t$ to $T$, we have
\begin{align}
   e^{\beta t}\EE|\delta \bm{Y}^{m+1}_t|^2 + \int_{t}^T e^{\beta s}\EE\|\delta \bm{Z}^{m+1}_s\|_F^2\rmd s &= \int_{t}^Te^{\beta s}\EE[2\delta \bm{H}^m_s\cdot \delta \bm{Y}^{m+1}_s - \beta|\delta \bm{Y}^{m+1}_s|^2]\rmd s \\
   &\le \frac{1}{\beta} \int_{t}^Te^{\beta s}\EE|\delta \bm{H}^m_s|^2\rmd s,
\end{align}
where the inequality holds because $2\delta \bm{H}^m_s\cdot \delta \bY_s^{m+1} \le \beta^{-1}|\delta \bm{H}^m_s|^2 + \beta|\delta \bY_s^{m+1}|^2$. Then, taking the supremum with respect to $t$, we deduce
\begin{align}\label{converge_control}
    \sup_{0\le t \le T}\EE e^{\beta t}|\delta \bm{Y}_t^{m+1}|^2 + \int_{0}^Te^{\beta t}\EE\|\delta \bm{Z}_t^{m+1}\|_F^2\rmd t &\le \frac{1}{\beta}\int_{0}^{T}e^{\beta t}\EE|\delta \bm{H}_t^m|^2\rmd t \notag \\
    &\le \frac{C_2}{\beta}\int_{0}^{T}e^{\beta t}\EE[\|\delta\bm{Z}_t^{m+1}\|_F^2 +|\delta \bm{\alpha}_t^m|^2 +|\delta \bm{\alpha}_t^{m+1}|^2]\rmd t .
\end{align}
Choosing $\beta = C_2$, we can obtain
\begin{equation}\label{Y-control-revise}
    \sup_{0\le t \le T}\EE|\delta \bm{Y}_t^{m+1}|^2 \le e^{C_2T}\int_{0}^{T}[\EE|\delta \bm{\alpha}_t^m|^2 + \EE|\delta \bm{\alpha}_t^{m+1}|^2]\rmd t.
\end{equation}
For $\beta > C_2$, using inequality \eqref{converge_control} again, we have
\begin{equation}\label{Z-control-revise}
     \int_{0}^Te^{\beta t}\EE\|\delta \bm{Z}_t^{m+1}\|_F^2\rmd t \le \frac{C_2}{\beta -C_2}\int_{0}^{T}e^{\beta t}[\EE|\delta \bm{\alpha}_t^m|^2 + \EE|\delta \bm{\alpha}_t^{m+1}|^2]\rmd t.
\end{equation}
The Lipschitz condition of the function $\bm{a}$ and estimate \eqref{Z-control-revise} give that 
\begin{align}
    \int_{0}^{T}e^{\beta t}\EE|\delta \bm{\alpha}_t^{m+1}|^2\rmd t &\le \int_{0}^{T}e^{\beta t}[L(1-a_\alpha)\EE\|\delta \bm{Z}_t^{m+1}\|_F^2 + a_\alpha\EE|\delta \bm{\alpha}_t^{m}|^2]\rmd t  \notag\\
    &\le a_\alpha\int_{0}^{T}e^{\beta t}\EE|\delta \bm{\alpha}_t^{m}|^2\rmd t + \frac{LC_2}{\beta - C_2}\int_{0}^{T}e^{\beta t}[\EE|\delta \bm{\alpha}_t^m|^2 + \EE|\delta \bm{\alpha}_t^{m+1}|^2]\rmd t,
\end{align}
which is equivalent to (further assuming $\beta > (L+1) C_2)$
\begin{equation}\label{alpha_relation_revise}
    \int_{0}^{T}e^{\beta t}\EE|\delta \bm{\alpha}_t^{m+1}|^2 \rmd t \le \frac{\beta -C_2}{\beta - (L+1)C_2}\left(a_\alpha  + \frac{LC_2}{\beta - C_2}\right)\int_{0}^{T}e^{\beta t}\EE|\delta \bm{\alpha}_t^{m}|^2\rmd t. 
\end{equation}
For a given $\eps \in (0,  1- a_\alpha)$, we can choose $\beta $ large enough such that
\begin{equation}
    \frac{\beta -C_2}{\beta - (L+1)C_2}\left(a_\alpha  + \frac{LC_2}{\beta - C_2}\right) \le a_\alpha +\epsilon < 1.
\end{equation}
Then, there exists a constant $C(\epsilon) > 0$ that only depends on $T$, $L$, $M$, $M'$ and $\epsilon$  such that
\begin{equation}
    \int_{0}^{T}\EE|\delta \bm{\alpha}_t^m|^2\rmd t \le C(\epsilon)(a_\alpha + \epsilon)^m \int_{0}^{T}\EE|\bm{\alpha}_t^0 - \bm{\alpha}_t^{*}|^2 \rmd t.
\end{equation}
Combining the last inequality with inequalities \eqref{Y-control-revise} and \eqref{Z-control-revise}, we obtain our result.
\end{proof}

\begin{rem}
We remark that the convergence in Theorem~\ref{thm_converge_revise} holds for games with any size of $N$ instead of the focus in the numerical algorithm which is between 5 and 100, and is independent of any numerical scheme. 
\end{rem}

\subsection{Numerical Error Analysis}\label{sec_step2}

This section is dedicated to analyzing the numerical error introduced by the deep BSDE method when solving each sub-problem. Specifically, we aim to control the distance between $(\bm X_t, \bm Y_t, \bm Z_t)$ defined in \eqref{def_BSDE} and the discrete system $(\bX^\pi_{t_k}, \bY^{\pi, m}_{t_k}, \bZ^{\pi, m}_{t_k})$ satisfying:
\begin{align}\label{eq_Xdiscrete}
\begin{dcases}
\bm{X}_{t_{k+1}}^\pi = \bm{X}_{t_k}^\pi + \Sigma(t_k,\bm{X}_{t_k}^\pi)\Delta W_k, \quad \bm{X}_0^\pi = \bm{x}_0, \\
    \bm{Y}_{t_{k+1}}^{\pi,m+1} = \bm{Y}_{t_k}^{\pi,m+1} - \bm{h}^m(t_k,\bm{X}_{t_k}^\pi,\bm{Z}_{t_k}^{\pi,m+1})\Delta t_k + (\bm{Z}_{t_k}^{\pi,m+1})\transpose \Delta \bm{W}_k,
\end{dcases}
\end{align}
where $\bm{h}^m$ is either $[h^{1,m},\cdots, h^{N,m}]\transpose$ with $h^{i,m}(t,\bm{x},\bm{p}) = \inf_{\alpha^i \in \mathcal{A}^i}H^i(t,\bm{x},(\alpha^i,\bm{\alpha}^{-i,\pi,m}(t,\bm{x})),\bm{p})$ when decoupled through fictitious play, or $\bm{h}^m(t,\bm{x},\bm{p}) = \bm{H}(t,\bm{x},\bm{\alpha}^{\pi,m}(t,\bm{x}),\bm{p})$ when decoupled through policy update. As stated in Section~\ref{section_bsde_sovling}, $\bY_0^{\pi, m+1}$ and $\bZ_{t_k}^{\pi, m+1}$ are paramterized by neural networks,
\begin{align*}
   \bm{Y}_0^{\pi,m+1} = \bm{\psi}_0^{m+1}(\bm{X}_0^\pi), \quad \bm{Z}_{t_k}^{\pi,m+1} = \bm{\phi}_k^{m+1}(\bm{X}_{t_k}^\pi), 
\end{align*}
where $\bm \psi_0^{m}$ and $\{\bm \phi_k^{m}\}_{k=0}^{N_T-1}$ are the optimal deterministic maps determined at stage $m$ that belongs to the hypothesis spaces (cf. Section~\ref{section_bsde_sovling}).
Afterwards, the $(m+1)^{th}$-stage policies defined on $\mathcal{T} \times \bR^n$ are updated by:
\begin{equation}
    \bm{\alpha}^{\pi,m+1}(t,\bm{x}) = \bm{a}(t,\bm{x},\bm{\alpha}^{\pi,m}(t,\bm{x}),\bm{\phi}^{m+1}(t,\bm{x})), \quad \forall (t,\bm{x}) \in \mathcal{T}\times \bR^n \label{bsde_discrete}
\end{equation}
where $\bm\phi^{m}(t_k,\bm{x}) = \bm \phi_k^m(\bm{x})$. 
Note that the above notation is simply a vector form of the deep BSDE method applied to system \eqref{eq:FP_BSDE} or \eqref{eq:PU_BSDE}. It does not change the decoupling nature of the deep fictitious play algorithm, i.e., each entry $(Y^{i, \pi, m}, Z^{i, \pi, m})$ in $(\bY^{\pi, m}, \bZ^{\pi, m})$ still solves its own problem.

Initially, we hope to apply Theorem \ref{thm_deep_bsde} to the BSDE system $\eqref{eq:FP_BSDE}$ and $\eqref{eq:PU_BSDE}$. By the game feature and the decoupling scheme, stage $m+1$'s estimates rely on the regularity of stage $m$'s policy $\balpha^m(t,\bx)$ (see definition in \eqref{def_alphaast}). Specifically, it requires the following condition, in addition to Assumption \ref{assumption 1 revise}:
\begin{equation} \label{regularity_alpha_m}
    |\bm{\alpha}^m(t_1,\bm{x}_1) - \bm{\alpha}^m(t_2,\bm{x}_2)|^2 \le L[|t_1 - t_2| + |\bm{x}_1 - \bm{x}_2|^2]. 
\end{equation}
However, in general, this property is not inherited from stage to stage. To circumvent this issue, we introduce a projection operator, which needs to be applied at the end of each stage. Along this line, we need the following assumption.

\begin{assu}\label{assumption gradientv}
The optimal policy $\bm{\alpha}^{*}$ as a function on $[0,T]\times \bR^n$ is Lipschitz with respect to $\bx$ and 1/2-H\"older continuous with respect to $t$, i.e.,
\begin{equation}
    |\bm{\alpha}^{*}(t_1,\bm{x}_1) - \bm{\alpha}^{*}(t_2,\bm{x}_2)|^2 \le L(|t_1-t_2| + |\bx_1 - \bx_2|^2).
\end{equation}
We also assume that $|\bm{\alpha}^\ast(t,\bm{x})|^2 \le L(1 + |\bm{x}|^2)$ for any $(t,\bm{x}) \in [0,T]\times\bR^n$. 
\end{assu}


Recall the set $\mc{T}$ containing all endpoints of the partition $\pi$ on $[0,T]$ from \eqref{def_numerics2}, for any $\eta \ge 0$ we define a Hilbert space on $\mc{T} \times \RR^n$:
\begin{equation}\label{def_hilbert_space}
    \mathcal{H}_\eta^\pi = \left\{\bm{\alpha}:\text{ measurable functions from }\mc{T}\times \bR^n \text{ to } \bR^{Nd_{\bm{\alpha}}}, \sum_{k=0}^{N_T-1}e^{\eta t_k}\EE|\bm{\alpha}(t_k,\bm{X}_{t_k}^\pi)|^2\Delta t_k < +\infty\right\},
\end{equation}
with norm $\|\bm{\alpha}\|_{\mathcal{H}_\eta^\pi}^2 := \sum_{k=0}^{N_T-1}e^{\eta t_k}\EE|\bm{\alpha}(t_k,\bm{X}_{t_k}^\pi)|^2\Delta t_k$,
and a subset 
\begin{equation*}
    \mathcal{N}^\pi = \left\{\bm{\alpha}:\mc{T}\times \bR^n \hookrightarrow \bR^{Nd_{\bm{\alpha}}}, |\bm{\alpha}(t_1,\bm{x}_1) - \bm{\alpha}(t_2,\bm{x}_2)|^2 \le L'[|t_1-t_2| + |\bx_1 - \bx_2|^2], |\bm{\alpha}(t,\bx)|^2 \le L'(1 + |\bx|^2)\right\}\footnote{Note that here $t_1$ and $t_2$ refer to two arbitrary points in $\mc{T}$, but not the first and second endpoints in the partition $\pi$. Later, depending on the contexts, the notation $t_1$ and $t_2$ in $|t_1 - t_2|$ may also refer to arbitrary points in $[0,T]$.}
\end{equation*}
with a constant $L' \ge L$. We claim that $\mathcal{N}^\pi$ is a closed convex subset of $\mathcal{H}_\eta^\pi$, so the projection $\mathrm{P}_{\mathcal{N}^\pi,\eta}$ from $\mathcal{H}_\eta^\pi$ to $\mathcal{N}^\pi$ exists and does not increase distance (cf., \cite[Chapter~5]{brezis2010functional}):
\begin{equation}\label{projection_contraction}
    \|\mathrm{P}_{\mathcal{N}^\pi,\eta} (f_1) - \mathrm{P}_{\mathcal{N}^\pi,\eta} (f_2)\|_{\mathcal{H}_\eta^\pi} \le \|f_1 - f_2\|_{\mathcal{H}_\eta^\pi} \quad \forall f_1, f_2 \in \mathcal{H}_\eta^\pi.
\end{equation}
The convexity of $\mathcal{N}^\pi$ is straightforward. To see the closedness, let $\{\bm{\alpha}_j\}_{j \ge 1}$ be a convergent sequence in $\mathcal{N}^\pi$, then there exists a subsequence of $\{\bm{\alpha}_j\}_{j \ge 1}$, denoted by $\{\bm{\alpha}_{j_k}\}_{k \ge 1}$, that converges to $\bm{\alpha}_\infty$ a.s.. Since
\begin{equation*}
    |\bm{\alpha}_{j_k}(t_1,\bm{x}_1) - \bm{\alpha}_{j_k}(t_2,\bm{x}_2)|^2 \le L'[|t_1-t_2| + |\bx_1 - \bx_2|^2], \quad |\bm{\alpha}_{j_k}(t,\bx)|^2 \le L'(1 + |\bx|^2),
\end{equation*}
and let $k \rightarrow +\infty$, we obtain
\begin{equation*}
    |\bm{\alpha}_{\infty}(t_1,\bm{x}_1) - \bm{\alpha}_{\infty}(t_2,\bm{x}_2)|^2 \le L'[|t_1-t_2| + |\bx_1 - \bx_2|^2], \quad |\bm{\alpha}_{\infty}(t,\bx)|^2 \le L'(1 + |\bx|^2), a.s..
\end{equation*}
Noticing that the functions in $\mathcal{N}^\pi$ and $\mathcal{H}_\eta^\pi$ are identical if they agree almost everywhere, we can conclude $\bm{\alpha}_{\infty} \in \mathcal{N}^\pi$ and $\mathcal{N}^\pi$ is closed.


Therefore, we are able to apply the projection operator $\mathrm{P}_{\cN^\pi, \eta}$ at the end of each stage, i.e., we change equation \eqref{bsde_discrete} to
\begin{align}
&\tilde{\bm{\alpha}}^{\pi,m+1}(t,\bm{x}) = \bm{a}(t,\bm{x},\bm{\alpha}^{\pi,m}(t,\bm{x}),\bm{\phi}^{m+1}(t,\bm{x})),
\label{eq:def_proj1}
\\
&\bm{\alpha}^{\pi,m+1} = \mathrm{P}_{\mathcal{N}^\pi,\eta}(\tilde{\bm{\alpha}}^{\pi,m+1}).
\end{align}
By this definition, the numerical solution ${\bm{\alpha}}^{\pi,m+1}(t,\bm{x})$ in fact implicitly depends on the value of $\eta$. We suppress this dependence for brevity of notation.
The main theorem in this section is as follows. 

\begin{thm}\label{thm_bsde_error_revise}
Under Assumptions \ref{assumption bsigma}--\ref{assumption gradientv}, let $\bm{\alpha}^{\pi,0}: \mc{T} \times \bR^n \mapsto \mathcal{A}$ be a measurable function satisfying:
\begin{equation}\label{alpha_0_regularity_revise}
    |\bm{\alpha}^{\pi,0}(t_1,\bm{x}_1) - \bm{\alpha}^{\pi,0}(t_2,\bm{x}_2)|^2 \le L'[|t_1- t_2| + |\bm{x}_1 - \bm{x}_2|^2], \quad |\bm{\alpha}^{\pi,0}(t,\bm
    {x})|^2 \le L'(1+|\bm{x}|^2).
\end{equation}
Then, for any $\epsilon \in (0,1-a_\alpha)$, assuming that $\eta> \eta_\epsilon$ in \eqref{def_hilbert_space}, where $\eta_\epsilon$ is a constant depending on $T$, $L$, $M$, $M'$ and $\epsilon$, we have
\begin{align}\label{eq_deep_bsde_error_revise}
    \sup_{t\in[0,T]} \EE&|\bm{Y}_t - \bm{Y}^{\pi,m}_{\pi(t)}|^2 + \int_{0}^{T} \EE\|\bm{Z}_t - \bm{Z}^{\pi,m}_{\pi(t)}\|^2_F\,\rmd t + \int_{0}^{T} \EE|\bm{\alpha}^*_t - \bm{\alpha}^{\pi,m}_{\pi(t)}|^2\,\rmd t  \notag \\
    & \le C(\eta,\epsilon)\left[\pinorm+  (a_\alpha + \epsilon)^m\int_{0}^{T}\EE\left|\bm{\alpha}^{*}_t - \bm{\alpha}_{\pi(t)}^{\pi,0}\right|^2 \rmd t + \sum_{j = 1}^{m} (a_\alpha + \epsilon)^{m-j} \EE\left|\bm{g}(\bm{X}_{T}^\pi) - \bm{Y}_{T}^{\pi,j}\right|^2\right],
\end{align}
where $(\bX^\pi_{t_k}, \bY^{\pi, m}_{t_k}, \bZ^{\pi, m}_{t_k})$ is defined in \eqref{eq_Xdiscrete}, $\balpha_{\pi(t)}^{\pi, m}  \equiv \bm{\alpha}_{t_k}^{\pi,m} = \bm{\alpha}^{\pi,m}(t_k,\bm{X}_{t_k}^\pi)$ for $t \in [t_k, t_{k+1})$, and $C(\eta,\epsilon) > 0$ is a constant depending only on $T$, $L$, $M$, $M'$, $K$, $L'$, $\EE|\bx_0|^2$, $\eta$ and $\epsilon$. Here $(\bX^\pi_{t_k}, \bY^{\pi, m}_{t_k}, \bZ^{\pi, m}_{t_k})$ represents either the discrete BSDE system using fictitious play or policy update in the decoupling step, depending on the definition of $\bm h$ in \eqref{eq_Xdiscrete}.

Next, with a slight abuse of notation (see Remark~\ref{rem_abuseofnotation} (2) for details), we define
$(\bm{Y}_t^m, \bm{Z}_t^m)$ as
\begin{equation*}
    \bY_t^m = [Y_t^{1, m}, \ldots, Y_t^{N, m}]\transpose, \quad \bZ_t^m = [Z_t^{1.m}, \ldots, Z_t^{N,m}]
\end{equation*}
with $(Y_t^{i,m}, Z_t^{i,m})$ from the BSDE systems \eqref{eq:FP_BSDE} in the setting of fictitious play or \eqref{eq:PU_BSDE} in the setting of policy update, in which the previous stage policy is given by the extension of the numerical approximation in time
\begin{equation}\label{alpha_interpolation_revise}
    \bm{\alpha}^m(t,\bm{x}) = \inf_{t'\in \mathcal{T}}[\bm{\alpha}^{\pi,m}(t',\bm{x}) + L'|t'-t|^{\frac{1}{2}}].
\end{equation}
Then we have the following inequality
\begin{align}\label{eq_loss_bound_revise}
    &\quad\,\inf_{\bm{\psi_0}^m \in \mathcal{N}_0', \{\bm{\phi}_k^m \in \mathcal{N}_k\}_{k=0}^{N_T-1}}\EE|\bm{g}(\bm{X}_{T}^\pi) - \bm{Y}_{T}^{\pi,m}|^2 \notag \\
    &\le C\left[\pinorm+  \inf_{\bm{\psi}_0^m \in \mathcal{N}'_0, \{\bm{\phi}_k^m \in \mathcal{N}_k\}_{k=0}^{N_T-1}}\left\{\EE|\bm{Y}_0^{m} - \bm{\psi}_0^m(\bm{x}_0)|^2 + \sum_{k = 0}^{N_T-1} \EE\|\hat{\bm{Z}}_{t_k}^{m} - \bm{\phi}_k^m(\bm{X}_{t_k}^\pi)\|_F^2\Delta t_k \right\}\right],
\end{align}
where $\cN_0^{'}$ and $\{\cN_k\}_{k=0}^{N_T-1}$ are hypothesis spaces for neural network architectures to approximate $\bY_0$ and $\bZ_{t_k}$,
$\hat{\bm{Z}}_{t_k}^m = (\Delta t_k)^{-1}\EE[\int_{t_k}^{t_{k+1}}\bm{Z}_t^m \rmd t | \bm{X}_{t_k}^\pi]$ and $C$ is a constant only depending on $T$, $L$, $M$, $M'$, $K$, $L'$ and $\EE|\bx_0|^2$.
We still refer $\cN_0'$ and $\cN_k$ as the hypothesis spaces for $\bm\psi_0^m: \RR^n \to \RR^N$, $\bm \phi_k^m:  \RR^n \to \RR^{k \times N}$, without introducing superscript $m$ to indicate the stage.
\end{thm}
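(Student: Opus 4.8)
The plan is to treat the two inequalities separately, establishing the loss bound \eqref{eq_loss_bound_revise} first, since it is essentially a direct invocation of Theorem~\ref{thm_deep_bsde}, and then building \eqref{eq_deep_bsde_error_revise} on top of it by an inductive one-step contraction across the fictitious-play stages. For \eqref{eq_loss_bound_revise} I would work with the vectorized continuous BSDE \eqref{eq_FPvector_revise} (the policy-update case is analogous, using \eqref{eq:PU_BSDE}), which carries a single driftless forward process across all $N$ sub-problems. The only nonroutine point is to verify that the stage-$m$ driver $\hat{\bm{H}}(t,\bm{x},\bm{\alpha}^m(t,\bm{x}),\bm{p})$ satisfies the three hypotheses of Theorem~\ref{thm_deep_bsde} uniformly in $m$. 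Lipschitz continuity in $(\bm{x},\bm{p})$ and $1/2$-H\"older continuity in $t$ follow by composing the regularity of $\phi$, $\bm{f}$ and $\bm{a}$ from Assumption~\ref{assumption 1 revise} with the regularity of the previous-stage policy $\bm{\alpha}^m$; crucially, because $\bm{\alpha}^m$ is the time-extension \eqref{alpha_interpolation_revise} of a function lying in $\mathcal{N}^\pi$, it is $L'$-Lipschitz in $\bm{x}$, $1/2$-H\"older in $t$, and of linear growth on all of $[0,T]\times\RR^n$, which also controls the values at the origin required by the third hypothesis. Once these are checked, \eqref{eq_deep_bsde_forward} applied to \eqref{eq_FPvector_revise} yields \eqref{eq_loss_bound_revise} directly, with $\hat{\bm{Z}}^m$ the averaged true stage-$m$ adjoint.

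For \eqref{eq_deep_bsde_error_revise}, I would organize the argument around three objects at each stage: the discrete numerical iterate $(\bm{Y}^{\pi,m},\bm{Z}^{\pi,m},\bm{\alpha}^{\pi,m})$, the continuous intermediate solution $(\bm{Y}^m,\bm{Z}^m)$ of \eqref{eq_FPvector_revise} driven by the time-extended numerical policy, and the true solution $(\bm{Y},\bm{Z},\bm{\alpha}^*)$. The target is the one-step recursion, in the weighted discrete norm $\|\cdot\|_{\mathcal{H}_\eta^\pi}$,
\begin{equation*}
\|\bm{\alpha}^{\pi,m}-\bm{\alpha}^*\|_{\mathcal{H}_\eta^\pi}^2 \le (a_\alpha+\eps)\,\|\bm{\alpha}^{\pi,m-1}-\bm{\alpha}^*\|_{\mathcal{H}_\eta^\pi}^2 + C(\eta,\eps)\bigl[\pinorm + \EE|\bm{g}(\bm{X}_T^\pi)-\bm{Y}_T^{\pi,m}|^2\bigr],
\end{equation*}
whose unrolling produces the geometric sum in \eqref{eq_deep_bsde_error_revise}; the accompanying $\bm{Y}$- and $\bm{Z}$-errors are then recovered from the analogues of \eqref{Y-control-revise} and \eqref{Z-control-revise}. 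To obtain the recursion I would start from the projection step \eqref{eq:def_proj1}: since $\bm{\alpha}^*$ restricted to $\mc{T}$ lies in $\mathcal{N}^\pi$ by Assumption~\ref{assumption gradientv} (using $L'\ge L$), it is a fixed point of $\mathrm{P}_{\mathcal{N}^\pi,\eta}$, so the non-expansiveness \eqref{projection_contraction} gives $\|\bm{\alpha}^{\pi,m}-\bm{\alpha}^*\|_{\mathcal{H}_\eta^\pi}\le\|\tilde{\bm{\alpha}}^{\pi,m}-\bm{\alpha}^*\|_{\mathcal{H}_\eta^\pi}$. Expanding $\tilde{\bm{\alpha}}^{\pi,m}$ and $\bm{\alpha}^*$ as fixed points of $\bm{a}$ along $\bm{X}^\pi$ and invoking the Lipschitz bound on $\bm{a}$ from Assumption~\ref{assumption 1 revise}(2) (whose $\bm{x}$-term vanishes, as both are evaluated along the same grid values) bounds the right side by $a_\alpha\|\bm{\alpha}^{\pi,m-1}-\bm{\alpha}^*\|_{\mathcal{H}_\eta^\pi}^2$ plus $L(1-a_\alpha)$ times a weighted discrete error between $\bm{\phi}^m(\cdot,\bm{X}^\pi)$ and the true adjoint $\Sigma^\transpose\nabla_{\bm{x}}\bm{V}$ evaluated along $\bm{X}^\pi$.

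The remaining work, and the \emph{main obstacle}, is controlling this $\bm{Z}$-error. I would split it by the triangle inequality into three pieces: (i) the pure discretization error $\bm{Z}^{\pi,m}-\bm{Z}^m$, bounded by $C[\pinorm+\EE|\bm{g}(\bm{X}_T^\pi)-\bm{Y}_T^{\pi,m}|^2]$ via \eqref{eq_deep_bsde_backward}; (ii) the continuous iteration error $\bm{Z}^m-\bm{Z}$, bounded through the $e^{\beta t}$-weighted estimate \eqref{Z-control-revise} of Theorem~\ref{thm_converge_revise} by a constant times the continuous policy error $\int_0^T e^{\beta t}\EE|\bm{\alpha}^{m-1}_t-\bm{\alpha}^*_t|^2\rmd t$; and (iii) the conversions between quantities evaluated along the continuous process $\bm{X}$ and the discrete process $\bm{X}^\pi$, and between the continuous-time integral and the weighted grid sum, each of which I expect to cost an extra $O(\pinorm)$ using the path regularity of $\bm{X},\bm{X}^\pi$ together with the uniform Lipschitz/H\"older regularity of $\bm{\alpha}^*$ and of the $\mathcal{N}^\pi$-iterates. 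The delicate bookkeeping is that \eqref{Z-control-revise} carries both $\delta\bm{\alpha}^{m-1}$ and $\delta\bm{\alpha}^m$ on its right-hand side, so---exactly as $\beta$ was taken large in the proof of Theorem~\ref{thm_converge_revise}---I would choose the weight exponent $\eta>\eta_\eps$ large enough that the self-referential term is absorbed and the net contraction factor is at most $a_\alpha+\eps<1$. Relating the continuous policy error to the discrete weighted policy error, again at cost $O(\pinorm)$ via the time-extension \eqref{alpha_interpolation_revise}, then closes the recursion and yields \eqref{eq_deep_bsde_error_revise}.
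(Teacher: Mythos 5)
Your plan matches the paper's proof in its overall architecture: the regularity \eqref{alpha_m_regularity_revise} of the time-extended policy feeding Theorem~\ref{thm_deep_bsde} to get \eqref{eq_loss_bound_revise}; a weighted-grid recursion for the policy error built on the non-expansiveness \eqref{projection_contraction} of the projection (using that $\balpha^\ast$ restricted to $\mc{T}$ lies in $\cN^\pi$); a $\bZ$-error split into a deep-BSDE term controlled by \eqref{eq_deep_bsde_backward} and a continuous-iteration term controlled by \eqref{Z_control_thm4_revise}; taking $\eta$ large to absorb the self-referential term; geometric unrolling; and recovery of the $\bY$/$\bZ$ bounds at the end. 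The one step that fails as stated is your choice of comparator in the fixed-point expansion at the grid points. You expand $\balpha^{\ast}(t_k,\bX_{t_k}^\pi)$ as the fixed point of $\bm a$ whose $\bm p$-argument is the true adjoint $\Sigma\transpose\nabla_{\bx}\bm V(t_k,\bX_{t_k}^\pi)$, so your conversion item (iii) must bound $\int_0^T\EE\|\bZ_t-\Sigma\transpose\nabla_{\bx}\bm V(\pi(t),\bX_{\pi(t)}^\pi)\|_F^2\,\rmd t$ by $O(\pinorm)$. That estimate requires Lipschitz-in-$\bx$ and H\"older-in-$t$ regularity of the decoupling field $(t,\bx)\mapsto\Sigma\transpose\nabla_{\bx}\bm V(t,\bx)$, which is not among the standing assumptions: Assumption~\ref{assumption gradientv} regularizes only $\balpha^{\ast}$ (and Lipschitzness of $\balpha(t,\bx,\bm p)$ in $\bm p$ tells you nothing about $\bm p$ itself, since $\balpha$ could be insensitive to $\bm p$), while Assumption~\ref{assumption lipschitz solution} gives only an a.s.\ bound on $\bZ_t$. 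So the ``path regularity of $\bX,\bX^\pi$ plus regularity of $\balpha^{\ast}$ and the $\cN^\pi$-iterates'' you invoke covers the $\balpha$-conversions but not this $\bZ$-conversion.

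The paper's device repairs exactly this point. It introduces the Euler-type scheme $(\bY^\pi,\bZ^\pi)$ for the true system \eqref{def_BSDE}, for which the classical estimate \eqref{X_discrete_error_revise} gives $\int_0^T\EE\|\bZ_t-\bZ^\pi_{\pi(t)}\|_F^2\,\rmd t\le C\pinorm$ \emph{without} any pointwise regularity of the $Z$-map, and it replaces your comparator by the grid fixed point $\balpha^{\pi,\ast}_{t_k}=\balpha(t_k,\bX_{t_k}^\pi,\bZ_{t_k}^\pi)$, so that the Lipschitz bound on $\bm a$ involves only $\bZ^\pi_{t_k}-\bZ^{\pi,m+1}_{t_k}$, which is estimated as in \eqref{thm4_eq5_revise}. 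The price — invisible in your sketch precisely because your (unjustified) comparator made the mismatch vanish — is that the recursion's target norm involves $\balpha^{\ast}$ while the contraction produces $\balpha^{\pi,\ast}$; the paper pays it with AM-GM factors $(1+\lambda^{-1})$ in \eqref{thm4_eq7_revise} and the $C(\eta)\pinorm$-size correction terms $\text{I}^{(2)}$ and $\text{III}$ (the latter from \eqref{thm4_eq6_revise}, where Assumption~\ref{assumption gradientv} \emph{is} the right tool), choosing $\lambda$ and $\eta_\eps$ jointly so that $(1+\lambda^{-1})^2\left[a_\alpha(1+\lambda^{-1})+C/(\eta-\beta_0)\right]\le a_\alpha+\eps$. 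With that substitution and the extra AM-GM bookkeeping, the rest of your plan goes through and coincides with the paper's argument.
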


\begin{rem}\label{rem_abuseofnotation}
We have the following remarks regarding Theorem~\ref{thm_bsde_error_revise}:
\begin{enumerate}[(1)]
    \item The interpretation of Theorem~\ref{thm_bsde_error_revise} is similar to that of Theorem~\ref{thm_deep_bsde}. The first inequality \eqref{eq_deep_bsde_error_revise} shows that the distance between the true solution of BSDE \eqref{def_BSDE} and the output of the deep BSDE method at stage $m$ can be controlled together by the mesh size, the error of the initial policy and the loss functions achieved at all the previous stages. The second inequality \eqref{eq_loss_bound_revise} states that the loss function of deep BSDE method at each stage is small if the approximation capability of
    the parametric function spaces ( ($\cN_0'$ and $\{\cN_k\}_{k=0}^{N_T -1}$) is high. The overall message conveyed in Theorem~\ref{thm_bsde_error_revise} is that, if the deep BSDE method can solve each sub-problem accurately enough, the deep fictitious play method will produce a strategy close to the Nash equilibrium.
    
    \item Note that there is a slight abuse of notation in the statement of Theorem~\ref{thm_bsde_error_revise}, since $(\bm Y_t^m, \bm Z_t^m)$ and $\balpha^m$ have already been introduced in Sections~\ref{sec_analysis_step1} and \ref{sec_step1}, as the theoretical solution from the decoupling step at stage $m$. In this section, to avoid  introducing further complicated notations, we still refer $(\bm Y_t^m, \bm Z_t^m)$ as the theoretical solution depending on $\balpha^{m-1}$, but $\balpha^{m-1}$ is the interpolation \eqref{alpha_interpolation_revise} of the deep BSDE solution $\balpha^{\pi, m-1}$ at stage $m-1$.  Nevertheless, the relation between $(\bY^m, \bZ^m)$ and the interpolated strategy $\balpha^{m-1}$ in theorem 3 remains the same as the relation between $(\bY^m, \bZ^m)$ and the exact strategy $\balpha^{m-1}$ in Theorem 2, thus some estimates follow using the same derivations as in the proof of Theorem \ref{thm_converge_revise}. In particular, we can obtain that there exists  positive constants $\beta_0$ and $C$ only depending on $T$, $L$, $M$ and $M'$ such that for any $\beta > \beta_0$,
    \begin{align}
        \sup_{0\le t \le T}\EE|\bY_t - \bY_t^{m}|^2 &\le C\int_{0}^{T}\EE|\balpha_t^* - \balpha^{m-1}(t,\bX_t)|^2\rmd t, \label{Y_control_thm4_revise}\\
        \int_{0}^{T}e^{\beta t}\EE\|\bZ_t - \bZ_t^{m}\|_F^2 &\le \frac{C}{\beta - \beta_0}\int_{0}^{T}e^{\beta t}\EE|\balpha_t^* - \balpha^{m-1}(t,\bX_t)|^2\rmd t \label{Z_control_thm4_revise}.
    \end{align}
    where $\balpha^{m-1}$ follows \eqref{alpha_interpolation_revise} and is the interpolation of strategies computed numerically at stage $m-1$. 
\end{enumerate}
\end{rem}

\begin{proof}
 Throughout this proof, we will use $C$ to denote a positive constant depending only on $T$, $L$, $M$, $M'$, $K$, $L'$ and $\EE|\bx_0|^2$ and use $C(\cdot)$ 
 to denote a positive constant depending on all the above constants and the arguments represented by $\cdot$. Both $C$ and $C(\cdot)$ may vary from line to line.
 
 Since $\bm{\alpha}^{\pi,m} \in \mathcal{N}^\pi$, with conditions \eqref{alpha_0_regularity_revise} and \eqref{alpha_interpolation_revise}, we obtain (cf. \cite{mcshane1934extension})
 \begin{equation}\label{alpha_m_regularity_revise}
      |\bm{\alpha}^m(t_1,\bm{x}_1) - \bm{\alpha}^m(t_2,\bm{x}_2)|^2 \le L'[|t_1 - t_2| + |\bm{x}_1 - \bm{x}_2|^2], \quad |\bm{\alpha}^m(t,\bx)| \le L'(1 + \sqrt{\|\pi\|} + |\bx|^2).
 \end{equation}
 Thus, the inequality \eqref{eq_loss_bound_revise} follows from Theorem \ref{thm_deep_bsde}.
 
 We next prove the inequality \eqref{eq_deep_bsde_error_revise}. As before, we will focus on proving the case of fictitious play in the sequel, and we claim that the statements also hold for policy update using a similar argument.
 
 Recalling the $\{\bX_{t_k}^\pi\}_{0\le k \le N_{T}-1}$ in \eqref{eq_Xdiscrete}, we then define the Euler-type scheme for BSDE system \eqref{def_BSDE} as follow:
 \begin{equation}
    \begin{dcases}
    \bY^\pi_{t_k} = \EE[\bY^\pi_{t_{k+1}}|X_{t_k}] + \miniH(t_k,\bX^\pi_{t_k},\bZ^\pi_{t_k})\Delta t_k, \quad \bY^\pi_{T} = g(\bX^\pi_{T}), \\
    \bZ^\pi_{t_k} = \frac{1}{\Delta t_k}\EE[(\bY^\pi_{t_{k+1}})\transpose\Delta \bW_k|\bX_{t_k}^\pi], \quad \forall k = 0,1, \dots,N_T-1.
    \end{dcases}
    \end{equation}
    With Assumptions \ref{assumption bsigma} and \ref{assumption 1 revise}, classical estimations of the discretization error gives
    \begin{equation}\label{X_discrete_error_revise}
        \sup_{0\le t\le T} [\EE|\bX_t - \bX_{\pi(t)}^\pi|^2 + \EE|\bY_t - \bY_{\pi(t)}^\pi|^2] + \int_{0}^{T}\EE\|\bZ_t - \bZ_{\pi(t)}^\pi\|_F^2 \rmd t \le C\|\pi\|.
    \end{equation}
    
For the $\bZ$-part error, we decompose it into two terms by the Cauchy-Schwartz inequality:
\begin{equation}\label{thm4_eq1_revise}
    \int_0^T\EE\|\bZ_t - \bZ_{\pi(t)}^{\pi,m+1}\|_F^2  \ud t \leq 2\int_0^T \EE\|\bZ_t - \bZ_t^{m+1}\|_F^2 \ud t  + 2 \int_0^T \EE\|\bZ_t^{m+1} - \bZ_{\pi(t)}^{\pi,m+1}\|_F^2 \ud t.
\end{equation}
A similar inequality can be written on the $\bY$-part error. For both of them, the second term is taken care by applying Theorem~\ref{thm_deep_bsde} to $(\bY_t^m, \bZ_t^m)$. More precisely, under Assumptions \ref{assumption 1 revise}--\ref{assumption gradientv} and \eqref{alpha_m_regularity_revise}, one has:
\begin{align}\label{deep_bsde_error_revise}
   \sup_{t\in [0,T]} \EE|\bm{Y}^{m+1}_t - \bm{Y}_{\pi(t)}^{\pi,m+1}|^2 + \int_{0}^T\EE\|\bm{Z}^{m+1}_t -\bm{Z}^{\pi,m+1}_{\pi(t)}\|_F^2 \rmd t \le C\left[\pinorm+ \EE|\bm{Y}_{T}^{\pi,m+1} - \bm{g}(\bm{X}_{T}^\pi)|^2\right],
\end{align}
where $(\bY_{\pi(t)}^{\pi, m}, \bZ_{\pi(t)}^{\pi, m})$ is defined in \eqref{eq_Xdiscrete}. For the first term in \eqref{thm4_eq1_revise}, we recall the inequality \eqref{Z_control_thm4_revise}  (choosing $\beta = \beta_0+1$) and deduce
\begin{align}\label{thm4_eq2_revise}
    \int_{0}^{T}\EE\|\bZ_t - \bZ_t^{m+1}\|_F^2\rmd t &\le C\int_{0}^{T}\EE|\balpha_t^{*} - \balpha^m(t,\bX_t)|^2\rmd t \notag\\
    &\le C[\int_{0}^{T}\EE|\balpha_t^{*} - \balpha_{\pi(t)}^{\pi,m}|^2\rmd t + \|\pi\|], 
\end{align}
where we have used 
\begin{equation}\label{thm4_eq3_revise}
\int_{0}^T\EE|\bm{\alpha}_{\pi(t)}^{\pi,m} - \bm{\alpha}^m(t,\bm{X}_t)|^2 \rmd t \le C\pinorm
\end{equation}
as a consequence of  \eqref{alpha_m_regularity_revise} and \eqref{X_discrete_error_revise}. Combining \eqref{thm4_eq1_revise}--\eqref{thm4_eq2_revise},  we claim that
\begin{equation}\label{thm4_eq4_revise}
    \int_{0}^T\EE\|\bZ_t - \bZ_{\pi(t)}^{\pi,m+1}\|_F^2\rmd t \le C[\int_{0}^T\EE|\balpha_t^{*} - \balpha_{\pi(t)}^{\pi,m}|^2\rmd t + \|\pi\| + \EE|\bY_T^{\pi,m+1} - \bm{g}(\bX_T^\pi)|^2].
\end{equation}
 Using equations \eqref{Y_control_thm4_revise}, \eqref{deep_bsde_error_revise} and \eqref{thm4_eq3_revise}, we can similarly obtain that
 \begin{equation}\label{thm4_eq4'_revise}
    \sup_{0\le t \le T}\EE|\bY_t - \bY_{\pi(t)}^{\pi,m+1}| \le C[\int_{0}^T\EE|\balpha_t^{*} - \balpha_{\pi(t)}^{\pi,m}|^2\rmd t + \|\pi\| + \EE|\bY_T^{\pi,m+1} - \bm{g}(\bX_T^\pi)|^2].
 \end{equation}
 
 We next derive an estimate that is useful in controlling the $\balpha$-part error. We first require $\eta_{\epsilon} > \beta_0$, then we have
\begin{align}\label{thm4_eq5_revise}
&\quad\sum_{k = 0}^{N_T - 1}e^{\eta t_k}\EE\|\bm{Z}_{t_k}^\pi - \bm{Z}_{t_k}^{\pi,m+1}\|_F^2 \Delta t_k \notag = \int_{0}^{T}e^{\eta \pi(t)}\EE\|\bm{Z}_{\pi(t)}^\pi - \bm{Z}_{\pi(t)}^{\pi,m+1}\|_F^2 \rmd t \notag \\
&\le 3 \int_{0}^{T}e^{\eta t}[\EE\|\bm{Z}_t - \bm{Z}_t^{m+1}\|_F^2+ \EE\|\bm{Z}_t - \bm{Z}_{\pi(t)}^\pi\|_F^2 + \EE\|\bm{Z}_t^{m+1} - \bm{Z}_{\pi(t)}^{\pi,m+1}\|_F^2] \rmd t \notag \\&\le3\int_{0}^{T}e^{\eta t}\EE\|\bm{Z}_t - \bm{Z}_t^{m+1}\|_F^2\rmd t + C(\eta)\left[\pinorm +\EE|\bm{Y}_{T}^{\pi,m+1} - \bm{g}(\bm{X}_{T}^\pi)|^2 \right] \notag\\
 &\le \frac{C}{\eta - \beta_0}\int_{0}^{T}e^{\eta t}\EE|\bm{\alpha}_t^{*} - \bm{\alpha}_{\pi(t)}^{\pi,m}|^2 \rmd t 
  + C(\eta) \left[\pinorm + \EE|\bm{Y}_{T}^{\pi,m+1} - \bm{g}(\bm{X}_{T}^\pi)|^2 \right],
\end{align}
where we have used the Cauchy-Schwart inequality, inequalities \eqref{Z_control_thm4_revise}, \eqref{X_discrete_error_revise}, \eqref{deep_bsde_error_revise} and \eqref{thm4_eq3_revise}. 

For the $\balpha$-part error, it suffices to control $\int_{0}^{T}e^{\eta t}\EE|\bm{\alpha}_t^{*} - \bm{\alpha}_{\pi(t)}^{\pi,m}|^2 \rmd t$ and we plan to 
\begin{enumerate}[(1)]
    \item\label{alpha_express_revise} express $\text{I} :=  \sum_{k=0}^{N_T - 1}e^{\eta t_k}\EE|\bm{\alpha}^{*} - \tilde{\bm{\alpha}}^{\pi,m+1}|^2(t_k,\bm{X}_{t_k}^\pi)\Delta t_k$ in terms of $\int_{0}^{T}e^{\eta t}\EE|\bm{\alpha}_t^{*} - \bm{\alpha}_{\pi(t)}^{\pi,m}|^2 \rmd t$;
    \item\label{alpha_projection_revise} obtain the estimate of $\text{II} :=  \sum_{k=0}^{N_T - 1}e^{\eta t_k}\EE|\bm{\alpha}^{*} - {\bm{\alpha}}^{\pi,m+1}|^2(t_k,\bm{X}_{t_k}^\pi)\Delta t_k \leq \text{I}$ by the property \eqref{projection_contraction} of $P_{\cN^\pi}$;
    \item\label{alpha_time_revise} take care the difference between the  $\balpha$-part error and II by III which is defined by:
    \begin{equation}\label{thm4_eq6_revise}
        \text{III} := \int_{0}^{T}e^{\eta t}\EE|\bm{\alpha}_t^{*} - \bm{\alpha}^{*}(\pi(t),\bm{X}_{\pi(t)}^\pi)|^2\rmd t \le C(\eta)\pinorm.
    \end{equation}
\end{enumerate}
Step \eqref{alpha_projection_revise} follows from the fact that $\balpha^{\pi, m+1}$ is defined as the projection of $\tilde \balpha^{\pi, m+1}$ into $\cN^\pi$, and that $\bm{\alpha}^{*} \in \mathcal{N}^\pi$ if viewed as a function on $\mc{T}\times \bR^n$. Step~\eqref{alpha_time_revise} is a consequence of Assumption~\ref{assumption gradientv} and \eqref{X_discrete_error_revise}. So it remains to address step~\eqref{alpha_express_revise}.
 
 To this end, we define $\bm{\alpha}_{t_k}^{\pi,*} = \bm{\alpha}(t_k,\bm{X}_{t_k}^\pi,\bm{Z}_{t_k}^\pi)$, then $\bm{\alpha}_{t_k}^{\pi,*} = \bm{a}(t_k,\bm{X}_{t_k}^{\pi},\bm{\alpha}_{t_k}^{\pi, \ast},\bm{Z}_{t_k}^\pi)$, and $\tilde{\bm{\alpha}}_{t_k}^{\pi,m} = \tilde{\bm{\alpha}}^{\pi, m}(t_k,\bm{X}_{t_k}^\pi)$, then $\tilde{\bm{\alpha}}_{t_k}^{\pi,m+1} = \bm a(t_k,\bm{X}_{t_k}^\pi,\bm{\alpha}_{t_k}^{\pi,m},\bm{Z}_{t_k}^{\pi,m+1})$. Thus, for any $\lambda > 0$, using the AM-GM inequality
\begin{align}
    \text{I} &\leq  (1 + \lambda^{-1}) \sum_{k=0}^{N_T - 1}e^{\eta t_k}\EE|\bm{\alpha}_{t_k}^{\pi,*} - \tilde{\bm{\alpha}}_{t_k}^{\pi,m+1}|^2\Delta t_k  + C(\lambda)\sum_{k=0}^{N_T - 1}e^{\eta t_k}\EE|\bm{\alpha}_{t_k}^{\pi,*} - \bm{\alpha}^*(t_k,\bm{X}_{t_k}^\pi)|^2\Delta t_k \notag \\
    &:= (1+\lambda^{-1})\text{I}^{(1)} + C(\lambda)\text{I}^{(2)}. \label{thm4_eq7_revise}
\end{align}
For term $\text{I}^{(1)}$, using \eqref{thm4_eq5_revise} and the Lipschitz condition of $\bm{a}$ in \eqref{eq:def_proj1}, we obtain 
\begin{align}\label{thm4_eq8_revise}
    \text{I}^{(1)}&\le \sum_{k = 0}^{N_T - 1}e^{\eta t_k}\left[L\EE\|\bm{Z}_{t_k}^\pi - \bm{Z}_{t_k}^{\pi,m+1}\|_F^2 + a_\alpha \EE|\bm{\alpha}_{t_k}^{\pi,*} - \bm{\alpha}_{t_k}^{\pi,m}|^2\right]\Delta t_k \notag \\
    & \le [a_\alpha(1+\lambda^{-1}) + \frac{C}{\eta -\beta_0}]\int_{0}^{T} e^{\eta t}\EE|\bm{\alpha}_t^{*} - \bm{\alpha}_{\pi(t)}^{\pi,m}|^2 \rmd t + C( \lambda,\eta) \left[\pinorm  + \EE|\bm{Y}_{T}^{\pi,m+1} - \bm{g}(\bm{X}_{T}^\pi)|^2 \right],
\end{align}
where we remove $C(\cdot)$'s dependence on $a_\alpha$ using $a_\alpha<1$ and we have also used
\begin{equation}
   \int_{0}^{T}e^{\eta t}\EE|\balpha_t^{*} - \balpha_{\pi(t)}^{\pi,*}|^2 = \int_{0}^{T}e^{\eta t}\EE|\balpha(t,\bX_t,\bZ_t) - \balpha(\pi(t),\bX_{\pi(t)}^\pi,\bZ_{\pi(t)}^\pi|^2 \rmd t \le C(\eta)\pinorm.
\end{equation}
Combining the last inequality with \eqref{thm4_eq6_revise} yields the estimate for term $\text{I}^{(2)}$:
\begin{equation}\label{thm4_eq9_revise}
   \text{I}^{(2)} \le C(\eta)\pinorm.
\end{equation}
Now plugging the estimates of $\text{I}^{(1)}$ and $\text{I}^{(2)}$ into \eqref{thm4_eq7_revise} and following step~\eqref{alpha_express_revise}--\eqref{alpha_time_revise}, we obtain:
\begin{align}
    &\quad \int_{0}^{T}e^{\eta t}\EE|\bm{\alpha}_t^{*} - \bm{\alpha}_{\pi(t)}^{\pi,m+1}|^2 \rmd t \notag\\
    & \leq (1+\lambda^{-1}) \int_{0}^{T}e^{\eta t}\EE|\bm{\alpha}^{*} - \balpha^{\pi,m+1}|^2(\pi(t),\bX_{\pi(t)}^\pi) \ud t + C(\lambda) \text{III}\\
    &\leq  (1+\lambda^{-1})e^{\eta \pinorm} \text{II} + C(\lambda) \text{III} \leq (1+\lambda^{-1})\text{II} + C(\lambda)\text{III} + C(\lambda,\eta)\pinorm\\
    &\leq  (1+ \lambda^{-1})^2\text{I}^{(1)} + C(\lambda)(\text{I}^{(2)} + \text{III}) + C(\lambda,\eta)\pinorm \notag \\
    &\le (1+\lambda^{-1})^2[a_\alpha (1+\lambda^{-1}) + \frac{C}{\eta - \beta_0}]\int_{0}^{T}\EE|\bm{\alpha}_t^{*} - \bm{\alpha}_{\pi(t)}^{\pi,m}|^2 \rmd t  + C(\lambda,\eta) \left[\|\pi\| + \EE|\bm{Y}_{T}^{\pi,m+1} - \bm{g}(\bm{X}_{T}^\pi)|^2 \right],
\end{align}
where we have used that
\begin{equation}
    e^{\eta \pinorm}\text{II} \le \text{II} + C(\eta)\pinorm \text{II} \le \text{II} + C(\eta)\pinorm \sum_{k=0}^{N_T - 1}(\EE|\bX_{t_k}^\pi|^2+1)\Delta t_k \le \text{II} +C(\eta)\pinorm.
\end{equation}
Let $\lambda$ and $\eta_\epsilon$ be large enough such that 
\begin{equation}
    (1+\lambda^{-1})^2[a_\alpha(1+\lambda^{-1}) +\frac{C}{\eta_\epsilon - \beta_0}]
    \le a_\alpha + \epsilon,
\end{equation}
then for $\eta > \eta_\eps$  we deduce
\begin{equation}
    \int_{0}^{T}e^{\eta t}\EE|\balpha_t^{*} -\balpha_{\pi(t)}^{\pi,m}|^2\rmd t \le C(\eta,\epsilon)[\pinorm + (a_\alpha + \epsilon)^m\int_{0}^Te^{\eta t}\EE|\balpha_t^{*}-\balpha_{\pi(t)}^{\pi,0}|^2\rmd t + \sum_{j=1}^m(a_\alpha+\epsilon)^{m-j}\EE|\bm{g}(\bX_T^\pi) - \bY_T^{\pi,j}|^2].
\end{equation}
Combining the above inequality with inequality \eqref{thm4_eq4_revise} and \eqref{thm4_eq4'_revise}, we obtain our result.
\end{proof}

Here are some remarks regarding Theorem~\ref{thm_bsde_error_revise} on its implication for numerical algorithms. The primary concern is how we can implement the projection mapping in practice if wished. Note that we choose 1/2-H\"older continuity in time in Assumption~\ref{assumption gradientv} for the generality of the result, although numerically it is challenging to guarantee the H\"older continuity.
If we replace that with the Lipschitz continuity in time, as a more restrictive condition, and instead consider the projection onto the space with the Lipschitz continuity, the estimates still hold. Accordingly, there are some practical approaches in the literature on ensuring the Lipschitz continuity of deep neural networks that can be introduced in our algorithms.
For instance, \cite{fazlyab2019efficient} gives an efficient and accurate estimation of Lipschitz constants for deep Neural networks, and \cite{pauli2020training} further extends it for robust training with regularization to keep the Lipschitz constant of neural networks small. In practice, Wasserstein GAN~\cite{arjovsky17wgan,gulrajani2017improved} has shown remarkable performance when using weight clipping as a loose but efficient way to impose the Lipschitz constraint. Therefore we can leverage similar techniques to keep the Lipschitz regularity during the training of the deep fictitious play. Also, notice that in the above, we define a single projection $\cN^\pi$ from the space of all players' strategies $\bm{\alpha}^{\pi,m}$, in consideration of the simplicity of the statement. One can also use the projection of ${\alpha}^{i,\pi,m}$ for each player with possibly easier numerical implementation and the same theoretical guarantee.

\subsection{On the $\epsilon$-Nash Equilibrium}\label{sec_analysis_step3}

This section combines the previous analysis, identifies the $\eps$-Nash equilibrium produced by the deep fictitious play, and evaluates its performance on the original game.

\begin{thm}\label{thm_epsnash}
Under Assumptions \ref{assumption bsigma}--\ref{assumption gradientv}, if $\hat \balpha$ is a policy function on $[0,T]\times \RR^n$ and Lipschitz in $\bx$, 
and 
\begin{equation}
\label{eq:eval_alpha_cond2}
 \int_{0}^{T}\EE|\bm{\alpha}^\ast - \hat \balpha|^2(t, \bX_t) \rmd t \leq \eps,
\end{equation}
where $\bX_t$ is the forward component of \eqref{def_BSDE}, then
\begin{enumerate}[(1)]
    \item Given $\hat\balpha$, the game values produced by $\hat\balpha$ are near the Nash equilibrium, i.e.,
    \begin{equation}\label{eq_Jrelation}
        |\tilde {\bm J}_0(\hat\balpha)- \bm J_0(\balpha^\ast)|^2 \leq C \eps, \text { and } |\bm J_0(\hat\balpha) -\bm J_0(\balpha^\ast)|^2 \leq C\eps,
    \end{equation}
    where $\tilde {\bm J}_0(\hat \balpha) = [\tilde J_0^1(\hat\balpha), \ldots, \tilde J_0^N(\hat\balpha)]$ with $\tilde J_0^i(\hat\balpha) := \inf_{\beta^i\in \mathbb{A}^i} J_0^i(\beta^i, \hat\balpha^{-i})$,
    $\bm J_0(\hat\balpha) = [J_0^1(\hat\balpha), \ldots, J_0^N(\hat\balpha)]$ with $J_0^i$ defined in \eqref{def_J}. Thus, there exists $0 < \eps_i \ll 1$ such that $\sum_{i=1}^N \eps_i^2 \le C\eps$ and 
    \begin{equation}\label{eq_espnash}
        J_0^i(\beta^i, \hat\balpha^{-i}) \geq  J_0^i(\hat \balpha) - \eps_i, \quad \forall \beta^i \in \mathbb{A}^i \text{ and } i \in \mc{I}.
    \end{equation}
    Here $C$ is a constant depending on $T$, $L$, $M$ and $M'$ which may vary from line to line in the proof. 
     \item The generated game paths $\bX_t^{\hat \balpha}$ are close to the paths $\bX_t^{\balpha^\ast}$ associated with the Nash equilibrium:
    \begin{equation}
        \EE\left[\left(\sup_{0 \leq t \leq T}\left|\bX_t^{\balpha^\ast} - \bX_t^{\hat\balpha}\right|\right)^2\right] \leq C(\lambda) \eps^\lambda,
    \end{equation}
    where $\bX_t^{\balpha^\ast}$ and $\bX_t^{\hat \balpha}$ follow \eqref{def_Xt} with the true Nash equilibrium strategy $\balpha^\ast$ and $\hat \balpha$. Here $\lambda$ is an arbitrary constant in $(0, 1)$, and $C(\lambda)$ is a constant depending on $T$, $L$, $M$, $M'$ and $\lambda$.
\end{enumerate}

\end{thm}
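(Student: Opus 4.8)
The plan is to realize each of the three relevant costs --- the Nash value $\bm J_0(\balpha^\ast)$, the policy-evaluation value $\bm J_0(\hat\balpha)$, and the best-response value $\tilde{\bm J}_0(\hat\balpha)$ --- as the time-zero value $\bm Y_0$ of a BSDE posed on the common \emph{driftless} forward process $\bX_t$ of \eqref{def_BSDE}, and then to compare these BSDEs. The device that puts all three on the same forward process is a Girsanov change of measure: since $b=\Sigma\phi$ (Assumption~\ref{assumption bsigma}) and $\phi$ is bounded (Assumption~\ref{assumption 1 revise}(\ref{assumption 1-phisigma revise})), for any Lipschitz feedback $\balpha$ the density $\tfrac{\rmd\QQ^{\balpha}}{\rmd\PP}=\exp(-\int_0^T\phi(s,\bX_s^\balpha,\balpha)\cdot\rmd\bW_s-\half\int_0^T|\phi|^2\rmd s)$ defines an equivalent measure under which $\bX_t^\balpha$ is driftless and hence has the same law as $\bX_t$. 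Writing the cost as a conditional expectation, applying martingale representation, and substituting $\rmd\bW_s=\rmd\tilde\bW_s-\phi\,\rmd s$, one identifies (each $\bm Y_0$ being a deterministic constant) $\bm J_0(\hat\balpha)=\bar{\bm Y}_0$ with driver $H^i(s,\bX_s,\hat\balpha(s,\bX_s),z)$, and $\bm J_0(\balpha^\ast)=\bm Y_0$ with driver $\miniH$, where along the solution $\check{H}^i(s,\bX_s,\bZ_s)=H^i(s,\bX_s,\balpha^\ast(s,\bX_s),Z_s^i)$. The best-response value $\tilde J_0^i(\hat\balpha)=\inf_{\beta^i}J_0^i(\beta^i,\hat\balpha^{-i})$ is the value of a single-agent control problem, so by the nonlinear Feynman--Kac formula $\tilde J_0^i(\hat\balpha)=\tilde Y_0^i$ with the minimized driver $\hat H^i(s,\bX_s,\hat\balpha^{-i}(s,\bX_s),z)$ from \eqref{def_Hhat}, and at the Nash solution $\hat H^i(s,\bX_s,\balpha^{\ast,-i}(s,\bX_s),Z_s^i)=\check{H}^i(s,\bX_s,\bZ_s)$.

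With all three BSDEs on the same $\bX_t$, I would prove \eqref{eq_Jrelation} by the same a priori estimate used in Theorem~\ref{thm_converge_revise} (It\^o's formula, Young's inequality, Gronwall). Comparing the Nash driver to either of the other two, the driver discrepancy is evaluated at the Nash $\bZ$; using $H^i=\phi\cdot p^i+f^i$ with $\phi,f^i$ Lipschitz in $\balpha$ (Assumption~\ref{assumption 1 revise}(1)), that $\hat H^i$ is Lipschitz in $\balpha^{-i}$ (inherited from the Lipschitz minimizer $\bm a$), and the bound $\|\bZ_t\|_S^2\le M'$ (Assumption~\ref{assumption lipschitz solution}), each discrepancy is dominated by $C|\balpha^\ast-\hat\balpha|^2(s,\bX_s)$. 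Hence both $|\bm J_0(\hat\balpha)-\bm J_0(\balpha^\ast)|^2$ and $|\tilde{\bm J}_0(\hat\balpha)-\bm J_0(\balpha^\ast)|^2$ are bounded by $C\int_0^T\EE|\balpha^\ast-\hat\balpha|^2(s,\bX_s)\rmd s\le C\eps$. The $\eps$-Nash property then follows by setting $\eps_i:=J_0^i(\hat\balpha)-\tilde J_0^i(\hat\balpha)\ge0$: since $J_0^i(\beta^i,\hat\balpha^{-i})\ge\tilde J_0^i(\hat\balpha)=J_0^i(\hat\balpha)-\eps_i$ for every $\beta^i$, inequality \eqref{eq_espnash} holds, and the triangle inequality gives $\sum_i\eps_i^2\le 2|\bm J_0(\hat\balpha)-\bm J_0(\balpha^\ast)|^2+2|\tilde{\bm J}_0(\hat\balpha)-\bm J_0(\balpha^\ast)|^2\le C\eps$.

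For part (2) I would work directly with the controlled SDEs for $\bX_t^{\balpha^\ast}$ and $\bX_t^{\hat\balpha}$ under $\PP$. Writing $\Delta_t=\bX_t^{\balpha^\ast}-\bX_t^{\hat\balpha}$, splitting the drift difference as $[b(\cdot,\bX^{\balpha^\ast},\balpha^\ast(\bX^{\balpha^\ast}))-b(\cdot,\bX^{\hat\balpha},\balpha^\ast(\bX^{\hat\balpha}))]+[b(\cdot,\bX^{\hat\balpha},\balpha^\ast(\bX^{\hat\balpha}))-b(\cdot,\bX^{\hat\balpha},\hat\balpha(\bX^{\hat\balpha}))]$, and using that $b,\Sigma$ are Lipschitz and $\balpha^\ast$ is Lipschitz (Assumption~\ref{assumption gradientv}), the Burkholder--Davis--Gundy and Gronwall inequalities reduce the claim to bounding the source term $S:=\EE\int_0^T|\balpha^\ast-\hat\balpha|^2(s,\bX_s^{\hat\balpha})\rmd s$. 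Here lies the main obstacle: the hypothesis \eqref{eq:eval_alpha_cond2} controls the same integrand along the \emph{driftless} $\bX_t$, not along the \emph{controlled} $\bX_t^{\hat\balpha}$. I would bridge this with $\QQ^{\hat\balpha}$, under which $\bX_t^{\hat\balpha}$ has the law of $\bX_t$, so that $\EE^{\QQ^{\hat\balpha}}[\xi]\le\eps$ for $\xi:=\int_0^T|\balpha^\ast-\hat\balpha|^2(s,\bX_s^{\hat\balpha})\rmd s$, and then pass back to $\PP$ by H\"older's inequality:
\[
S=\EE^\PP[\xi]\le\big(\EE^\PP[\tfrac{\rmd\QQ^{\hat\balpha}}{\rmd\PP}\xi]\big)^{\lambda}\big(\EE^\PP[(\tfrac{\rmd\QQ^{\hat\balpha}}{\rmd\PP})^{-\lambda/(1-\lambda)}\xi]\big)^{1-\lambda}\le C(\lambda)\,\eps^{\lambda}.
\]
The second factor is a finite constant because the Girsanov density and its inverse have all moments ($\phi$ bounded) and $\xi$ has all moments (linear growth of $\balpha^\ast,\hat\balpha$ and finite moments of the Lipschitz SDE $\bX^{\hat\balpha}$). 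This interpolation is exactly what forces the exponent $\lambda\in(0,1)$ rather than $1$, and completes $\EE[(\sup_{t}|\Delta_t|)^2]\le C(\lambda)\eps^\lambda$.
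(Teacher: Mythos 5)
Your part (1) is, in substance, the paper's own argument: the paper likewise represents $\tilde J_0^i(\hat\balpha)$ by the solution of \eqref{eq:FP_BSDE} with $\balpha^{-i,m}$ replaced by $\hat\balpha^{-i}$, and $J_0^i(\hat\balpha)$ by \eqref{eq:PU_BSDE} with $\balpha^m$ replaced by $\hat\balpha$ (the Girsanov de-drifting you invoke is exactly how the paper places the controlled costs on the driftless forward process of \eqref{def_BSDE}); it then bounds $|\EE[\bm Y_0^{\mathrm{FP}}]-\EE[\bm Y_0]|^2\le \EE|\bm Y_0^{\mathrm{FP}}-\bm Y_0|^2$ and closes with the Theorem~\ref{thm_converge_revise}-type a priori estimates \eqref{Y_control_thm4_revise}--\eqref{Z_control_thm4_revise}, using the same driver-difference bound $C|\balpha^\ast-\hat\balpha|^2$ (Lipschitzness of $\phi,\bm f,\bm a$ plus $\|\bZ_t\|_S^2\le M'$) and the same definition $\eps_i=J_0^i(\hat\balpha)-\tilde J_0^i(\hat\balpha)\ge 0$. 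One nitpick: since $\bx_0$ is random, $\bm Y_0$ is not a deterministic constant; the comparison must go through $|\EE[\cdot]|^2\le\EE|\cdot|^2$, which is what the paper writes and what your estimate delivers anyway.

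Part (2) is the same Girsanov-plus-H\"older interpolation as the paper's, in mirror image: the paper reduces via \cite[Theorem 3.2.4]{Zh:17} to $\EE[(\int_0^T (b^1-b^2)(t,\bX_t^{\balpha^\ast})\,\rmd t)^2]$ with the source evaluated along the \emph{Nash} path and de-drifts $\bX^{\balpha^\ast}$ using $\mc{Z}$ built from $\phi^{\balpha^\ast}$, while you evaluate along $\bX^{\hat\balpha}$ and de-drift with $\QQ^{\hat\balpha}$; both give $\eps^\lambda$. There is, however, one unguarded step in your version: you interpolate on $\xi=\int_0^T|\balpha^\ast-\hat\balpha|^2(s,\bX_s^{\hat\balpha})\,\rmd s$, which is unbounded, and your claim that ``$\xi$ has all moments'' invokes linear growth of $\hat\balpha$, which is not a hypothesis of the theorem --- Lipschitzness in $\bx$ gives $|\hat\balpha(t,\bx)|\le|\hat\balpha(t,0)|+\hat L|\bx|$ with nothing bounding $\hat\balpha(t,0)$. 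One can extract $\int_0^T|\hat\balpha(t,0)|^2\,\rmd t<\infty$ from \eqref{eq:eval_alpha_cond2} together with Assumption~\ref{assumption gradientv}, but then your constant $C(\lambda)$ also depends on $\hat L$ and on this integral, beyond the stated dependence on $T$, $L$, $M$, $M'$, $\lambda$. The paper sidesteps this entirely by interpolating on $\int_0^T(b^1-b^2)\,\rmd t$, which is \emph{bounded} because $b=\Sigma\phi$ with $\Sigma$ and $\phi$ uniformly bounded (Assumption~\ref{assumption 1 revise}\eqref{assumption 1-phisigma revise}): the high-moment H\"older factor is then an absolute constant, and the $L^2(\QQ)$ factor is $\le C\eps$ via the Lipschitzness of $\phi$ in $\balpha$ and \eqref{eq_XastestimateunderQ}. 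So either rerun your H\"older step on the bounded drift difference $b^1-b^2$ rather than on $|\balpha^\ast-\hat\balpha|^2$, or first cap the integrand using the boundedness of $\phi$; with that repair your proof coincides with the paper's, with constants as stated.
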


Immediately, we have the following corollary.
\begin{cor}
Under Assumptions~\ref{assumption bsigma}--\ref{assumption gradientv}, assuming the sub-problems \eqref{eq_Xdiscrete} are solved accurate enough at all stages, i.e.,
\begin{equation}\label{assump_epsnash}
    \EE|\bm g(X_T^\pi) - \bY_T^{\pi, j}|^2 \leq C \eps^2, \quad  \forall j \leq m,
\end{equation}
here C is a constant depending only on $T$, $L$, $M$, $M'$, $K$, $L'$ and $\EE|\bx_0|^2$.
Then, for sufficiently large $m$ and small mesh size $\pinorm$, the strategy $\balpha^m$ defined in \eqref{alpha_interpolation_revise}, as an interpolated policy based on the deep fictitious play, forms an $\eps$-Nash equilibrium.
\end{cor}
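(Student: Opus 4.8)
The plan is to chain Theorem~\ref{thm_bsde_error_revise}, which controls how close the numerical strategy $\balpha^{\pi,m}$ is to the true equilibrium strategy $\balpha^\ast$, with Theorem~\ref{thm_epsnash}, which converts such closeness into the $\eps$-Nash property. First I would fix the free rate parameter in Theorem~\ref{thm_bsde_error_revise}: choosing $\epsilon_0 = (1-a_\alpha)/2$ yields a contraction factor $q := a_\alpha + \epsilon_0 \in (0,1)$, and correspondingly a fixed $\eta > \eta_{\epsilon_0}$ and constant $C = C(\eta,\epsilon_0)$. With this choice, estimate \eqref{eq_deep_bsde_error_revise} reads
\begin{equation*}
\int_0^T \EE|\balpha^\ast_t - \balpha^{\pi,m}_{\pi(t)}|^2 \rmd t \le C\Big[\pinorm + q^m \!\int_0^T\! \EE|\balpha^\ast_t - \balpha^{\pi,0}_{\pi(t)}|^2\rmd t + \sum_{j=1}^m q^{m-j}\,\EE|\bm g(\bX_T^\pi) - \bY_T^{\pi,j}|^2\Big].
\end{equation*}

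Next I would bound the three terms on the right. The initial-error integral $A_0 := \int_0^T \EE|\balpha^\ast_t - \balpha^{\pi,0}_{\pi(t)}|^2\rmd t$ is finite, since both $\balpha^\ast$ (Assumption~\ref{assumption gradientv}) and $\balpha^{\pi,0}$ (hypothesis \eqref{alpha_0_regularity_revise}) have linear growth and $\EE|\bX^\pi_{t_k}|^2$, $\EE|\bX_t|^2$ are uniformly bounded, so $A_0 \le C$. For the loss sum, hypothesis \eqref{assump_epsnash} gives $\EE|\bm g(\bX_T^\pi) - \bY_T^{\pi,j}|^2 \le C\eps^2$ for every $j \le m$, so summing the geometric weights $\sum_{j=1}^m q^{m-j} \le (1-q)^{-1}$ bounds this contribution by $C\eps^2/(1-q)$. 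Consequently, by taking $m$ large enough that $q^m A_0 \le \eps^2$ and the mesh fine enough that $\pinorm \le \eps^2$, the whole right-hand side is $\le C\eps^2$, i.e.\ $\int_0^T \EE|\balpha^\ast_t - \balpha^{\pi,m}_{\pi(t)}|^2 \rmd t \le C\eps^2$.

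The remaining step is to pass from the discrete policy $\balpha^{\pi,m}_{\pi(t)}$, evaluated along the discrete forward process $\bX^\pi$, to the time-interpolated policy $\balpha^m$ of \eqref{alpha_interpolation_revise} evaluated along the continuous forward process $\bX_t$ of \eqref{def_BSDE}, since it is the latter quantity that enters hypothesis \eqref{eq:eval_alpha_cond2} of Theorem~\ref{thm_epsnash}. Here I would invoke the interpolation bound \eqref{thm4_eq3_revise}, namely $\int_0^T \EE|\balpha^{\pi,m}_{\pi(t)} - \balpha^m(t,\bX_t)|^2\rmd t \le C\pinorm$, together with the triangle inequality, to conclude $\int_0^T \EE|\balpha^\ast - \balpha^m|^2(t,\bX_t)\rmd t \le C\eps^2$. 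Moreover $\balpha^m$ is Lipschitz in $\bx$ by \eqref{alpha_m_regularity_revise}, so it is an admissible policy meeting the regularity requirement of Theorem~\ref{thm_epsnash}. Applying that theorem with $\hat\balpha = \balpha^m$ and accuracy $C\eps^2$ in \eqref{eq:eval_alpha_cond2}, estimate \eqref{eq_espnash} produces constants $\eps_i$ with $\sum_i \eps_i^2 \le C\cdot C\eps^2$, whence $\max_i \eps_i \le C\eps$; after absorbing the universal constant (equivalently, rescaling the threshold in \eqref{assump_epsnash}), $\balpha^m$ is an $\eps$-Nash equilibrium.

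The main obstacle I anticipate is the bookkeeping of constants and of the competing smallness requirements, so that all error sources collapse to a single order-$\eps^2$ strategy error: the loss sum is controlled uniformly by \eqref{assump_epsnash} but its geometric weights must be summed correctly, while $q^m A_0$ and $\pinorm$ must be driven below the same threshold by independently tuning $m$ and $\|\pi\|$. A secondary subtlety, easy to overlook, is that Theorem~\ref{thm_bsde_error_revise} measures the error of the \emph{discrete} policy along $\bX^\pi$, whereas the $\eps$-Nash criterion is stated for a genuinely Lipschitz policy evaluated along the continuous equilibrium dynamics $\bX_t$; reconciling these via \eqref{thm4_eq3_revise} and the regularity \eqref{alpha_m_regularity_revise} is precisely what makes $\balpha^m$, rather than $\balpha^{\pi,m}$, the object for which the conclusion holds.
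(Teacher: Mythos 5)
Your proposal is correct and follows exactly the route of the paper's (one-line) proof: the paper also derives the corollary by feeding estimate \eqref{eq_deep_bsde_error_revise} (with $m$ large and $\pinorm$ small, loss terms controlled via \eqref{assump_epsnash}) through the interpolation bound \eqref{thm4_eq3_revise} and the regularity \eqref{alpha_m_regularity_revise} into \eqref{eq_espnash} of Theorem~\ref{thm_epsnash}. Your write-up simply makes explicit the constant bookkeeping and the discrete-to-continuous passage that the paper leaves implicit, including the correct observation that the conclusion holds for the interpolated $\balpha^m$ rather than the discrete $\balpha^{\pi,m}$.
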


\begin{proof}
This follows from \eqref{eq_espnash} in Theorem~\ref{thm_epsnash}, with the assumptions satisfied according to equations \eqref{eq_deep_bsde_error_revise}, \eqref{alpha_m_regularity_revise} and \eqref{thm4_eq3_revise}. 
\end{proof}

\begin{rem}
As mentioned in Remark~\ref{rem_thm1}, there are still some theoretical issues unsolved regarding the approximation error and optimization of the deep BSDE method. The analysis of the deep fictitious play method has similar issues that remain open. To circumvent these issues and have a rigorous statement for $\eps$-Nash equilibrium, we introduce assumption~\eqref{assump_epsnash}. In practice, an observable proxy of \eqref{assump_epsnash} is the training loss of the deep BSDE method evaluated by its Monte Carlo counterpart.
\end{rem}

\begin{proof}[Proof of Theorem~\ref{thm_epsnash}]

The proof of item (1) relies on the estimates of BSDEs presented previously.
Let $(\bX_t, Y_t^{i, \emph{FP}}, Z_t^{i, \emph{FP}})$ solve \eqref{eq:FP_BSDE} with $\balpha^{-i,m}$ replaced by $\hat\balpha^{-i}$.
By the nonlinear Feynman-Kac formula (cf. \cite{PaPe:92,ElPeQu:97,PaTa:99}) and the associated HJB equation,
we have $\EE[Y_0^{i, \text{FP}}] = \tilde J_0^i(\hat\balpha)$.
Therefore, we have
\begin{equation}
\label{eq_Jrelation1}
     |\tilde {\bm J}_0(\hat\balpha)- \bm J_0(\balpha^\ast)|^2 = |\EE[\bm Y_0^\text{FP}] - \EE[\bm Y_0] |^2 \leq \EE|\bm Y_0^\text{FP} - \bm Y_0|^2.
\end{equation}
To bound the above term, we claim a stronger result:
\begin{equation}\label{eq_prenash}
    \sup_{0 \leq t \leq T} \EE |\bY_t^{\text{FP}} - \bY_t|^2 + \int_0^T \EE\|\bZ_t^{\text{FP}} - \bZ_t\|_F^2 \ud t \leq C \eps,
\end{equation}
where $(\bY_t, \bZ_t)$ solves \eqref{def_BSDE}, $\bY_t^\text{FP} = [Y_t^{1, \text{FP}}, \ldots, Y_t^{N, \text{FP}}]\transpose$, and $\bZ_t^\text{FP} = [Z_t^{i, \text{FP}}, \ldots, Z_t^{N, \text{FP}}]$, as a consequence of \eqref{Y_control_thm4_revise}, \eqref{Z_control_thm4_revise} and \eqref{eq:eval_alpha_cond2}.


If we let $(\bX_t, Y_t^{i, \emph{PU}}, Z_t^{i, \emph{PU}})$ solve \eqref{eq:PU_BSDE} with $\balpha^m$ replaced by $\hat \balpha$,
an argument similar to \eqref{eq_Jrelation1} and \eqref{eq_prenash} can give the second inequality in \eqref{eq_Jrelation}. 
Then \eqref{eq_espnash} is obtained by observing 
\begin{equation}
    J_0^i(\beta^i, \hat\balpha^{-i}) \geq \tilde J_0^i(\hat \balpha),  \quad  \forall \beta^i \in \mathbb{A}^i,
\end{equation}
and $|\tilde {\bm J}_0(\hat\balpha)- \bm J_0(\hat \balpha)|^2 \leq C\eps.$

We now prove item (2). Under the standing assumptions, we first observe that $b^1(t, \bx) := b(t, \bx, \balpha^
\ast(t, \bx)) = \Sigma(t, \bx) \phi(t, \bx, \balpha^\ast(t, \bx))$ and $b^2(t, \bx) := b(t, \bx, \hat\balpha(t, \bx)) = \Sigma(t, \bx) \phi(t, \bx, \hat\balpha(t, \bx))$  are Lipschitz in $\bx$. Thus $\bX_t^{\hat \balpha}$ is well-defined, and the standard estimates in SDE gives (cf. \cite[Theorem 3.2.4]{Zh:17})
 \begin{equation}\label{thm5_eq1}
        \EE\left[\left(\sup_{0 \leq t \leq T}\left|\bX_t^{\balpha^\ast} - \bX_t^{\hat\balpha}\right|\right)^2\right] \leq C \EE\left[\left(\int_0^T b^1(t, \bX_t^{\balpha^\ast}) - b^2(t, \bX_t^{\balpha^\ast}) \ud t\right)^2\right].
    \end{equation}
    
To bound the right-hand side above with the condition \eqref{eq:eval_alpha_cond2}, let us define a new probability measure $\QQ$, and denote by $\ZZ$ the Radon-Nikodym derivative:
\begin{equation}
    \frac{\ud\QQ}{\ud \PP} \equiv \ZZ := \exp\left\{-\int_0^T \phi_t^{\balpha^\ast} \cdot \ud \bW_t - \half \int_0^T |\phi_t^{\balpha^\ast}|^2\ud t \right\},
\end{equation}
where $\phi_t^{\balpha^\ast}:= \phi(t, X_t^{\balpha^\ast}, \balpha^\ast(t, X_t^{\balpha^\ast}))$.
By Assumptions~\ref{assumption 1 revise}, the Novikov's condition is fulfilled. Thus $\QQ \sim \PP$, and $\bm W_\cdot^\QQ := \bm W_\cdot + \int_0^\cdot \phi_t^{\balpha^\ast} \ud s$ is a standard Brownian motion under $\QQ$. In particular, the process $\bX_t^{\balpha^\ast}$ can be rewritten as $ \bX_t^{\balpha^\ast} = \bx_0 + \int_0^t \Sigma(s, \bX_s^{\balpha^\ast}) \ud \bW_s^\QQ$, and immediately from \eqref{eq:eval_alpha_cond2} we have
\begin{equation}\label{eq_XastestimateunderQ}
 \int_{0}^{T}\EE_\QQ|\bm{\alpha}^\ast - \hat \balpha|^2(t, \bX_t^{\balpha^\ast}) \rmd t \leq \eps,
\end{equation}
where we denote by $\EE_\QQ$ the expectation under measure $\QQ$. We next compute a bound for $\ZZ^{-\gamma}$ under $\QQ$, for $\gamma > 1$:
\begin{align}
    \EE_\QQ[\ZZ^{-\gamma}] & = \EE_\QQ\left[\exp\left\{\gamma\int_0^T \phi_t^{\balpha^\ast} \cdot \ud \bW_t^\QQ - \frac{\gamma}{2} \int_0^T |\phi_t^{\balpha^\ast}|^2\ud t \right\}\right] \notag\\
    & \leq   \EE_\QQ^{1/2} \left[\exp\left\{2\gamma\int_0^T \phi_t^{\balpha^\ast} \cdot \ud \bW_t^\QQ - 2\gamma^2 \int_0^T |\phi_t^{\balpha^\ast}|^2\ud t \right\}\right] \times \EE_\QQ^{1/2}\left[\exp\left\{ (2\gamma^2-\gamma) \int_0^T |\phi_t^{\balpha^\ast}|^2\ud t \right\}\right] \notag \\
    & \leq e^{CT(\gamma^2 -\half \gamma)},
\end{align}
where $\EE^p_\QQ$ denote $(\EE_\QQ[\cdot])^p$,  and we have used the Cauchy-Schwartz inequality and the boundedness of $\phi$ in Assumptions~\ref{assumption 1 revise}.
Therefore, we have
\begin{align}
    &\quad~\EE\left[\left(\int_0^T b^1(t, \bX_t^{\balpha^\ast}) - b^2(t, \bX_t^{\balpha^\ast}) \ud t\right)^2\right] \notag \\
    & \leq \EE^{1-\frac{1}{\gamma}}_\QQ\left[\left(\int_0^T b^1(t, \bX_t^{\balpha^\ast}) - b^2(t, \bX_t^{\balpha^\ast}) \ud t\right)^{\frac{2\gamma}{\gamma-1}}\right] \EE_\QQ^{\frac{1}{\gamma}}[\mc{Z}^{-\gamma}] \notag\\
    & \leq C(\gamma) \EE_\QQ^{1-\frac{1}{\gamma}}\left[\left(\int_0^T b^1(t, \bX_t^{\balpha^\ast}) - b^2(t, \bX_t^{\balpha^\ast}) \ud t\right)^2\right] \EE_\QQ^{1-\frac{1}{\gamma}}\left[\left(\int_0^T b^1(t, \bX_t^{\balpha^\ast}) - b^2(t, \bX_t^{\balpha^\ast}) \ud t\right)^\frac{2\gamma+2}{\gamma-1}\right] \notag \\
    & \leq  C(\gamma) \EE_\QQ^{1-\frac{1}{\gamma}}\left[\int_0^T |\balpha^\ast - \hat\balpha|^2(t, \bX_t^{\balpha^\ast}) \ud t\right] \leq C(\gamma)\eps^{1-\frac{1}{\gamma}}, \notag
\end{align}
where we have consecutively used H\"{o}lder's inequality, the estimate of $\EE_\QQ[\mc{Z}^{-\gamma}]$, the Lipschitz property of $\phi(t, \bx, \balpha)$, the boundedness of $\Sigma$ and $b$, and the estimate \eqref{eq_XastestimateunderQ}. Here $C(\gamma)$ is a constant depending on the $T$, $L$, $M$, $M'$ and $\gamma$, which may vary from line to line. With \eqref{thm5_eq1} and noticing $0 < 1-\frac{1}{\gamma} < 1$ we conclude. 
\end{proof}

In practice, the game is play on $\mc{T}$, but not $[0,T]$. 
Therefore, we will define a discrete version of the stochastic differential game \eqref{def_Xt}--\eqref{def_obj} and evaluate the performance of $\bm{\alpha}^{\pi,m}$ in section \ref{sec_step2} on the discrete game. To be precise, given a policy function $\bm{\alpha}^\pi$ on $\mc{T}\times \RR^n$, we define the discrete state process $\bm{X}_{t_k}^{\pi,\bm{\alpha}^\pi}$ and discrete individual cost functional $J_0^{\pi,i}(\bm{\alpha}^\pi)$ as follows
\begin{align}\label{eq_discreteX}
    &\bm{X}_{t_{k+1}}^{\pi,\bm{\alpha}^\pi} = \bm{X}_{t_k}^{\pi,\bm{\alpha}^\pi} + b(t_k,\bm{X}_{t_k}^{\pi,\bm{\alpha}^\pi},\bm{\alpha}^\pi(t_k,\bm{X}_{t_k}^{\pi,\bm{\alpha}^\pi}))\Delta t_k +\Sigma(t_k,\bm{X}_{t_k}^{\pi,\bm{\alpha}^\pi})\Delta \bm{W}_k, \quad \bm{X}_0^{\pi,\bm{\alpha}^{\pi}} = \bm{x}_0, \\
    &J_0^{\pi,i}(\bm{\alpha}^\pi) = \EE\left[\sum_{j = 0}^{N_T-1} f^i(t_k,\bm{X}_{t_k}^{\pi,\bm{\alpha}^\pi},\bm{\alpha}^\pi(t_k,\bm{X}_{t_k}^{\pi,\bm{\alpha}^\pi})) \Delta t_k + g^i(\bm{X}_{T}^{\pi,\bm{\alpha}^\pi})\right].
\end{align}
Note that when there are both $\pi$ and $\bm{\alpha}$ in the superscript of $\bm{X}$, it refers to the (discrete) process of the original state~\eqref{def_Xt}, and when there is only $\pi$ in the superscript, it refers to the (discrete) process $\bX_t =  \bx_0 + \int_0^t \Sigma(s, \bX_s) \ud \bm W_s.$ We then state a discrete version of Theorem \ref{thm_epsnash}.

\begin{thm}
Under Assumptions \ref{assumption bsigma}--\ref{assumption gradientv}, if $\hat \balpha^{\pi}$ is a policy function on $\mathcal{T}\times \RR^n$, Lipschitz in $\bx$ and H\"older continuous with t:
\begin{equation}
    |\hat{\bm{\alpha}}^{\pi}(t_1,\bm{x}_1) - \hat{\bm{\alpha}}^{\pi}(t_2,\bm{x}_2)|^2 \le L'[|t_1 - t_2| + |\bm{x}_1 - \bm{x}_2|^2],
\end{equation}
and 
\begin{equation}\label{eq:eval_alpha_cond3}
 \int_{0}^{T}\EE|\bm{\alpha}^\ast(t,\bm{X}_t) - \hat \balpha^{\pi}(\pi(t),\bm{X}_{\pi(t)}^{\pi})|^2 \rmd t \leq \eps,
\end{equation}
then
\begin{enumerate}[(1)]
    \item The value of the discrete game produced by $\hat{\bm{\alpha}}^{\pi}$ is close to the
    one associated with the Nash equilibrium of the continuous game, i.e.,
    \begin{equation}
        |\bm J_0^{\pi}(\hat\balpha^\pi) -\bm J_0(\balpha^\ast)|^2 \leq C[\eps+\|\pi\|],
    \end{equation}
    where $\bm J_0^{\pi}(\hat\balpha^{\pi}) = [J_0^{\pi,1}(\hat\balpha^{\pi}), \ldots, J_0^{\pi,N}(\hat\balpha^{\pi})]$. Moreover, there exists $0 < \eps_i \ll 1$ such that $\sum_{i=1}^N \eps_i^2 \leq C[\eps + \|\pi\|]$ and 
    \begin{equation}
        J_0^i(\beta^i, \hat\balpha^{\pi,-i}) \geq  J_0^{\pi,i}(\hat \balpha^{\pi}) - \eps_i, \quad \forall \beta^i \in \mathbb{A}^i \text{ and } i \in \mc{I}.
    \end{equation}
    Here $C$ is a constant depending on $T$, $L$, $M$, $M'$, $K$, $L'$ and $\EE|\bx_0|^2$, which may vary from line to line in the proof.
     \item The generated game paths $\bX_{t_k}^{\pi, \hat\balpha^\pi}$ are close to the paths $\bX_t^{\balpha^\ast}$ associated with the Nash equilibrium:
    \begin{equation}
        \EE\left[\left(\sup_{0 \leq t \leq T}\left|\bX_t^{\balpha^\ast} - \bX_{\pi(t)}^{\pi,\hat{\balpha}^\pi}\right|\right)^2\right] \leq C(\lambda) [\eps + \|\pi\|]^\lambda,
    \end{equation}
    where $\bX_t^{\balpha^\ast}$ follow \eqref{def_Xt} with the true Nash equilibrium strategy $\balpha^\ast$ and $\bX_{t_k}^{\pi,\hat{\balpha}^\pi}$ follow \eqref{eq_discreteX}. Here $\lambda$ is an arbitrary constant in $(0,1)$, and $C(\lambda)$ is a constant depending on $T$, $L$, $M$, $M'$, $K$, $L'$ and $\EE|\bx_0|^2$ and $\lambda$.
\end{enumerate}

\end{thm}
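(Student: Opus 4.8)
The plan is to reduce the discrete statement to the continuous Theorem~\ref{thm_epsnash} by lifting the grid policy $\hat\balpha^\pi$ to a continuous-time feedback control and then paying only an extra $O(\|\pi\|)$ for the time discretization of both the state process and the cost functionals. First I would extend $\hat\balpha^\pi$ from $\mc{T}\times\RR^n$ to a policy $\hat\balpha$ on $[0,T]\times\RR^n$ by the inf-convolution \eqref{alpha_interpolation_revise}, i.e. $\hat\balpha(t,\bx)=\inf_{t'\in\mc{T}}[\hat\balpha^\pi(t',\bx)+L'|t'-t|^{1/2}]$. Because $\hat\balpha^\pi$ is Lipschitz in $\bx$ and $1/2$-H\"older in $t$, this extension inherits both regularities with the same constants and, crucially, agrees with $\hat\balpha^\pi$ on the grid $\mc{T}$; in particular $\hat\balpha$ is Lipschitz in $\bx$ as required to invoke Theorem~\ref{thm_epsnash}.

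The next step is to verify that $\hat\balpha$ satisfies the continuous hypothesis \eqref{eq:eval_alpha_cond2} up to an $O(\|\pi\|)$ error. Writing $\balpha^\ast(t,\bX_t)-\hat\balpha(t,\bX_t)=[\balpha^\ast(t,\bX_t)-\hat\balpha^\pi(\pi(t),\bX_{\pi(t)}^\pi)]+[\hat\balpha(\pi(t),\bX_{\pi(t)}^\pi)-\hat\balpha(t,\bX_t)]$, the first bracket integrates to at most $\eps$ by assumption \eqref{eq:eval_alpha_cond3} (using that $\hat\balpha$ coincides with $\hat\balpha^\pi$ on $\mc{T}$), while the second is controlled by the H\"older-in-$t$ and Lipschitz-in-$\bx$ bounds on $\hat\balpha$ together with $|t-\pi(t)|\le\|\pi\|$ and the path-regularity estimate $\sup_t\EE|\bX_t-\bX_{\pi(t)}^\pi|^2\le C\|\pi\|$ from \eqref{X_discrete_error_revise}; this yields $\int_0^T\EE|\balpha^\ast-\hat\balpha|^2(t,\bX_t)\rmd t\le C[\eps+\|\pi\|]$. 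Applying Theorem~\ref{thm_epsnash} with $\eps$ replaced by $\eps+C\|\pi\|$ then gives $|\tilde{\bm J}_0(\hat\balpha)-\bm J_0(\balpha^\ast)|^2,\ |\bm J_0(\hat\balpha)-\bm J_0(\balpha^\ast)|^2\le C[\eps+\|\pi\|]$ and the sup-path estimate $\EE[(\sup_t|\bX_t^{\balpha^\ast}-\bX_t^{\hat\balpha}|)^2]\le C(\lambda)[\eps+\|\pi\|]^\lambda$.

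It remains to discretize. Since $\hat\balpha$ agrees with $\hat\balpha^\pi$ on $\mc{T}$, the scheme \eqref{eq_discreteX} is precisely the Euler--Maruyama scheme for the controlled SDE \eqref{def_Xt} driven by the feedback $\hat\balpha$, whose drift $\tilde b(t,\bx):=\Sigma(t,\bx)\phi(t,\bx,\hat\balpha(t,\bx))$ is Lipschitz in $\bx$ and $1/2$-H\"older in $t$ by Assumption~\ref{assumption 1 revise} and the regularity of $\hat\balpha$. The standard strong Euler estimate then gives $\EE[\sup_t|\bX_t^{\hat\balpha}-\bX_{\pi(t)}^{\pi,\hat\balpha^\pi}|^2]\le C\|\pi\|$, and a companion estimate for the cost, using the Lipschitz property of $f^i,g^i$ and a Riemann-sum bound, gives $|\bm J_0^\pi(\hat\balpha^\pi)-\bm J_0(\hat\balpha)|^2\le C\|\pi\|$. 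Combining with the previous paragraph by the triangle inequality proves the value estimate of item (1), and, since $\|\pi\|\le\|\pi\|^\lambda\le[\eps+\|\pi\|]^\lambda$ for $\|\pi\|\le 1$ and $\lambda\in(0,1)$, item (2). The $\eps$-Nash property follows as in the continuous proof: from $J_0^i(\beta^i,\hat\balpha^{\pi,-i})\ge\tilde J_0^i(\hat\balpha)$, $|\bm J_0(\hat\balpha)-\tilde{\bm J}_0(\hat\balpha)|^2\le C[\eps+\|\pi\|]$ and $|\bm J_0^\pi(\hat\balpha^\pi)-\bm J_0(\hat\balpha)|\le C\sqrt{\|\pi\|}$, one sets $\eps_i$ accordingly so that $\sum_i\eps_i^2\le C[\eps+\|\pi\|]$.

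The main obstacle I anticipate is the time-discretization bookkeeping in the last paragraph: establishing $\EE[\sup_t|\bX_t^{\hat\balpha}-\bX_{\pi(t)}^{\pi,\hat\balpha^\pi}|^2]\le C\|\pi\|$ for the closed-loop system requires care because the scheme freezes the coefficient at $(t_k,\bX_{t_k}^\pi)$ while the drift depends on $\bx$ only through the non-smooth feedback $\hat\balpha$, so one must work with Lipschitz-in-$\bx$ and $1/2$-H\"older-in-$t$ regularity alone and close the estimate via Gronwall together with Doob's maximal inequality. By contrast, the change of measure needed to pass from the controlled process to the driftless $\bX_t$ appearing in the hypotheses is already absorbed inside Theorem~\ref{thm_epsnash} and need not be redone here.
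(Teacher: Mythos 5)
Your proposal is correct and follows essentially the same route as the paper's proof: the same inf-convolution extension \eqref{alpha_interpolation_revise} of $\hat\balpha^\pi$ to $[0,T]\times\RR^n$, the same bound $\int_0^T\EE|\balpha^\ast-\hat\balpha|^2(t,\bX_t)\,\rmd t\le C[\eps+\|\pi\|]$ obtained from \eqref{eq:eval_alpha_cond3}, the regularity of $\hat\balpha$, and \eqref{X_discrete_error_revise}, the same strong Euler estimate $\EE[(\sup_t|\bX_t^{\hat\balpha}-\bX_{\pi(t)}^{\pi,\hat\balpha^\pi}|)^2]\le C\|\pi\|$ with the resulting cost bound $|\bm J_0^\pi(\hat\balpha^\pi)-\bm J_0(\hat\balpha)|^2\le C\|\pi\|$, and finally an application of Theorem~\ref{thm_epsnash} with $\eps$ replaced by $C[\eps+\|\pi\|]$. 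No gaps; your anticipated obstacle (the closed-loop Euler estimate under Lipschitz-in-$\bx$, $1/2$-H\"older-in-$t$ coefficients) is handled in the paper exactly as you suggest, by citing the standard strong-convergence result.
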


\begin{proof}
Let $\hat{\balpha}(t,\bm{x}) = \inf_{t'\in \mathcal{T}}[\hat{\balpha}^{\pi}(t',\bm{x}) + L'|t'-t|^{\frac{1}{2}}]$, then with an argument similar to that in Theorem \ref{thm_bsde_error_revise}, $\hat{\balpha}$ satisfies:
\begin{equation}\label{alpha_regularity_thm6}
    |\hat{\bm{\alpha}}(t_1,\bm{x}_1) - \hat{\bm{\alpha}}(t_2,\bm{x}_2)|^2 \le L'[|t_1 - t_2| + |\bm{x}_1 - \bm{x}_2|^2].
\end{equation}
By \eqref{X_discrete_error_revise}, \eqref{eq:eval_alpha_cond3} and \eqref{alpha_regularity_thm6}, we have
\begin{align}\label{alpha_hat_distance}
    &\quad\int_{0}^{T}\EE|\bm{\alpha}^{*} - \bm{\hat{\alpha}}|^2(t,\bm{X}_t) \rmd t
    \notag \\
    &\le 2\left[\int_{0}^{T}\EE|\bm{\alpha}^\ast(t,\bm{X}_t) - \hat \balpha^{\pi}(\pi(t),\bm{X}_{\pi(t)}^{\pi})|^2 \rmd t + \int_{0}^{T}\EE|\bm{\hat{\alpha}}(t,\bm{X}_t) - \hat \balpha^{\pi}(\pi(t),\bm{X}_{\pi(t)}^{\pi})|^2 \rmd t\right] \notag \\
    &\le C[\|\pi\| + \eps].
\end{align}
By the regularity of $\hat{\bm{\alpha}}$ (c.f. \eqref{alpha_regularity_thm6}) and the standard estimates of the Euler Scheme of SDE (c.f. \cite{kloeden2013numerical}), we can obtain
\begin{equation}\label{X_alpha_discrete}
    \EE\left[\left(\sup_{0 \leq t \leq T}\left|\bX_t^{\hat{\balpha}} - \bX_{\pi(t)}^{\pi,\hat{\balpha}^\pi}\right|\right)^2\right] \le C\|\pi\|.
\end{equation}
Observing that
\begin{align}
    &\quad \bm{J}_0^{\pi}(\hat{\bm{\alpha}}^\pi) - \bm{J}_0(\hat{\bm{\alpha}}) \notag\\
    & = \EE\left[\int_{0}^{T}[\bm{f}(\pi(t),\bX_{\pi(t)}^{\pi,\hat{\balpha}^\pi},\bm{\hat{\alpha}}^{\pi}(\pi(t),\bX_{\pi(t)}^{\pi,\hat{\balpha}^\pi})) - \bm{f}(t,\bm{X}_t^{\hat{\bm{\alpha}}},\hat{\bm{\alpha}}(t,\bm{X}_t^{\hat{\bm{\alpha}}}))]\rmd t + \bm{g}(\bX_{T}^{\pi,\hat{\balpha}^\pi}) - \bm{g}(\bX_T^{\hat{\balpha}})\right],
\end{align}
with \eqref{alpha_regularity_thm6}, \eqref{X_alpha_discrete} and Assumption \ref{assumption 1 revise}, one has
\begin{equation}\label{J_distance_discrete}
    |\bm{J}_0^{\pi}(\hat{\bm{\alpha}}^\pi) - \bm{J}_0(\hat{\bm{\alpha}})|^2 \le C\|\pi\|. 
\end{equation}
Finally, with \eqref{alpha_hat_distance}, \eqref{X_alpha_discrete}, \eqref{J_distance_discrete} and Theorem \ref{thm_epsnash}, we reach all the conclusions of this theorem.
\end{proof}

\section{Numerical Examples}\label{sec_numerics}

We supplement our theoretical analysis by numerical examples. We shall mainly focus on how deep BSDE performs when combined with policy update strategy in the decoupling step, {\it i.e.}, when solving \eqref{eq:PU_BSDE}. We refer readers to \cite{HaHu:19} for the numerical performance when the deep BSDE method is used to solve the sub-problems derived from fictitious play, whose results are similar to those presented here from policy update.
The example we present here is an inter-bank game concerning the systemic risk \cite{CaFoSu:15}.
Assume an inter-bank market with $N$ banks,
and denote by $X^i_t\in \RR$ the log-monetary reserves of bank $i$ at time $t$. Its dynamics are modeled as the following diffusion processes,
\begin{equation}
\label{eq:ex1_dynamics}
\ud X_t^i = [a(\overline X_t - X_t^i)   + \alpha_t^i ] \ud t  + \sigma \left(\rho \ud W_t^0 + \sqrt{1-\rho^2} \ud W_t^i\right), \quad \overline X_t = \frac{1}{N}\sum_{i=1}^N X_t^i, \quad i \in \mc{I}.
\end{equation}
Here $a(\overline X_t - X_t^i)$ represents the rate at which bank $i$ borrows from or lends to other banks in the lending market, while $\alpha_t^i$  denotes its control rate of cash flows to a central bank. The standard Brownian motions $\{W_t^i\}_{i=0}^N$ are independent, in which $\{W_t^i, i\geq 1\}$ stands for the idiosyncratic noises and $W_t^0$ denotes the systemic shock, or so-called common noise in the general context. To describe the model in the form of \eqref{def_Xt}, we concatenate the log-monetary reserves $X_t^i$ of $N$ banks to form $\bX_t^{\balpha}=[X_t^1,\dots,X_t^N]\transpose$.
The associated drift term and diffusion term are defined as
\begin{equation}
\label{eq:example1_driftdiff}
b(t, \bm x, \balpha)=
[a(\bar x - x^1) + \alpha^1, 
\ldots, 
a(\bar x - x^N) + \alpha^N
]\transpose\in \RR^{N\times1}
, \quad \bar x = \frac{1}{N}\sum_{i=1}^N x^i, \end{equation}
\begin{equation}
\label{eq:example1_sigma}
\Sigma(t, \bm x)=
\begin{bmatrix}
     \sigma\rho & \sigma\sqrt{1-\rho^2} & 0 & \cdots & 0\\
     \sigma\rho & 0 & \sigma\sqrt{1-\rho^2} & \cdots & 0\\
     \vdots & \vdots & \vdots & \ddots & \vdots\\
     \sigma\rho & 0 & 0 & \cdots &\sigma\sqrt{1-\rho^2}
\end{bmatrix}\in \RR^{N\times (N+1)},
\end{equation}
and $\bW_t = (W_t^0, \ldots, W_t^N)$ is $(N+1)$-dimensional. The cost functional \eqref{def_J} that player $i$ wishes to minimize has the form
\begin{equation}
f^i(t, \bm{x},\balpha) = \half (\alpha^i)^2 - q \alpha^i(\bar x - x^i) + \frac{\epsilon}{2}(\bar x - x^i)^2,  \quad g^i(\bm{x}) = \frac{c}{2}(\bar x - x^i)^2.
\end{equation}
Under such specifications, the solution of this game admits a quadratic form whose coefficient functions can be solved from a Riccati equation.
We direct the interested readers to \cite{CaFovosSu:15,HaHu:19} for the detailed interpretation of this model and the explicit characterization of the solution.

\begin{figure}[!ht]
    \centering
    \includegraphics[width=0.95\textwidth]{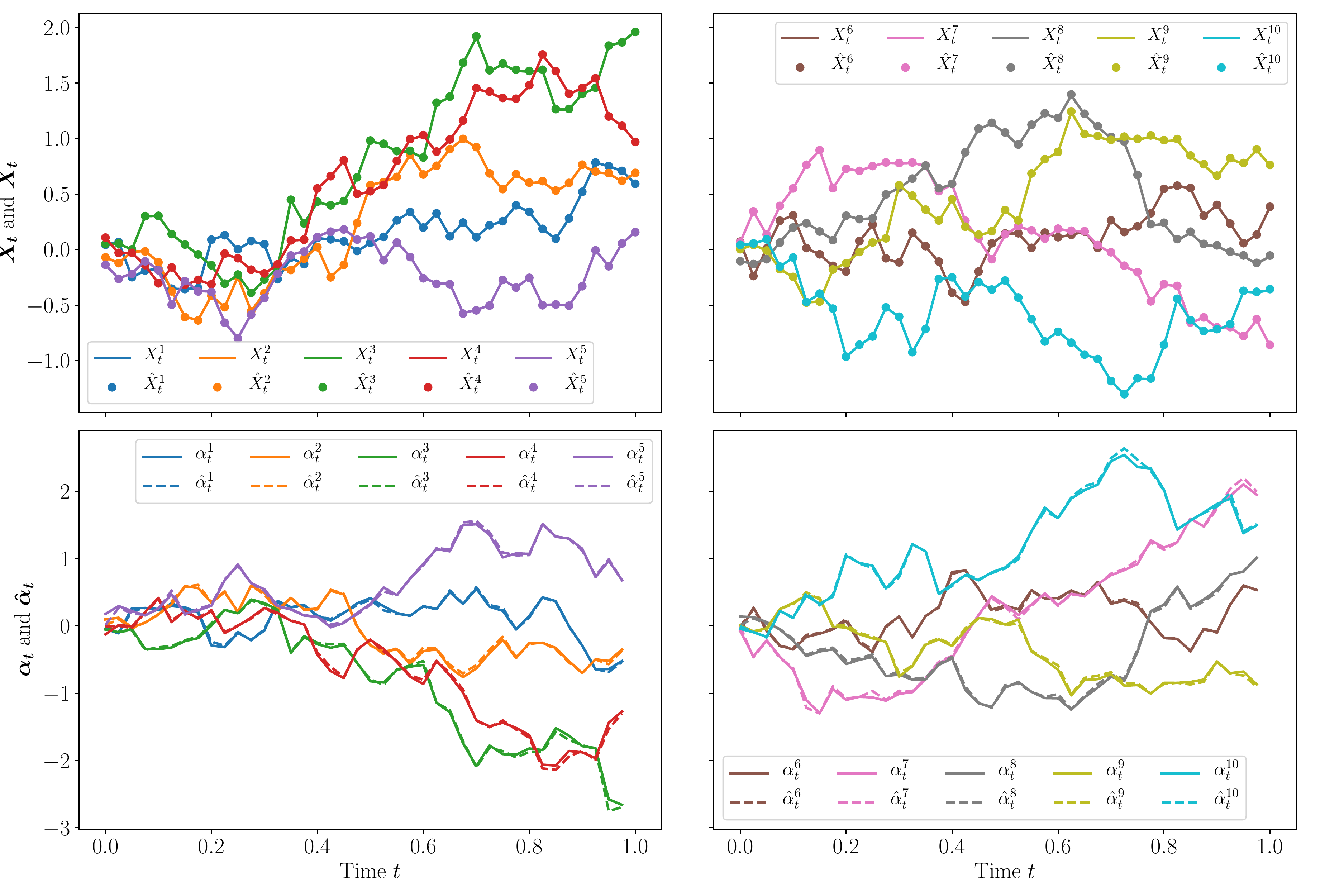}
    \caption{A sample path for each player of the inter-bank game when $N=10$, obtained from decoupling the problem by policy update and solving the sub-problems with the Deep BSDE method. Top: the optimal state process $X_t^i$ (solid lines) and its approximation $\hat{X}_t^i$ (circles) provided by the optimized neural networks, under the same realized path of Brownian motion. Bottom: comparisons of the strategies $\alpha_t^i$ and $\hat{\alpha}_t^i$ (dashed lines).}
    \label{fig:bsde}
\end{figure}

In our numerical computation, we choose $N=10$, $T=1$, and
\begin{equation}\label{def_parameters}
    a = 0.1,\quad  q = 0.1, \quad c = 0.5,\quad  \epsilon = 0.5,\quad  \rho = 0.2, \quad \sigma = 1.
\end{equation}
We discretize the time $[0, T]$ into $N_T=40$ intervals and specify the hypothesis spaces $\cN_0^{i'}$ and $\{\cN_k^i\}_{k=0}^{N_T-1}$ for each player $i$ as follows.
We parametrize $V^{i}(0,\bx)$ (the superscript $m$ is dropped again for simplicity) with a neural network, denoted by $\text{Net}_{V}(\bx)$, as the space $\cN_0^{i'}$ of $Y_0^i$.
We also parametrize $\nabla_{\bm{x}}V^{i}(t,\bx)$ with another network, denoted by $\text{Net}_{\nabla V}(t,\bx)$,
as the space of $\{\cN_k^i\}_{k=0}^{N_T-1}$, in which the timestamp $t_k$ is provided as another dimension of the input. This choice is in consistence with our theoretical analysis in Theorem~\ref{thm_bsde_error_revise} involving the linear interpolation of the strategy in time.

In this numerical example, we use fully-connected feedforward networks to instantiate both $\text{Net}_{V}(\bx)$ and $\text{Net}_{\nabla V}(t,\bx)$.
Since the problem is homogeneous among all the players, we let two networks share the same parameters among all the players and only solve player 1's problem for updating the parameters.
Both networks consist of three hidden layers with a width of $40$. The activation function is hyperbolic tangent, and the technique of batch normalization~\cite{batchnorm15} is adopted right after each linear transformation and before activation. For simplicity, we do not impose the projection procedure discussed in Section~\ref{sec_step2}.

Regarding the optimization, the loss function in Deep BSDE is differentiable with respect to the network parameters. We can use backpropagation to derive the gradient of the loss function with respect to all the parameters in the neural networks and employ stochastic gradient descent (SGD) to optimize all the parameters. 
In this work, we use Adam optimizer~\cite{Kingma2015adam} to optimize network parameters with constant learning rate 5e-4 and batch size $256$. The parameters are updated by 30000 steps in total.

To implement the algorithm, we also need to specify the distribution of the initial state $\bx_0$ in \eqref{eq:FP_BSDE} or \eqref{eq:PU_BSDE}. We follow the same way as in~\cite{HaHu:19}. Each component of $\bx_0$, as the initial state of each player, is sampled independently from the uniform distribution on $[-\delta_0, \delta_0]$. $\delta_0$ is chosen such that in the following process driven by the optimal policy $\balpha^*$
\begin{equation*}
\ud \bm X_t^{\bm \alpha^*} = b(t, \bm X_t^{\bm \alpha^*}, \bm \alpha^*(t, \bm X_t^{\bm \alpha^*})) \ud t + \Sigma(t, \bm X_t^{\bm \alpha^*}) \ud \bm W_t, \quad \bm X_0 = \bx_0,
\end{equation*}
the standard deviation of $\{\bX_t\}_{t=0}^T$ is approximately $\delta_0$. In other words, $\delta_0$ is determined as a fixed-point.
The rationale for such a procedure is to make sure the data generated for the learning is representative enough in the whole state space. 

Note that our technical assumptions are not strictly satisfied in this example, since $\bm{f}, \bm{g}$ are not Lipschitz continuous, $\phi$ is not uniform bounded, and $T$ is not sufficiently small.
Nevertheless, our numerical results show that the deep BSDE method can solve this game when combined with policy update. In particular, we compute the relative error of controls (proportional to the gradient of value function):
\begin{align*}
    \text{RSE} = \frac{\sum_{\substack{0 \leq k \leq N_T-1 \\ 1 \leq j \leq J}} \left(\nabla_{\bx} V^1(t_k, \bm x_{t_k}^{(j)}) - \nabla_{\bx} \widehat V^1(t_k,\bm x_{t_k}^{(j)})\right)^2}{\sum_{\substack{0 \leq k \leq N_T-1\\1 \leq j \leq J}} \left(\nabla_{\bx} V^1(t_k, \bm x_{t_k}^{(j)})- \overline {\nabla_{\bx} V}^1\right)^2},
\end{align*}
where 
$V^1$ is the true solution (of player 1),
$\hat{V}^1$ is the prediction from the neural networks,
and $\bar V^1 ~(\emph{resp.~} \overline {\nabla_{\bx} V}^1)$ is the average of $V^1 ~(\emph{resp.~} \nabla_{\bx}V^1)$ evaluated at all the indices $j, k$.
To compute the relative error, we generate $J=256$ ground truth sample paths $\{\bm x_{t_k}^{(j)}\}_{k=0}^{N_T-1}$ using Euler scheme based on  \eqref{eq:PU_BSDE} and the true optimal strategy. Note that the superscript ${(j)}$ here does not mean the player index, but the $j^{th}$ path for all players. The final RSE for the Deep BSDE method is
0.27\%.
Figures~\ref{fig:bsde} presents one sample path for each player of the optimal state process $X_t^i$ and the optimal control $\alpha_t^i$ \emph{vs.} their approximations $\hat{X}_t^i, \hat{\alpha}_t^i$, with good agreement.

\section{Conclusion}\label{sec_conclusion}
In this paper, we established the theoretical foundation for the deep fictitious play algorithm for finding Markovian Nash equilibrium proposed in \cite{HaHu:19}. Specifically, we proved the following three things: 1. The solutions of the decoupled sub-problems, if solved exactly and repeatedly, converge to the true Nash equilibrium; 2. The numerical error of each sub-problem, if solved by deep BSDE individually and repeatedly, converges to zero subject to the universal approximation capacity of neural networks; 3. The interpolated strategy based on the deep fictitious play algorithm forms a $\eps$-Nash equilibrium, after sufficiently many stages $m$ and with sufficiently small mesh $\pinorm$. We also generalize the algorithm by proposing a new approach to decouple the games, and present a numerical example in the end to show the empirical convergence beyond the technical assumptions used in the theorems. In the future, with this solidly established theory of deep fictitious play, we aim to study the competitions in Finance, including P2P lending platforms from the Fintech industry and insurance markets. We also plan to generalize the theory and algorithm to stochastic differential games with delays. 

\section*{Data availability statement}
Data sharing not applicable to this article as no new data were created or analysed during the this study.

\bibliographystyle{plain}
\bibliography{Reference}

\appendix
\section{Supporting Propositions for Assumption \ref{assumption lipschitz solution}}
We prove the following propositions in this section.
\begin{prop}
\label{prop:assump3_smalltime}
Under Assumptions \ref{assumption bsigma} and \ref{assumption 1 revise}, there exists a constant $T_0 > 0$ only depending on $L$ and $M$, such that Assumption \ref{assumption lipschitz solution} is satisfied when $T \le T_0$.
\end{prop}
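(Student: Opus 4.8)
The plan is to verify the two components of Assumption~\ref{assumption lipschitz solution} separately: the existence of an adapted $L^2$-integrable solution of \eqref{def_BSDE} (part~1), and the essential bound $\|\bm Z_t\|_S^2 \le M'$ (part~2). The feature that forces the small-time restriction is that, although $\phi$ is bounded and $\balpha(t,\bx,\bm p)$ is Lipschitz, the driver $\miniH(t,\bx,\bm p) = \bm H(t,\bx,\balpha(t,\bx,\bm p),\bm p)$ contains the term $\phi(t,\bx,\balpha(t,\bx,\bm p))\cdot p^i$, whose Lipschitz constant in $\bm p$ grows linearly in $|\bm p|$. Hence $\miniH$ is only \emph{locally} Lipschitz in $\bm p$, and the classical wellposedness theory does not apply until one first produces an a priori bound on $\bm Z$.

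First I would truncate. Fix a radius $R>0$ (to be chosen as a function of $L$ and $M$) and define a modified driver $\miniH_R$ that agrees with $\miniH$ on $\{\|\bm p\|_S \le R\}$ and is globally Lipschitz in $(\bx,\bm p)$, with Lipschitz constant $L_R$ depending only on $L$, $M$ and $R$ (for instance, replace $p^i$ in the product $\phi\cdot p^i$ by its projection onto the ball of radius $R$). Since $\Sigma$ is bounded and Lipschitz and $\miniH_R$ is globally Lipschitz, the decoupled forward-backward system with driver $\miniH_R$ admits a unique adapted solution $(\bX,\bm Y^R,\bm Z^R)$ satisfying the integrability in part~1 (cf.~\cite[Theorem~4.3.1]{zhang2017backward}), and it is Markovian with a decoupling field $u^R$, i.e.\ $\bm Y_t^R = u^R(t,\bX_t)$ and $\bm Z_t^R = \Sigma\transpose(t,\bX_t)\nabla_{\bx} u^R(t,\bX_t)$.

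Next I would bound the spatial Lipschitz constant of $u^R$ uniformly in $t$. Starting the forward SDE from two points $\bx,\bx'$ at time $t$ and writing $\delta\bX,\delta\bm Y,\delta\bm Z$ for the corresponding differences, the boundedness and Lipschitz continuity of $\Sigma$ give $\EE\sup_{t\le s\le T}|\delta\bX_s|^2 \le e^{C(T-t)}|\bx-\bx'|^2$, while the weighted a priori estimate for the backward equation (the same $e^{\beta s}$ computation as in the proof of Theorem~\ref{thm_converge_revise}, with $\beta$ chosen larger than a multiple of $L_R$) yields
\begin{equation*}
|u^R(t,\bx)-u^R(t,\bx')|^2 \le \Big[L + C(L,M,R)\,(T-t)\Big]e^{C(L,M,R)(T-t)}\,|\bx-\bx'|^2 .
\end{equation*}
The bracket tends to $L$ as $T-t\to 0$, so there is a constant $T_0$, depending only on $L$, $M$ and $R$, such that for $T\le T_0$ the Lipschitz constant of $u^R(t,\cdot)$ is at most $\sqrt{2L}$ for every $t\in[0,T]$. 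Consequently $\|\bm Z_t^R\|_S \le \|\Sigma\|_S\,\|\nabla_{\bx}u^R\| \le \sqrt{2LM}$, so $\|\bm Z_t^R\|_S^2 \le 2LM$.

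Finally I would close the truncation: choosing $R := \sqrt{2LM}$ (or marginally larger) makes the preceding bound read $\|\bm Z_t^R\|_S \le R$, so on the solution path $\miniH_R$ coincides with $\miniH$; hence $(\bX,\bm Y^R,\bm Z^R)$ solves the original system \eqref{def_BSDE} and fulfils both parts of Assumption~\ref{assumption lipschitz solution} with $M' = 2LM$. Since $R$ is fixed in terms of $L$ and $M$ \emph{before} the Lipschitz estimate is carried out, all the constants $L_R$, $C(L,M,R)$, and ultimately $T_0$ and $M'$, depend only on $L$ and $M$, as claimed. I expect the main obstacle to be the rigorous justification of the representation $\bm Z_t^R = \Sigma\transpose\nabla_{\bx}u^R$ and of the a.s.\ (rather than $\mathrm dt\otimes\mathrm d\PP$-a.e.) bound on $\bm Z^R$ for a decoupling field that is only Lipschitz in $\bx$; this is where a Malliavin-calculus / Ma--Zhang-type representation argument, combined with the path regularity of $\bm Z$, must be invoked.
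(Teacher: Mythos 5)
Your proposal is correct and follows essentially the same route as the paper: truncate the linear-in-$\bm p$ term of $\miniH$ at a radius fixed in advance in terms of $L$ and $M$, bound the Lipschitz constant of the decoupling field by a two-point BSDE estimate (this is exactly the paper's Lemma~\ref{lem1}, which yields $\|\bm Z_t\|_S^2 \le \overline{M}(g_x+T)e^{2\overline{L}T}$), and choose $T_0$ so small that the bound falls below the truncation level, rendering the truncation inactive. The obstacle you flag at the end --- justifying $\bm Z_t = \Sigma\transpose\nabla_{\bx}\bm u(t,\bX_t)$ and the a.s.\ bound for a merely Lipschitz decoupling field --- is resolved in the paper by invoking the Ma--Zhang representation theorem (\cite[Theorem~3.1]{ma2002representation}), just as you anticipated.
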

\begin{prop}\label{prop:unique}
Under Assumptions \ref{assumption lipschitz solution}, the BSDE systems \eqref{def_BSDE} has a unique adapted solution satisfying that
\begin{equation}
    \EE \left[\sup_{0\le t \le T}(|\bm{X}_t|^2 +|\bm{Y}_t|^2) + \int_{0}^T \|\bm{Z}_t\|_F^2 \rmd t\right] < + \infty.
\end{equation}
\end{prop}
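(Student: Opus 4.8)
The plan is to split the statement into uniqueness of the forward component and uniqueness of the backward component, the latter being the substantive part. First I would observe that the forward equation in \eqref{def_BSDE}, namely $\bX_t = \bx_0 + \int_0^t \Sigma(s,\bX_s)\,\rmd\bW_s$, is autonomous and does not involve $(\bm Y,\bm Z)$; since $\Sigma$ is Lipschitz and uniformly bounded by Assumption~\ref{assumption 1 revise}, classical SDE theory gives a unique strong solution $\bX$ with $\EE[\sup_{0\le t\le T}|\bX_t|^2]<\infty$. Hence any two adapted solutions of \eqref{def_BSDE} share the same forward process $\bX$, and it remains only to prove that the backward pair $(\bm Y,\bm Z)$ is unique within the class of solutions satisfying $\EE[\sup_{t}|\bm Y_t|^2 + \int_0^T\|\bm Z_t\|_F^2\,\rmd t]<\infty$. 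Existence of such a solution, together with the asserted integrability, is exactly what Assumption~\ref{assumption lipschitz solution} provides.

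The main obstacle is that the driver $\miniH(t,\bx,\bm p)=\bm H(t,\bx,\balpha(t,\bx,\bm p),\bm p)$ is only \emph{locally} Lipschitz in $\bm p$: the term $\phi(t,\bx,\balpha(t,\bx,\bm p))\cdot p^i$ produces, upon varying $\bm p$, a factor that grows linearly in $|\bm p|$, so $\miniH$ has quadratic growth in $\bm z$ and the standard Lipschitz BSDE uniqueness theorem does not apply directly. The resolution is to exploit the a priori bound $\|\bm Z_t\|_S^2\le M'$ from Assumption~\ref{assumption lipschitz solution}, which holds for the solution $(\bm Y^1,\bm Z^1)$ asserted there. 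Given a second solution $(\bm Y^2,\bm Z^2)$ with the same $\bX$ and terminal value $\bm g(\bX_T)$, set $\delta\bm Y = \bm Y^1-\bm Y^2$, $\delta\bm Z=\bm Z^1-\bm Z^2$, $\balpha^j=\balpha(t,\bX_t,\bm Z_t^j)$, and $\delta\bm H_t = \miniH(t,\bX_t,\bm Z_t^1)-\miniH(t,\bX_t,\bm Z_t^2)$. Writing its $i$-th component as
\begin{equation*}
\delta H_t^i = \phi(t,\bX_t,\balpha^2)\cdot(Z_t^{i,1}-Z_t^{i,2}) + [\phi(t,\bX_t,\balpha^1)-\phi(t,\bX_t,\balpha^2)]\cdot Z_t^{i,1} + [f^i(t,\bX_t,\balpha^1)-f^i(t,\bX_t,\balpha^2)],
\end{equation*}
the key point is that the coefficient $\phi(t,\bX_t,\balpha^2)$ of $\delta Z^i$ is bounded by the uniform boundedness of $\phi$ in Assumption~\ref{assumption 1 revise}, while attaching the potentially unbounded factor to $Z_t^{i,1}$ (rather than $Z_t^{i,2}$) lets me bound it by $\sqrt{M'}$. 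Combining this with the Lipschitz estimates of $\phi$, $f^i$, and $\balpha(t,\bx,\bm p)$ in $\bm p$ yields $|\delta\bm H_t|^2\le C\|\delta\bm Z_t\|_F^2$ for a constant $C$ depending only on $L$, $M$, and $M'$; that is, the driver difference is controlled \emph{linearly} in $\|\delta\bm Z_t\|_F$, exactly as if the driver were globally Lipschitz.

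With this linear bound in hand, I would reproduce the energy estimate used in the proof of Theorem~\ref{thm_converge_revise}. Since $\delta\bm Y_T=0$, applying Itô's formula to $e^{\beta t}|\delta\bm Y_t|^2$, taking expectations (the stochastic integral is a genuine martingale thanks to the $L^2$-integrability and the boundedness of $\Sigma$), and using $2\delta\bm H_s\cdot\delta\bm Y_s\le \beta^{-1}|\delta\bm H_s|^2+\beta|\delta\bm Y_s|^2$ gives
\begin{equation*}
e^{\beta t}\EE|\delta\bm Y_t|^2 + \int_t^T e^{\beta s}\EE\|\delta\bm Z_s\|_F^2\,\rmd s \le \frac{C}{\beta}\int_t^T e^{\beta s}\EE\|\delta\bm Z_s\|_F^2\,\rmd s .
\end{equation*}
Choosing $\beta=2C$ makes the right-hand side absorbable into the left, forcing $\EE|\delta\bm Y_t|^2=0$ for every $t$ and $\int_0^T\EE\|\delta\bm Z_s\|_F^2\,\rmd s=0$, whence $\bm Y^1\equiv\bm Y^2$ and $\bm Z^1=\bm Z^2$ almost everywhere. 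This establishes uniqueness. The only delicate step is the one isolated above — converting the quadratically growing driver into a linear-in-$\delta\bm Z$ estimate by placing the unbounded $\bm Z$-factor on the bounded solution; once that is done, the remainder is the standard Gronwall/absorption scheme mirroring the computation in Theorem~\ref{thm_converge_revise}.
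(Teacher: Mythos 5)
Your proposal is correct and follows essentially the same route as the paper's proof: compare an arbitrary $L^2$ solution with the bounded-$\bm Z$ solution furnished by Assumption~\ref{assumption lipschitz solution}, decompose $\delta\bm H$ so that the bounded $\phi$ multiplies $\delta\bm Z$ while the Lipschitz difference of $\phi$ multiplies the $\bm Z$-component known to satisfy $\|\bm Z_t\|_S^2\le M'$, obtaining $|\delta\bm H_t|^2\le C\|\delta\bm Z_t\|_F^2$, and conclude by a standard energy estimate. The only cosmetic differences are that you make the forward-SDE uniqueness explicit and use the $e^{\beta t}$-weighted absorption argument where the paper applies Young's inequality with $\lambda=L_z$ and Gr\"onwall directly; both are equivalent.
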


First, we need the following classical estimation (cf. \cite[Theorem~5.2.2]{zhang2017backward}), which give a uniform bound of the $Z$-component of BSDEs. We state and provide the proof of this lemma here to compute an exact upper bound for the convenience of later analysis.

\begin{lem}\label{lem1}
Let $(\bm{X}_t,\bm{Y}_t,\bm{Z}_t)$, the adapted process taking value in $\bR^n \times \bR^N \times \bR^{k\times N}$, be the solution of the following BSDE system:
\begin{equation}\label{lem1-BSDE}
    \begin{dcases}
    \bm{X}_t = \bm{x}_0 + \int_{0}^{t}\Sigma(s,\bm{X}_s)\rmd \bm{W}_s, \\
    \bm{Y}_t = \bm{g}(\bm{X}_T) + \int_{t}^T \bm{F}(s,\bm{X}_s,\bm{Z}_s)\rmd s - \int_{t}^T(\bm{Z}_s)\transpose \rmd \bm{W}_s,
    \end{dcases}
\end{equation}
where the coefficients $\Sigma$, $\bm{F}$ and $\bm{g}$ satisfy the following condition:
\begin{align*}
    |\bm{g}(\bm{x}_1) - \bm{g}(\bm{x}_2)|^2 &\le g_x|\bm{x}_1 - \bm{x}_2|^2, \\
    \|\Sigma(t,\bm{x}_1) - \Sigma(t,\bm{x}_2)\|_F^2 &\le \overline{L}|\bm{x}_1 - \bm{x}_2|^2, \\
    |\bm{F}(t,\bm{x}_1,\bm{p}_1) - \bm{F}(t,\bm{x}_2,\bm{p}_2)|^2 &\le \overline{L}[|\bm{x}_1 - \bm{x}_2|^2 + \|\bm{p}_1 - \bm{p}_2\|_F^2], \\
    \|\Sigma(t,\bm{x})\|^2_S &\le \overline{M}.
\end{align*}
Then:
\begin{equation}
    \|\bm{Z}_t\|^2_S \le \overline{M} (g_x+T)e^{2\overline{L}T} \quad \mathbb{L}\times\mathbb{P}\text{-a.s., }
\end{equation}
where 
$\mathbb{L}$ is the Lebesgue measure on $[0,T]$.
\end{lem}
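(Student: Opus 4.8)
The plan is to reduce the pathwise bound on $\bm Z_t$ to a bound on the spatial Lipschitz constant of the (deterministic) value function, and then to estimate that constant by a two-point comparison of the forward–backward flow. Concretely, since the forward process in \eqref{lem1-BSDE} is autonomous and the data $(\Sigma,\bm F,\bm g)$ are deterministic, the system is Markovian, so there is a measurable function $\bm u(t,\bx)$ with $\bm Y_t = \bm u(t,\bX_t)$, and the representation theorem for the $\bm Z$-component (cf.\ \cite[Theorem~5.2.2]{zhang2017backward}) yields $\bm Z_t = \Sigma(t,\bX_t)\transpose \nabla_{\bx}\bm u(t,\bX_t)$ for $\mathbb{L}\times\PP$-a.e.\ $(t,\omega)$. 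Hence $\|\bm Z_t\|_S \le \|\Sigma(t,\bX_t)\|_S\,\|\nabla_{\bx}\bm u(t,\bX_t)\|_S \le \sqrt{\overline M}\,\|\nabla_{\bx}\bm u(t,\bX_t)\|_S$, and since the spectral norm of the Jacobian is dominated by the Lipschitz constant of $\bm u(t,\cdot)$, it suffices to show $\mathrm{Lip}(\bm u(t,\cdot))^2 \le (g_x+T)e^{2\overline L T}$ uniformly in $(t,\bx)$.

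Next I would estimate $\mathrm{Lip}(\bm u(t,\cdot))$ by running two copies of \eqref{lem1-BSDE} from initial data $(t,\bx_1)$ and $(t,\bx_2)$ and setting $\delta\bX_s=\bX_s^1-\bX_s^2$, $\delta\bm Y_s$, $\delta\bm Z_s$ for $s\ge t$. The crucial point is that $\delta\bm Y_t = \bm u(t,\bx_1)-\bm u(t,\bx_2)$ is $\mathcal F_t$-measurable, hence deterministic, so an $L^2$-in-$\omega$ estimate on $\delta\bm Y_t$ already delivers a genuine pointwise bound on the increment of $\bm u$, with no need for $\omega$-uniform control. For the forward part, a Gronwall argument using $\|\Sigma(s,\bx_1)-\Sigma(s,\bx_2)\|_F^2\le\overline L|\bx_1-\bx_2|^2$ and the absence of drift gives $\EE|\delta\bX_s|^2\le|\bx_1-\bx_2|^2 e^{\overline L(s-t)}$. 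For the backward part, applying It\^o's formula to $|\delta\bm Y_s|^2$, taking expectations, and inserting $\EE|\delta\bm g|^2\le g_x\,\EE|\delta\bX_T|^2$ together with $|\delta\bm F_s|^2\le\overline L(|\delta\bX_s|^2+\|\delta\bm Z_s\|_F^2)$ produces, after a Gronwall/Young step that absorbs the $\|\delta\bm Z_s\|_F^2$ term, the bound $|\delta\bm Y_t|^2\le(g_x+T)e^{2\overline L T}|\bx_1-\bx_2|^2$. Dividing by $|\bx_1-\bx_2|^2$ and taking the supremum over $\bx_1\ne\bx_2$ bounds $\mathrm{Lip}(\bm u(t,\cdot))^2$, and combining with the first step gives $\|\bm Z_t\|_S^2\le\overline M(g_x+T)e^{2\overline L T}$.

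The main obstacle is twofold. The delicate quantitative point is extracting exactly the exponent $2\overline L$ and the prefactor $(g_x+T)$ from the backward estimate, since the $\|\delta\bm Z_s\|_F^2$ term arising from the $\bm p$-Lipschitz dependence of $\bm F$ must not inflate the exponent. I expect to handle this either by a Girsanov change of measure that linearizes away the $\delta\bm Z$-contribution of the driver (writing $\delta\bm F_s=\beta_s\delta\bm Z_s+\gamma_s$ with $|\beta_s|^2\le\overline L$ and $|\gamma_s|^2\le\overline L|\delta\bX_s|^2$, then absorbing $\beta_s\delta\bm Z_s$ into a shifted Brownian motion), or by a carefully weighted Young inequality, so that only the $\bx$-Lipschitz constants of $\Sigma$ and $\bm F$ feed the exponential and the forward estimate $e^{\overline L(s-t)}$ gets squared into $e^{2\overline L T}$. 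The more conceptual obstacle is justifying the gradient representation and the resulting \emph{a.e.}-pointwise (rather than merely $L^2$) bound when $\bm u$ is not a priori $C^1$; this is precisely the content of \cite[Theorem~5.2.2]{zhang2017backward}, which I would invoke after observing that the Lipschitz regularity just established places us in its hypotheses (alternatively via the variational/Malliavin BSDE together with a mollification argument).
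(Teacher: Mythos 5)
Your proposal is correct and takes essentially the same route as the paper's own proof: a two-point comparison of the forward--backward flow started from $(t_0,\bx_1)$ and $(t_0,\bx_2)$, with precisely your second suggested device --- the weighted Young inequality $2\,\delta\bm F_s\cdot\delta\bm Y_s\le \overline L\,|\delta\bm Y_s|^2+\overline L^{-1}|\delta\bm F_s|^2$, whose weight makes the coefficient of $\|\delta\bm Z_s\|_F^2$ exactly one so it cancels against the It\^o quadratic-variation term --- giving $|\delta\bm Y_{t_0}|^2\le(g_x+T)e^{2\overline LT}|\bx_1-\bx_2|^2$ and hence the Lipschitz bound on $\bm u(t,\cdot)$. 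The paper then concludes exactly as you do via the representation $\bm Z_t=\bigl(\Sigma\transpose\nabla_{\bx}\bm u\bigr)(t,\bX_t)$ (citing Ma--Zhang, Theorem~3.1, where you cite the equivalent result in Zhang's book) together with $\|\Sigma\|_S^2\le\overline M$, so there is no gap.
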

\begin{proof}
Denote by $(\bm{X}^{t,\bm{x}}_s,\bm{Y}^{t,\bm{x}}_s,\bm{Z}^{t,\bm{x}}_s)_{ t \le s \le T}$ the solution of the following BSDE:
\begin{equation}\label{lem1-BSDE2}
    \begin{dcases}
    \bm{X}^{t,\bm{x}}_s = \bm{x} + \int_{t}^{s}\Sigma(u,\bm{X}^{t,\bm{x}}_u)\rmd \bm{W}_u, \\
    \bm{Y}^{t,\bm{x}}_s = \bm{g}(\bm{X}^{t,\bm{x}}_T) + \int_{s}^T \bm{F}(u,\bm{X}^{t,\bm{x}}_u,\bm{Z}^{t,\bm{x}}_u)\rmd u - \int_{t}^T(\bm{Z}^{t,\bm{x}}_u)\transpose \rmd \bm{W}_u,
    \end{dcases}
\end{equation}
for any $(t,\bm{x}) \in [0,T]\times \bR^n$. Then, for any $t_0 \in [0,T]$ and $\bm{x}_1, \bm{x}_2 \in \bR^n$, let $(\bm{X}^j_t,\bm{Y}^j_t,\bm{Z}^j_t)$ be the short notation\footnote{The notation here is slightly inconsistent with the statement in Section~\ref{sec_mathformulation}: A boldface character with a superscript $j$ denotes a solution with the $j^{th}$ initial condition $(t_0, \bx_j)$.}  for $(\bm{X}^{t_0,\bm{x}_j}_t,\bm{Y}^{t_0,\bm{x}_j}_t,\bm{Z}^{t_0,\bm{x}_j}_t)$, $j = 1,2, t\in[t_0,T]$. The wellposedness of BSDEs \eqref{lem1-BSDE} and \eqref{lem1-BSDE2} is classical in literature; see, {\it e.g.}, \cite{PaPe:90}.

We define $\delta \bm{X}_t$, $\delta \bm{Y}_t$, $\delta \bm{Z}_t$, $\delta \bm{F}_t$ and $\delta \bm{\Sigma}_t$ as follows:
\begin{align*}
    &\delta \bm{X}_t = \bm{X}^1_t - \bm{X}^2_t, \quad \delta \bm{Y}_t = \bm{Y}^1_t - \bm{Y}^2_t,\quad \delta \bm{Z}_t = \bm{Z}^1_t - \bm{Z}^2_t, \\
    &\delta  \bm{F}_t = \bm{F}(t,\bm{X}_t^1,\bm{Z}_t^1) - \bm F(t,\bm{X}_t^2,\bm{Z}_t^2),\quad \delta \Sigma_t = \Sigma(t,\bm{X}^1_t) - \Sigma(t,\bm{X}^2_t).
\end{align*}
Then, we have
\begin{align}
    \rmd \delta \bm{X}_t &= \delta \Sigma_t \rmd \bm{W}_t, \\
    \rmd \delta \bm{Y}_t &= - \delta \bm{F}_t \rmd t + (\delta \bm{Z}_t)\transpose  \rmd \bm{W}_t,
\end{align}
and It\^{o}'s lemma gives 
\begin{align}
    \rmd |\delta \bm{X}_t|^2 &= \|\delta \Sigma_t\|_F^2 \rmd t + 2(\delta \bm{X}_t)\transpose  \delta \Sigma_t \rmd \bm{W}_t, \\
    \rmd |\delta \bm{Y}_t|^2 &= \left[-2\delta \bm{F}_t\cdot\delta \bm{Y}_t + \|\delta \bm{Z}_t\|_F^2\right] \rmd t + 2(\delta \bm{Z}_t \delta\bm{Y}_t)\transpose\rmd \bm{W}_t.
\end{align}
Taking the expectation on both sides yields
\begin{equation}
    \EE|\delta \bm{X}_t|^2 = |\bm{x}_1 - \bm{x}_2|^2 + \int_{t_0}^{t}\EE\|\delta \Sigma_s\|_F^2 \rmd s \le |\bm{x}_1-\bm{x}_2|^2 + \overline{L} \int_{t_0}^{t}\EE|\delta \bm{X}_s|^2 \rmd s,
\end{equation}
and by Gr\"onwall's inequality, we have $\EE|\delta \bm{X}_t|^2 \le e^{\overline{L} (t-t_0)}|\bm{x}_1 - \bm{x}_2|^2$. Similarly, we deduce that
\begin{align}
    \EE|\delta \bm{Y}_t|^2 &= \EE|\delta \bm{Y}_T|^2 + \int_{t}^T \EE\left[2\delta \bm{F}_s \cdot \delta \bm{Y}_s - \|\delta \bm{Z}_s\|_F^2\right] \rmd s \notag\\
    &\le g_x \EE|\delta \bm{X}_T|^2 +  \int_{t}^T \left\{\overline{L} \EE|\delta \bm{Y}_s|^2 + \overline{L}^{-1}\EE|\delta \bm{F}_s|^2 - \EE\|\delta \bm{Z}_s\|_F^2 \right\}\rmd s \notag\\
    &\le g_x \EE|\delta \bm{X}_T|^2 +  \int_{t}^T \left\{\overline{L} \EE|\delta \bm{Y}_s|^2 + \overline{L}^{-1}\left[\overline{L} \EE|\delta \bm{X}_s|^2 + \overline{L} \EE\|\delta \bm{Z}_s\|_F^2\right] - \EE\|\delta \bm{Z}_s\|_F^2 \right\}\rmd s \notag\\
    &\le (g_x + T - t_0) e^{\overline{L}(T-t_0)}|\bm{x}_1 - \bm{x}_2|^2 + \overline{L} \int_{t}^{T}\EE|\delta \bm{Y}_s|^2 \rmd s 
\end{align}
and by Gr\"onwall's inequality, we have
\begin{equation}\label{eq:dyto}
|\delta\bm{Y}_{t_0}|^2 \le (g_x + T - t_0)e^{2\overline{L}(T-t_0)}|\bm{x}_1 - \bm{x}_2|^2.
\end{equation}

Following the argument in \cite[Theorem~3.1]{ma2002representation}, we define $\bm u(t,\bx) = \bm{Y}_t^{t,\bx}$ and deduce $|\bm u(t_0,\bm{x}_1) - \bm u(t_0,\bm{x}_2)|^2 \le (g_x+T)e^{2\overline{L}T} |\bm{x}_1 - \bm{x}_2|^2$ from \eqref{eq:dyto}. Therefore, we claim
\begin{equation*}
    \|\nabla_{\bx} \bm u(t,\bx)\|_S^2 \le (g_x+T)e^{2\overline{L}T} \text{ a.s. with the Lebesgue measure on } \bR^n,  \quad \forall t \in [0,T].
\end{equation*}
Also noticing that $\bm{Z}_t = (\Sigma \transpose \nabla_{\bx} \bm u)(t,\bm{X}_t)$ $\mathbb{P}$-a.s. (cf. \cite[Theorem~3.1]{ma2002representation}),
\begin{equation*}
    \|(\Sigma \transpose \nabla_{\bx} \bm u)(t,\bx)\|_S^2 \le \|\Sigma(t,\bx)\|_S^2 \|\nabla_{\bx} \bm u(t,\bx)\|_S^2 \le  \overline{M}(g_x+T)e^{2\overline{L}T},
\end{equation*}
and the law of $\bm{X}_t$ is absolute continuous with respect to the Lebesgue measure on $\bR^n$, we can get
\begin{equation*}
    \|\bm{Z}_t\|_S^2 \le \overline{M} (g_x+T)e^{2\overline{L}T} \quad \mathbb{P}\text{-a.s.}, \quad \forall t \in [0,T].
\end{equation*}
Finally, since $\bm{Z}_t$ is measurable with respect to $\mathcal{L}([0,T])\times \mathcal{F}$, where $\mathcal{L}([0,T])$ denotes the collection of Lebesgue measurable sets on $[0,T]$, we obtain our conclusion.
\end{proof}

Using the above lemma, we can prove that the solution to the BSDE system (\ref{def_BSDE}) has a bounded $Z$-component, for sufficiently small $T$. Note that the standard estimate can not be applied directly here, as the driver $\miniH$ defined in \eqref{def_BSDE} is not global Lipschitz in $\bm p $.

\begin{proof}[Proof of Proposition \ref{prop:assump3_smalltime}]
We first prove the existence. Fix $M_z > 0$, we use $\mathrm{P}_{M_z}(\bZ)$ to denote the projection from $\bR^{k\times N}$ to $\{\bZ \in \bR^{k\times N}: \|\bZ\|_S^2 \le M_z\}$, and use $\mathrm{P}_{M_z}(\bZ)^i$ for its $i^{th}$ column. Let $\Tilde{\bm{H}} = [\Tilde{H}^1, \ldots, \Tilde{H}^N]\transpose$ with $\Tilde{H}^i(t,\bm{x},\bm{p}) = \phi(t,\bm{x},\bm{\alpha}(t,\bm{x},\bm{p}))\cdot\mathrm{P}_{M_z}(\bm{p})^i + f^{i}(t,\bm{x},\bm{\alpha}(t,\bm{x},\bm{p}))$,
then its Lipschitz constants with respect to $\bm x$ and $\bm p$ are computed by 
\begin{align}\label{H_Lipschitz}
    &\quad~|\Tilde{\bm{H}}(t,\bm{x}_1,\bm{p}_1) - \Tilde{\bm{H}}(t,\bm{x}_2,\bm{p}_2)|^2  \notag \\
    &\le 3M_z|\phi(t,\bm{x}_1,\bm{\alpha}(t,\bm{x}_1,\bm{p}_1)) - \phi(t,\bm{x}_2,\bm{\alpha}(t,\bm{x}_2,\bm{p}_2))|^2+3M \|\mathrm{P}_{M_z}(\bm{p}_1) - \mathrm{P}_{M_z}(\bm{p}_2)\|_F^2\notag \\
    &\quad +3|\bm{f}(t,\bm{x}_1,\bm{\alpha}(t,\bm{x}_1,\bm{p}_1))- \bm{f}(t,\bm{x}_2,\bm{\alpha}(t,\bm{x}_2,\bm{p}_2))|^2 \notag \\
    & \le 3(M_z L + L)|\bm{x}_1-\bm{x}_2|^2 + 3M\|\bm{p}_1-\bm{p}_2\|_F^2 +3(M_z L + L)|\bm{\alpha}(t,\bm{x}_1,\bm{p}_1)-\bm{\alpha}(t,\bm{x}_2,\bm{p}_2)|^2 \notag \\
    & \le 3(M_z + 1)(L^2+L)|\bm{x}_1 - \bm{x}_2|^2 + 3[M + (M_z L + 1)L]\|\bm{p}_1 - \bm{p}_2\|_F^2.  
\end{align}
Now define $M_z = 2M (L+1)$  and consider the solution $(\bm{X}_t,\Tilde{\bm{Y}}_t,\Tilde{\bm{Z}}_t)$ to the following BSDE system
\begin{equation}
    \begin{dcases}
    \bm{X}_t = \bm{x}_0 + \int_{0}^{t}\Sigma(s,\bm{X}_s)\rmd \bm{W}_s, \\
    \Tilde{\bm{Y}}_t = \bm{g}(\bm{X}_T) + \int_{t}^T \Tilde{\bm{H}}(s,\bm{X}_s,\Tilde{\bm{Z}}_s)\rmd s - \int_{t}^T(\Tilde{\bm{Z}}_s)\transpose \rmd \bm{W}_s.
    \end{dcases}
\end{equation}
Following Lemma~\ref{lem1}, Assumption \ref{assumption 1 revise} and inequality \eqref{H_Lipschitz} with $g_x = L$, $\overline{L} = 3(2ML+2M+1)(L^2+L)$  and $\overline{M} = M$, we have:
\begin{align}
    \|\Tilde{\bm {Z}}_t\|^2_S &\le M(L +T) e^{6(2ML+2M+1)(L^2+L)T} \\
    & := M(T).
\end{align}
Therefore, if $M(T) \le 2 M(L+1)$,  $(\bm{X}_t,\Tilde{\bm{Y}}_t,\Tilde{\bm{Z}}_t)$ is the desired solution to the BSDE system (\ref{def_BSDE}) with $\|\Tilde{\bm{Z}}_t\|_S^2 \le 2 M(L+1)$, which can be fulfilled if $T$ is small enough.
\end{proof}
\begin{proof}[Proof of Proposition \ref{prop:unique}]
 Let $(\bm{X}_t,\bm{Y}'_t,\bm{Z}'_t)$ be another adapted solution of the BSDE system \eqref{def_BSDE} such that
\begin{equation}
    \EE \left[\sup_{0\le t \le T}(|\bm{X}_t|^2 +|\bm{Y}'_t|^2) + \int_{0}^T \|\bm{Z}'_t\|_F^2 \rmd t\right] < + \infty.
\end{equation}
Define
$\delta \bm{Y}_t = \bm{Y}'_t - \tilde{\bm{Y}}_t$, $\delta \bm{Z}_t = \bm{Z}'_t - \tilde{\bm{Z}}_t$ and $\delta \bm{H}_t = \miniH(t,\bm{X}_t,\bm{Z}'_t) - \miniH(t,\bm{X}_t,\tilde{\bm{Z}_t})$ and we can write 
\begin{equation}\label{thm2_eq1}
    \rmd \delta \bm{Y}_t = - \delta \bm{H}_t\rmd t + \delta \bm{Z}_t \rmd \bm{W}_t.
\end{equation}
It\^{o}'s lemma gives 
\begin{equation}
    \ud|\delta \bm{Y}_t|^2 = [-2\delta \bm{Y}_t \cdot \delta \bm H_t + \|\delta \bm{Z}_t\|_F^2]\rmd t +2 (\delta \bm{Z}_t\delta \bm{Y}_t) \transpose \rmd \bm{W}_t.
\end{equation}
Taking expectation on both side and using $\delta \bm{Y}_T = 0$, we deduce that
\begin{equation}\label{thm2_eq2}
    \EE|\delta \bm{Y}_t|^2 + \int_{t}^{T}\EE\|\delta \bm{Z}_s\|_F^2 \rmd s = 2\int_{t}^{T}\EE[\delta \bm{H}_s \cdot\delta \bm{Y}_s]\rmd s \le \lambda \int_{t}^{T}\EE|\delta \bm{Y}_s|^2\rmd s + \lambda^{-1} \int_{t}^{T}\EE|\delta \bm{H}_s|^2 \rmd s,
\end{equation}
for any $\lambda > 0$.
By
\begin{align}
    \delta \bm{H}_t &= \phi(t,\bm{X}_t,\bm{\alpha}(t,\bm{X}_t,\bm{Z}'_t))\cdot \bm{Z}'_t - \phi(t,\bm{X}_t,\bm{\alpha}(t,\bm{X}_t,\tilde{\bm{Z}}_t)\cdot \tilde{\bm{Z}}_t  \notag \\
    &\quad + \bm{f}(t,\bm{X}_t,\bm{\alpha}(t,\bm{X}_t,\bm{Z}'_t))-\bm{f}(t,\bm{X}_t,\bm{\alpha}(t,\bm{X}_t,\tilde{\bm{Z}}_t))\notag \\
    &=\phi(t,\bm{X}_t,\bm{\alpha}(t,\bm{X}_t,\bm{Z}'_t)) \cdot (\bm{Z}'_t - \tilde{\bm{Z}}_t) + [\phi(t,\bm{X}_t,\bm{\alpha}(t,\bm{X}_t,\bm{Z}'_t)) - \phi(t,\bm{X}_t,\bm{\alpha}(t,\bm{X}_t,\tilde{\bm{Z}}_t)]\cdot \tilde{\bm{Z}}_t\notag \\
    &\quad + \bm{f}(t,\bm{X}_t,\bm{\alpha}(t,\bm{X}_t,\bm{Z}'_t))-\bm{f}(t,\bm{X}_t,\bm{\alpha}(t,\bm{X}_t,\tilde{\bm{Z}}_t)),
\end{align}
we have $|\delta \bm{H}_t|^2 \le L_z \|\delta \bZ_t\|_F^2$ with $L_z = 3[M + M'L^2+ L^2]$. 
Taking $\lambda = L_z$ in \eqref{thm2_eq2}, we deduce that  $\EE|\delta \bm{Y}_t|^2 \le L_z\int_{t}^{T}\EE|\delta \bm{Y}_s|^2 \rmd s$ and therefore $\bm{Y}_t' \equiv \tilde{\bm{Y}}_t$ by Gr\"onwall's inequality. 
We then have $\int_{0}^{T}\EE\|\delta \bm{Z}_t\|_F^2 \rmd t = 0$ from the first equality in \eqref{thm2_eq2}, which implies $\bm{Z}_t' \equiv \tilde{\bm{Z}}_t$.
\end{proof}

\end{document}